\documentclass[11pt]{amsart}
\usepackage{amsfonts}

\newtheorem{thm}{Theorem}[section]
\newtheorem{lem}[thm]{Lemma}
\newtheorem{cor}[thm]{Corollary}
\newtheorem{prop}[thm]{Proposition}

\newtheorem{defprop}[thm]{Definition-Proposition}

\theoremstyle{definition}
\newtheorem{exmp}[thm]{Example}
\newtheorem{defn}[thm]{Definition}

\theoremstyle{remark}
\newtheorem{rem}[thm]{Remark}

\newcommand{\Z}{{\mathbb{Z}}}

\newcommand{\Ext}{\operatorname{Ext}}
\newcommand{\Hom}{\operatorname{Hom}}
\newcommand{\Endhom}{\operatorname{End}}

\newcommand{\add}{\mathrm{add}\; }

\newcommand{\ind}{\mathrm{ind}\text{-} }
\newcommand{\modu}{\mathrm{mod}\text{-} }

\newcommand{\tiltq}{\overrightarrow{\mathcal{T}}}

\begin{document}

\title[A pre-projective part of  tilting quivers]{A pre-projective part of  tilting quivers of certain path algebras}
\author{Ryoichi Kase}
\address{Department of Pure and Applied Mathematics
Graduate School of Information Science and Technology
,Osaka University, Toyonaka, Osaka 560-0043, Japan}
\email{r-kase@cr.math.sci.osaka-u.ac.jp}
\date{}

\maketitle
\footnote[0]{ 
2000 \textit{Mathematics Subject Classification}.
Primary 16G20; Secondary 16D80.
}
\footnote[0]{ 
\textit{Key words and phrases}. 
Tilting modules, representations of quivers.
}
\begin{abstract}
D.Happel and L.Unger defined a partial order on the set of basic tilting modules.
 We study the poset of basic pre-projective tilting modules over path algebra of infinite type.
  First we will give a criterion for Ext-vanishing  for pre-projective modules. And by using this we will give a combinatorial characterization of the poset of basic pre-projective tilting modules. Finally we will see a structure of a pre-projective part of  tilting quivers.
\end{abstract}

\section*{Introduction}
Tilting theory first appeared in the article by Brenner and Butler   \cite{BB}. In this article the notion of a tilting module for finite dimensional algebra was introduced. Tilting theory  now appear  in many areas of mathematics, for example algebraic geometry, theory of algebraic groups and algebraic topology. Let $T$ be a tilting module for finite dimensional algebra $A$ and let $B=\Endhom_{A}(T)$. Then Happel showed that the two bounded derived categories
$\mathcal{D}^{\mathrm{b}}(A)$ and $\mathcal{D}^{\mathrm{b}}(B)$
are equivalent as triangulated category. This is one of the most important result in representation theory of finite dimensional algebras. And so classifying many tilting modules is an important problem.
 
  Theory of tilting-mutation introduced by Riedtmann and Schofield is one of the approach to this problem. Riedtmann and Schofield defined the tilting quiver related with tilting-mutation.  Happel and Unger defined the partial order on the set of basic tilting modules  and showed that tilting quiver is coincided with Hasse-quiver of this poset. And now these combinatorial structure are studied by many authors.

In this paper we use the following notations. Let $A$ be a
 finite dimensional algebra over an algebraically closed
 field $k$, and let $\modu A$ be the category of finite 
 dimensional right $A$-modules. For $M\in \modu A$ we denote by pd$_{A}M$ the projective
 dimension of $M$, and by $\add M$ the full subcategory
 of direct sums of direct summands of $M$. Let $Q=(Q_{0},Q_{1})$ be a finite connected quiver without loops and cycles, 
 and $Q_{0}$ (resp.\;$Q_{1}$) be the set of vertices (resp.\;arrowss) of $Q$. For any $\alpha:x\rightarrow y $ set $s(\alpha):=x $ and $t(\alpha):=y$. 
  We denote
 by $kQ$ the path algebra of $Q$ over $k$.
   For any paths $w:a_{0}\stackrel{\alpha_{1}}{\rightarrow}a_{1}\stackrel{\alpha_{2}}{\rightarrow}\cdots \stackrel{\alpha_{r}}{\rightarrow} a_{r}$ and $w^{'}:b_{0}\stackrel{\beta_{1}}{\rightarrow}b_{1}\stackrel{\beta_{2}}{\rightarrow}\cdots \stackrel{\beta_{s}}{\rightarrow} b_{s}$, \[w\cdot w^{'}:=\left\{\begin{array}{ll}
 a_{0}\stackrel{\alpha_{1}}{\rightarrow}a_{1}\stackrel{\alpha_{2}}{\rightarrow}\cdots \stackrel{\alpha_{r}}{\rightarrow} a_{r}=b_{0}\stackrel{\beta_{1}}{\rightarrow}b_{1}\stackrel{\beta_{2}}{\rightarrow}\cdots \stackrel{\beta_{s}}{\rightarrow} b_{s} & \mathrm{if\ }a_{r}=b_{0} \\ 
 0 & \mathrm{if\ }a_{r}\neq b_{0},
 \end{array}\right.\]
 in $kQ$. For any module $M\in \modu kQ$ we denote by $|M|$ the number of
 pairwise non isomorphic indecomposable direct summands of $M$.

We denote by $\overrightarrow{\mathcal{T}}(Q)$ the tilting quiver over $Q$\;(see Definition\;\ref{dtiltq} ).
In this paper we will see a structure of $\overrightarrow{\mathcal{T}}_{\mathrm{p}}(Q)$ the pre-projective part of $\overrightarrow{\mathcal{T}}(Q)$\;(i.e.
the full sub-quiver of $\overrightarrow{\mathcal{T}}(Q)$ having pre-projective basic tilting modules as the set of vertices) 
when $Q$ satisfies some conditions (see Section\;2).  
 Now a module $M\in \modu A$  is pre-projective if there exists some projective module 
 $P$ and non-negative integer $r$ s.t.\;$M\simeq \tau^{-r}P$, where $\tau$ is 
the Auslander-Reiten translation.

We give an outline of this paper. 
In Section\;1, following\cite{HU1},\cite{HU2},\cite{HU3},\cite{HU4}, we recall some definitions and properties used in this paper.
In section\;2 we give our main Theorem. In Section\;3 
we give a criterion for Ext-vanishing for pre-projective modules. 
More precisely we introduce a function $l_{Q}$ from $Q_{0}\times Q_{0}$ to $\Z_{\geq 0}$ 
such that $\Ext_{kQ}^{1}(\tau^{-r_{i}}P(i),\tau^{-r_{j}}P(j))=0$ 
if and only if $r_{i}\leq r_{j}+l_{Q}(j,i)$ and by using this fact we prove our main Theorem.
In Section\;4 we will see a structure of $\overrightarrow{\mathcal{T}}_{\mathrm{p}}(Q)$ in the case $l(Q):=\mathrm{max}\{l_{Q}(x,y)\mid x,y\in Q_{0}\}\leq 1$.  

In this paper we identify two quivers $Q$ and $Q^{'}$ if $Q$ is isomorphic to $Q^{'}$ as a quiver.  
   
\section{Preliminaries}

In this section, following \cite{HU1},\cite{HU2},\cite{HU3} and \cite{HU4}, we will recall the definition of tilting module and basic results for combinatorics of the set of tilting modules.

\begin{defn}
A module  $T\in \modu kQ$ is tilting module if,\\
$(1)$\;$\Ext^{1}_{kQ}(T,T)=0$,\\
$(2)$\;$|T|=\#Q_{0}.$ 
 
\end{defn}

\begin{rem}
Generally we call a module $T$ over a finite dimensional algebra $A$ a tilting module if (1)\;its projective dimension
is at most $1$, (2)\;$\Ext_{A}^{1}(T,T)=0$ and (3)\;there is a exact sequence,
\[0\rightarrow A_{A}\rightarrow T_{0}\rightarrow T_{1}\rightarrow 0,\]
with $T_{i}\in \add T$. In the case $A$ is hereditary it is well-known that this definition is equivalent to our definition. 
 
\end{rem}

We denote by $\mathcal{T}(Q)$ the set of (isomorphism classes of) basic tilting modules in $\modu kQ$.

\begin{defprop}\cite[Lemma\;2.1]{HU2} Let $T,T^{'}\in \mathcal{T}(Q)$. Then the following relation $\leq$ define  a partial order on $\mathcal{T}(Q)$,
\[T\geq T^{'}\stackrel{\mathrm{def}}{\Leftrightarrow}\Ext^{1}_{kQ}(T,T^{'})=0.\]
\end{defprop}

\begin{defn}
\label{dtiltq}
The tilting quiver $\tiltq (Q)$ is defined as follows,\\
(1)\;the set of vertices $\tiltq (Q)_{0}:=\mathcal{T}(Q)$,\\
(2)\;$T\rightarrow T^{'}$ in $\tiltq (Q)$ if  $T\simeq M\oplus X$
, $T^{'}\simeq M\oplus Y$ for some $X,Y\in \ind kQ$, $M\in \modu kQ$ and there is a non split exact sequence,
\[0\rightarrow X \rightarrow M^{'}\rightarrow Y\rightarrow 0, \]
with $M^{'}\in \add M$.

\end{defn}

The following results give interesting properties of tilting quivers.

\begin{thm}\cite[Theorem\;2.1]{HU1}
The tilting quiver $\tiltq (Q)$ is coincided with the $\mathrm{Hasse}$-quiver
of $(\mathcal{T}(Q),\leq)$.
\end{thm}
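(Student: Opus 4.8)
The plan is to verify that the two quivers, which have the common vertex set $\mathcal{T}(Q)$, have the same arrows. Since the Hasse quiver of $(\mathcal{T}(Q),\leq)$ has an arrow $T\to T'$ exactly when $T$ covers $T'$ (that is, $T>T'$ with no tilting module strictly in between), it suffices to prove (a) every arrow of $\tiltq(Q)$ is a covering relation, and (b) every covering relation is an arrow of $\tiltq(Q)$. I would begin with the concrete part. Let $T\to T'$ be an arrow, so $T\simeq M\oplus X$, $T'\simeq M\oplus Y$ with a non-split exact sequence $0\to X\to M'\to Y\to 0$, $M'\in\add M$. Using that $kQ$ is hereditary (so $\Ext^{2}_{kQ}=0$), applying $\Hom_{kQ}(X,-)$ to this sequence gives $\Ext^{1}_{kQ}(X,Y)=0$ out of $\Ext^{1}_{kQ}(X,M')=0$; combined with the Ext-vanishing built into ``$T$ and $T'$ are tilting'' this yields $\Ext^{1}_{kQ}(T,T')=0$, i.e.\ $T\geq T'$. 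Dually, applying $\Hom_{kQ}(-,X)$ shows $\Ext^{1}_{kQ}(Y,X)\cong\operatorname{coker}(\Hom_{kQ}(M',X)\to\Hom_{kQ}(X,X))$, which is non-zero because a retraction $M'\to X$ of $X\to M'$ is excluded by non-splitness; hence $\Ext^{1}_{kQ}(T',T)\neq 0$ and $T>T'$ strictly.

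To upgrade $T>T'$ to a covering relation, suppose $Z\in\mathcal{T}(Q)$ satisfies $T\geq Z\geq T'$, and I claim $M\in\add Z$. Indeed, for each indecomposable summand $M_{i}$ of $M$ one has $M_{i}\in\add T\cap\add T'$, so $\Ext^{1}_{kQ}(M_{i},Z)=0$ (from $T\geq Z$) and $\Ext^{1}_{kQ}(Z,M_{i})=0$ (from $Z\geq T'$); thus $Z\oplus M_{i}$ is rigid, and since a rigid module over $kQ$ has at most $\#Q_{0}=|Z|$ non-isomorphic indecomposable summands, $M_{i}\in\add Z$. Hence $Z\simeq M\oplus W$ with $W$ indecomposable, i.e.\ $W$ is a complement of the almost complete tilting module $M$. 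Invoking the theorem that an almost complete tilting module over a hereditary algebra has at most two complements (here $X$ and $Y$), we get $W\in\{X,Y\}$ and so $Z\in\{T,T'\}$. This proves (a).

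For (b) the key is a descent lemma: if $T>T'$ strictly then there is an arrow $T\to T_{1}$ in $\tiltq(Q)$ with $T_{1}\geq T'$. Granting this, if $T$ covers $T'$ then such a $T_{1}$ satisfies $T>T_{1}\geq T'$ (the strictness coming from the first paragraph), and the covering property forces $T_{1}=T'$, so $T\to T'$ is an arrow. To prove the descent lemma I would start from $\Ext^{1}_{kQ}(T',T)\neq 0$ and pick an indecomposable summand $X$ of $T$ with $\Ext^{1}_{kQ}(T',X)\neq 0$; writing $T=M\oplus X$, the module $M$ is almost complete tilting, and I would produce its second complement $Y$ together with an exchange sequence $0\to X\to M'\to Y\to 0$ of the required (downward) form. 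Setting $T_{1}=M\oplus Y$, applying $\Hom_{kQ}(-,T')$ to the exchange sequence and using $\Ext^{1}_{kQ}(M',T')=0$ gives $\Ext^{1}_{kQ}(Y,T')=0$, whence $T_{1}\geq T'$; and $T_{1}\neq T$ since $Y\neq X$.

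The main obstacle is precisely the construction isolated in the previous paragraph: showing that the summand $X$ selected via $\Ext^{1}_{kQ}(T',X)\neq 0$ can actually be exchanged, that $M=T/X$ genuinely admits a second complement $Y$, and that the resulting exchange sequence has the downward orientation $0\to X\to M'\to Y\to 0$ rather than the reverse. This rests on the structure theory of complements of almost complete tilting modules (Riedtmann--Schofield and Happel--Unger), including the criterion distinguishing the one-complement from the two-complement case; the Ext-condition on $X$ is what guarantees that we are in the two-complement situation and on the correct side of the exchange. The remaining ingredients---heredity, the rigidity bound $|R|\leq\#Q_{0}$, and the at-most-two-complements theorem---are standard and feed cleanly into the earlier steps.
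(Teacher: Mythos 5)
The paper itself gives no proof of this statement: it is imported verbatim from Happel--Unger \cite{HU1}, so there is no internal argument to compare yours against, and your proposal has to stand on its own. Its first half does. Part (a) --- every arrow of $\tiltq(Q)$ is a covering relation --- is correct and complete modulo standard facts: the two long-exact-sequence computations (using heredity), the bound $|R|\leq \#Q_{0}$ for rigid $R$, and the at-most-two-complements theorem for almost complete tilting modules are all applied correctly, and the argument that any intermediate tilting module $Z$ with $T\geq Z\geq T'$ must contain $M$, hence be $M\oplus W$ with $W$ a complement of $M$, hence equal $T$ or $T'$, is exactly right.

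The genuine gap is in your descent lemma, and it is not a technicality you can delegate to ``structure theory.'' You claim that choosing a summand $X$ of $T$ with $\Ext^{1}_{kQ}(T',X)\neq 0$ ``guarantees that we are in the two-complement situation and on the correct side of the exchange.'' It does not. Take $Q$ the Kronecker quiver, $T=kQ=P(1)\oplus P(2)$ with $P(2)$ the simple projective, and $T'=\tau^{-1}P(1)\oplus\tau^{-1}P(2)$. Then $T>T'$, and by the Auslander--Reiten formula $\Ext^{1}_{kQ}(T',P(i))\simeq D\Hom_{kQ}(P(i),P(1)\oplus P(2))\neq 0$ for \emph{both} $i$; yet the choice $X=P(1)$ leaves $M=P(2)$, which is not faithful and has $P(1)$ as its unique complement, so no second complement $Y$ and no exchange sequence exist and the construction halts (only the choice $X=P(2)$ works here). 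Producing a summand for which the exchange does exist, goes downward, and stays above $T'$ is precisely the nontrivial content of Happel--Unger's theorem; the Ext-condition alone cannot do it. There is a second leak in the same paragraph: from $0\to X\to M'\to Y\to 0$ and $\Ext^{1}_{kQ}(M',T')=0$, the long exact sequence yields only $\Ext^{1}_{kQ}(Y,T')\simeq\operatorname{coker}\bigl(\Hom_{kQ}(M',T')\to\Hom_{kQ}(X,T')\bigr)$, not its vanishing; you would still need every map $X\to T'$ to factor through the approximation $X\to M'$. Indeed, your own part (a) applies this same sequence, with $T'$ replaced by $X$, exactly to show that such a cokernel can be nonzero.
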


\begin{rem}
In this paper we define the Hasse-quiver $\overrightarrow{P}$ of  (finite or infinite)  poset $(P,\leq)$ as follows,\\
(1)\;the set of vertices $\overrightarrow{P}_{0}:=P$,\\
(2)\;$x\rightarrow y$ in $\overrightarrow{P}$ if $x>y$ and there is no $z\in P$ such that $x>z>y$.

\end{rem}

\begin{prop}\cite[Corollary\;2.2]{HU1} If $\tiltq (Q)$ has a finite component $\mathcal{C}$,
 then $\tiltq (Q)=\mathcal{C}$.
\end{prop}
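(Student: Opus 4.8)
The plan is to translate the statement into the order theory of the poset $(\mathcal{T}(Q),\leq)$, using Theorem~2.1 to identify $\tiltq(Q)$ with the Hasse quiver. The starting observation is that $(\mathcal{T}(Q),\leq)$ has a unique maximal element and a unique minimal element: since $kQ$ (the direct sum of the indecomposable projectives) is projective, $\Ext^{1}_{kQ}(kQ,T')=0$ for every $T'$, so $kQ\geq T'$ for all $T'$, and maximality forces any maximal element to equal $kQ$; dually $D(kQ)$ (the direct sum of the indecomposable injectives) is injective, so $T'\geq D(kQ)$ for all $T'$, making it the unique minimum. In Hasse-quiver terms, $kQ$ is the unique source and $D(kQ)$ the unique sink. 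I would also record two structural inputs: the Hasse quiver is acyclic (covering relations cannot form a cycle), and it is locally finite, since each almost-complete tilting module has at most two complements, so each vertex of $\tiltq(Q)$ has at most $\#Q_{0}$ neighbours.

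Next I would use the existence of mutations: if $T\neq kQ$ then $T<kQ$, and by the standard upward-exchange result there is a tilting module covering $T$, i.e.\ an arrow into $T$; dually, every $T\neq D(kQ)$ has an outgoing arrow. Now let $\mathcal{C}$ be a finite component. Being a finite acyclic quiver, $\mathcal{C}$ possesses at least one source and at least one sink. A source of $\mathcal{C}$ has no incoming arrow in all of $\tiltq(Q)$ (arrows stay within components), so by the upward-exchange statement it must be $kQ$; dually a sink must be $D(kQ)$. Hence $kQ\in\mathcal{C}$ and $D(kQ)\in\mathcal{C}$. Since these extremal elements are unique, no other component can contain a source or a sink, so any hypothetical further component would have to be infinite with every vertex admitting both an incoming and an outgoing arrow.

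It remains to show $\mathcal{C}=\tiltq(Q)$, and this is the step I expect to be the main obstacle. The approach I would take is a saturation argument: let $R$ be the set of tilting modules reachable from $kQ$ by directed paths. Then $R\subseteq\mathcal{C}$ is finite, and $R$ is closed under passing to immediate successors (downward covers). A minimal element of $R$ admits no downward cover, hence by the downward-exchange statement is minimal in all of $\mathcal{T}(Q)$, so it equals $D(kQ)$; thus $R$ has $D(kQ)$ as its unique minimal element. The crux is then to prove that $R$ exhausts $\mathcal{T}(Q)$, equivalently that for every tilting module $T$ the interval $[T,kQ]$ is finite: once an interval $[T,kQ]$ is finite it is a finite poset with unique maximum $kQ$, so its Hasse quiver is connected and, being a full subquiver of $\tiltq(Q)$ meeting $\mathcal{C}$ at $kQ$, lies entirely in $\mathcal{C}$, forcing $T\in\mathcal{C}$. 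Establishing this finiteness is exactly where the hereditary structure must be exploited — I would combine local finiteness with the $\Ext$-vanishing criterion developed later in the paper to bound, for each $T$, the tilting modules lying between $T$ and $kQ$, thereby ruling out the escape of any tilting module from the finite component $\mathcal{C}$ and concluding $\tiltq(Q)=\mathcal{C}$.
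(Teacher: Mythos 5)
First, a point of calibration: the paper does not prove this proposition at all — it is quoted verbatim from \cite{HU1} (Corollary 2.2) — so your proposal can only be compared with the standard Happel--Unger argument. The first half of what you write is sound and matches that argument in spirit: $kQ$ is the unique maximum and $D(kQ)$ the unique minimum of $(\mathcal{T}(Q),\leq)$, the tilting quiver is acyclic and locally finite, and, granting the exchange result that every $T\neq kQ$ has an incoming arrow and every $T\neq D(kQ)$ an outgoing one, a finite component $\mathcal{C}$ must contain a source and a sink, which are then forced to be $kQ$ and $D(kQ)$.

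The genuine gap is the last step, and it is not a technicality but the heart of the proposition; you flag it yourself as ``the crux'' but the route you propose for it would fail. You reduce the statement to the claim that every interval $[T,kQ]$ is finite and propose to prove this by ``combining local finiteness with the Ext-vanishing criterion developed later in the paper.'' Two objections. (1) That criterion (Proposition~\ref{p}) applies only to preprojective modules over quivers satisfying condition (b) of Theorem~\ref{t}; the present proposition concerns arbitrary connected quivers and arbitrary tilting modules, so the tool is simply not available here. (2) More fundamentally, interval finiteness cannot be established unconditionally: for any representation-infinite $Q$ the entire infinite poset $\mathcal{T}(Q)$ \emph{is} the interval $[D(kQ),kQ]$. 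Hence any correct proof of your finiteness claim must use the hypothesis that a finite component exists, and your sketch offers no mechanism by which that hypothesis would enter. There is also a subsidiary error in the reduction itself: the Hasse quiver of the sub-poset $[T,kQ]$ is \emph{not} a full subquiver of $\tiltq(Q)$, since a covering relation inside $[T,kQ]$ need not be a covering relation in $\mathcal{T}(Q)$ (the intermediate module may fail to be $\geq T$); the reduction can instead be saved by noting that if $T\notin\mathcal{C}$ then $T$'s component has no source, so one can build an infinite strictly ascending chain of covers above $T$, all lying in $[T,kQ]$. What actually closes the argument in \cite{HU1} is a completion lemma you never state: if $T'>T$ then there exists an arrow $T'\to T''$ with $T''\geq T$. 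Starting from $kQ\in\mathcal{C}$ and iterating, the resulting strictly decreasing directed path stays inside the finite component $\mathcal{C}$ and stays $\geq T$, hence terminates, and by the lemma it can only terminate at $T$ itself; thus every tilting module lies in $\mathcal{C}$.
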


\begin{thm}
 \label{HU-thm}
\cite[Theorem\;6.4]{HU3}
 If $Q$ is connected and has no multiple arrows, then $Q$ is uniquely determined by $(\mathcal{T}(Q),\leq )$. 
\end{thm}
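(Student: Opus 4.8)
The plan is to give an explicit procedure that reads $Q$ off the abstract poset $(\mathcal{T}(Q),\leq)$, and then to verify that order-isomorphic posets force isomorphic quivers. The starting point is that $kQ=\bigoplus_{i\in Q_0}P(i)$ is projective, so $\Ext^1_{kQ}(kQ,T')=0$ for every $T'$; hence $kQ$ is the unique maximum of the poset and is recovered purely order-theoretically. Using the identification of $\tiltq(Q)$ with the Hasse quiver of $(\mathcal{T}(Q),\leq)$, the arrows leaving $kQ$ are exactly its lower covers, and each corresponds to a single mutation, i.e. to replacing one summand $P(i)$ by a new indecomposable $Y_i$ via an exchange sequence $0\to P(i)\to E_i\to Y_i\to 0$ with $E_i\in\add(kQ/P(i))$. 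First I would record which summands are exchangeable: by the Happel--Unger theory of complements, the almost complete tilting module $kQ/P(i)$ admits a second complement precisely when it is sincere, and $\bigoplus_{j\neq i}P(j)$ is sincere if and only if $S(i)$ is a composition factor of some $P(j)$ with $j\neq i$, i.e. if and only if $i$ is not a source of $Q$. Thus the non-source vertices are in bijection with the lower covers of the maximum, and the sources appear as the ``missing'' mutations at the top.

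Next I would recover the underlying graph from the local structure just below $kQ$. For two non-source vertices $i\neq j$ the lower covers $T_i$ and $T_j$ can each be mutated again, and the key point is that these two mutations commute---producing a common lower cover $T_{ij}$ with $T_i\to T_{ij}\leftarrow T_j$---exactly when there is no $\Ext$ interaction between the newly exchanged summands, which is governed by whether $i$ and $j$ are adjacent in $Q$. An edge between $i$ and $j$ obstructs commutation and is visible as the failure of $T_i,T_j$ to possess such a common lower cover (the meet being forced one further step down). Reading this commuting/non-commuting pattern off the poset reconstructs every edge joining two non-source vertices. Since the hypothesis forbids multiple arrows, adjacency is the only graph datum to be recovered and no multiplicities must be disentangled.

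The remaining, and hardest, task is twofold: recovering the edges incident to a source, and fixing the orientation of every edge. Edges at a source $i$ are invisible from the top, because $P(i)$ is not exchangeable there, so I would pass to a reduction: using the Happel--Unger correspondence between order filters (resp. intervals) of $\mathcal{T}(Q)$ and the tilting posets of the endomorphism algebras of APR- and co-APR-tilts, one reflects a source into a sink and locates a sub-poset isomorphic to $\mathcal{T}(\sigma_iQ)$, in which the former source has become exchangeable; the edges at $i$ are then read off inside that sub-poset, and an induction on the number of sources closes the reconstruction. Orientation is handled in the same spirit, by separating source-like behaviour (a non-exchangeable summand at the maximum) from the dual sink-like behaviour detected through the co-APR reflection, so that iterating the reflections orients each edge consistently. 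The main obstacle is precisely this recursive step: one must prove that the relevant filters and intervals are genuinely isomorphic to the tilting posets of the reflected quivers, and that the whole reconstruction is independent of all choices, so that any two quivers with order-isomorphic tilting posets are forced to be isomorphic. Establishing this well-definedness, together with the commuting-mutation criterion for adjacency, is where the real work lies.
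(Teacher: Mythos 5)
You should first be aware that the paper contains no proof of this statement: Theorem~\ref{HU-thm} is quoted verbatim from \cite[Theorem~6.4]{HU3} as background, and the published proof there is a long and delicate reconstruction argument, not something the present paper reproduces. Measured against that, your text is a strategy outline rather than a proof, as you concede yourself in the final paragraph. Your opening moves are sound and standard: $kQ$ is the unique maximum since $\Ext^{1}_{kQ}(kQ,-)=0$; by the identification of $\tiltq(Q)$ with the Hasse quiver, the covers of $kQ$ are the mutations of $kQ$; and over a hereditary algebra an almost complete tilting module has a second complement if and only if it is sincere, so $P(i)$ is exchangeable at the top exactly when $i$ is not a source. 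But the two steps that actually carry the theorem are asserted, not proved, and one of them is false in the form you invoke it.

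First, the criterion ``mutations at $i$ and $j$ admit a common lower cover if and only if $i$ and $j$ are non-adjacent'' is plausible (it is related to Happel--Unger's analysis of polygons in the tilting quiver), but it requires a genuine local argument, and before you can even apply it you must recover, purely order-theoretically, the number $n$ of vertices and the set of sources: ``missing mutations at the top'' cannot be counted in an abstract poset without already knowing $n$ (for $Q$ of type $A_{2}$ the poset is a two-element chain, and nothing in it announces that one mutation is absent). Second, and more seriously, the ``Happel--Unger correspondence between order filters (resp.\ intervals) of $\mathcal{T}(Q)$ and the tilting posets of the endomorphism algebras of APR- and co-APR-tilts,'' on which your whole induction on sources rests, does not exist in the generality you need. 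What is true is that for a tilting module $T$ with $B=\Endhom_{kQ}(T)$ the functor $\Hom_{kQ}(T,-)$ gives an order embedding of the principal ideal $\{T'\in\mathcal{T}(Q)\mid T'\leq T\}$ into $\mathcal{T}(B)$, but this embedding is in general not surjective. Already for $Q\colon a\rightarrow b$ and the APR-tilt $T=P(a)\oplus\tau^{-1}P(b)$ one has $B\simeq kA_{2}$, so $\mathcal{T}(B)$ is a two-element chain, while $\{T'\mid T'\leq T\}=\{T\}$ is a single point; the analogous filter statement fails as well (e.g.\ for $A_{3}$ with linear orientation, the filter above the APR-tilt has two elements, while the reflected quiver has more tilting modules). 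So one cannot simply ``locate a sub-poset isomorphic to $\mathcal{T}(\sigma_{i}Q)$'' and induct; constructing a reduction that actually works, and proving it independent of all choices, is precisely the hard content of \cite{HU3}, which your sketch defers entirely.
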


Let $M$ be a basic partial tilting module and $\mathrm{lk}(M):=\{T\in\mathcal{T}\mid M\in \add T\}$. Then we denote by 
$\overrightarrow{\mathrm{lk}}(M)$ the full sub-quiver of $\overrightarrow{\mathcal{T}}(Q)$
having $\mathrm{lk}(M)$ as the set of vertices (see \cite{HU4}).
\begin{prop}
\cite[Theorem\;4.1]{HU4}
If $M$ is faithful, then $\overrightarrow{\mathrm{lk}}(M)$ is connected.
\end{prop}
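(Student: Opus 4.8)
The plan is to exhibit a unique maximal vertex of $\overrightarrow{\mathrm{lk}}(M)$ and to show that every other vertex is joined to it by a directed path whose arrows all lie in $\overrightarrow{\mathrm{lk}}(M)$. First I would produce the maximal vertex by Bongartz completion: since $M$ is a basic partial tilting module there is a tilting module $T^{+}$ with $M\in\add T^{+}$ whose associated torsion class is $\mathrm{Gen}(T^{+})=\{Y\in\modu kQ\mid \Ext^{1}_{kQ}(M,Y)=0\}$. For any $T\in\mathrm{lk}(M)$ we have $\Ext^{1}_{kQ}(M,T)=0$ (both are summands of the tilting module $T$), so $T\in\mathrm{Gen}(T^{+})$ and hence $\Ext^{1}_{kQ}(T^{+},T)=0$, i.e. $T^{+}\geq T$. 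Thus $T^{+}$ is the maximum of $(\mathrm{lk}(M),\leq)$.

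The role of the faithfulness hypothesis enters next, and this is the step I would stress. Because $M$ is faithful, every module $N$ with $M\in\add N$ is again faithful, since $\operatorname{ann}(N)\subseteq\operatorname{ann}(M)=0$. Now fix $T=M\oplus X_{1}\oplus\cdots\oplus X_{c}\in\mathrm{lk}(M)$, where $X_{1},\dots,X_{c}$ are the indecomposable summands of $T$ not in $\add M$. For each $i$ the almost complete tilting module $M\oplus\bigoplus_{j\neq i}X_{j}$ contains $M$, hence is faithful, hence has exactly two complements by the standard theorem on complements of faithful almost complete tilting modules. Consequently the mutation of $T$ at the free summand $X_{i}$ produces a second tilting module, again containing $M$, i.e. a vertex of $\overrightarrow{\mathrm{lk}}(M)$; and by the identification of $\tiltq(Q)$ with the Hasse-quiver of $(\mathcal{T}(Q),\leq)$ this mutation is an arrow. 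So faithfulness guarantees that \emph{all} $c$ free-summand mutations at every $T\in\mathrm{lk}(M)$ are available and remain inside $\overrightarrow{\mathrm{lk}}(M)$.

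I would then climb to $T^{+}$. If $T\neq T^{+}$ then $T<T^{+}$, so $\Ext^{1}_{kQ}(T,T^{+})\neq 0$ by antisymmetry of the order, and there is an indecomposable summand $Z$ of $T^{+}$ with $\Ext^{1}_{kQ}(T,Z)\neq 0$. Since $M\in\add T^{+}$ while $\Ext^{1}_{kQ}(T,M)=0$, the summand $Z$ is free and $Z\notin\add T$. A source (universal-extension) mutation of $T$ directed at $Z$ exchanges a free summand and yields $\tilde T$ with $T<\tilde T\leq T^{+}$; by the previous paragraph $\tilde T\in\mathrm{lk}(M)$ and $\tilde T\to T$ is an arrow of $\overrightarrow{\mathrm{lk}}(M)$. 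Iterating gives a strictly increasing chain of arrows ending at $T^{+}$, so $T$ is connected to $T^{+}$ in $\overrightarrow{\mathrm{lk}}(M)$; as this holds for every vertex, the quiver is connected.

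The main obstacle I anticipate is the termination of this ascent when $\tiltq(Q)$ is infinite: one must verify that $T^{+}$ is reached after finitely many mutations, equivalently that the interval $\{S\in\mathcal{T}(Q)\mid T\leq S\leq T^{+}\}$ is finite and that each step is a genuine covering mutation staying inside $\mathrm{lk}(M)$. This is exactly where faithfulness is indispensable, since it is what makes every intermediate free mutation realizable; I would control the length of the ascent by an additive length (or dimension-vector) function that strictly increases along the chain, or by establishing finiteness of the interval $[T,T^{+}]$ directly, and this finiteness estimate is the part of the argument I expect to require the most care.
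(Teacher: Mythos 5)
First, a point of order: the paper does not prove this proposition at all --- it is quoted from \cite[Theorem 4.1]{HU4} --- so your attempt can only be compared with the argument in that reference (and with the way the present paper uses it). That said, your opening two paragraphs are correct and are indeed the right frame: the Bongartz completion $T^{+}$ of $M$ satisfies $\mathrm{Gen}(T^{+})=\{Y\mid \Ext^{1}_{kQ}(M,Y)=0\}$ and is therefore the unique maximal element of $(\mathrm{lk}(M),\leq)$; faithfulness passes to every module having $M$ as a direct summand, so removing from any $T\in\mathrm{lk}(M)$ a summand outside $\add M$ leaves a \emph{faithful} almost complete tilting module, which has exactly two complements; hence all such exchanges exist and stay inside $\mathrm{lk}(M)$.

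The two steps that would actually force connectedness are, however, missing, and they are exactly where the content of the theorem lies. (i) The sentence ``a source (universal-extension) mutation of $T$ directed at $Z$ exchanges a free summand and yields $\tilde T$ with $T<\tilde T\leq T^{+}$'' is not a construction: you never identify which indecomposable summand of $T$ is exchanged, nor prove that the exchange goes upward, nor that the result stays below $T^{+}$. Knowing that some summand $Z$ of $T^{+}$ has $\Ext^{1}_{kQ}(T,Z)\neq 0$ does not single out a mutation of $T$; converting this Ext-nonvanishing into an exchange with the required inequalities is precisely the complement/exchange machinery of \cite{CHU}, \cite{HU1}, \cite{HU2}, which you assume rather than prove. (The route used in the literature, and implicitly in this paper's proof of Theorem~\ref{t}(4), goes the other way: one \emph{descends} from the top, choosing a summand $X$ of $T'>T$ with $\Ext^{1}_{kQ}(T,X)\neq 0$; then $X\notin\add M$ because $\Ext^{1}_{kQ}(T,M)=0$, faithfulness supplies the second complement of $T'/X$, and one still has to prove that the resulting arrow $T'\rightarrow T''$ satisfies $T''\geq T$.) (ii) Termination, which you flag yourself, is a genuine unresolved gap rather than a technicality: intervals in $(\mathcal{T}(Q),\leq)$ can be infinite for representation-infinite $Q$ --- for the Kronecker quiver every preprojective tilting module lies strictly between the preinjective tilting module $I_{2}\oplus I_{1}$ and $kQ$ --- so finiteness of the climb, equivalently of $[T,T^{+}]\cap\mathrm{lk}(M)$, must genuinely exploit that all terms contain the faithful module $M$, and no such argument (nor any strictly monotone bounded invariant) is exhibited. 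Note that in the present paper the analogous difficulty is settled by an explicit cardinality bound, $\#\mathrm{lk}_{\mathrm{p}}(P(s))\leq 2^{n-1}$ (Theorem~\ref{t}(3)), \emph{before} connectivity is deduced in Theorem~\ref{t}(4); your proposal has no substitute for it. In short: the setup is right, but both the monotone exchange step and the finiteness of the ascent remain unproved.
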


\section{Main result}
In this section we give our main result for a structure of $\tiltq_{\mathrm{p}}(Q)$.
Let $Q$ be a quiver with $n$ vertices.  For any $x\in Q_{0}$
we denote by $P(x)$ an indecomposable projective module associated with $x$. 
We denote by $\mathcal{T}_{\mathrm{p}}(Q)$ the set of basic pre-projective tilting modules, 
and for a basic pre-projective partial tilting module $M$ define $\mathrm{lk}_{\mathrm{p}}(M)$, $\overrightarrow{\mathrm{lk}}_{\mathrm{p}}(M)$ similarly.
\begin{thm}
\label{t}
Assume that $Q$ satisfies the following conditions $(\mathrm{a})$ and $(\mathrm{b})$\\
$(\mathrm{a})\;Q$ has a unique source $s\in Q_{0}$.\\
$(\mathrm{b})\;$For any $x\in Q_{0}$, $\#s(x)+\#t(x)>1$.

Then the following assertions hold,\\
$(1)$\;$\mathcal{T}_{\mathrm{p}}(Q)$ is a disjoint union of $\mathrm{lk}_{\mathrm{p}}(\tau^{-r}P(s))$ for all $r\geq 0$.\\
$(2)$\;$\tau^{-r}$ gives a quiver isomorphism \[\tau^{-r}:\overrightarrow{\mathrm{lk}}_{\mathrm{p}}(P(s))\simeq \overrightarrow{\mathrm{lk}}_{\mathrm{p}}(\tau^{-r}P(s)).\]\\
$(3)\;\#\mathrm{lk}_{\mathrm{p}}(P(s))\leq 2^{n-1}$.\\
$(4)\;\overrightarrow{\mathrm{lk}}_{\mathrm{p}}(P(s))$ is a connected quiver,\\
$(5)$ Let $T\in \mathrm{lk}_{\mathrm{p}}(\tau^{-r}P(s))$ and $T^{'}\in \mathrm{lk}_{\mathrm{p}}(\tau^{-r^{'}}P(s))$. If there is an arrow $T\rightarrow T^{'}$, then $r^{'}-r$ is $0$ or $1$.\\
$(6)\;\overrightarrow{\mathcal{T}}_{\mathrm{p}}(Q)$ is a connected quiver. 
\end{thm}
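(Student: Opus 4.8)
The plan is to translate everything into the combinatorics of the Ext-vanishing criterion announced in Section~3 and then read the six assertions off the resulting poset. Since $kQ$ is of infinite representation type (condition $(\mathrm{b})$ forces the underlying graph to be non-Dynkin), the preprojective component is standard, every indecomposable preprojective is uniquely $\tau^{-r}P(x)$, and a basic preprojective tilting module $T$ contains exactly one summand from each $\tau$-orbit; thus $T$ is recorded by a tuple $(r_x)_{x\in Q_0}\in\Z_{\ge 0}^{Q_0}$ with $T=\bigoplus_x\tau^{-r_x}P(x)$. By the criterion, $T$ is tilting iff $r_x\le r_y+l_Q(y,x)$ for all $x,y$. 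Using $l_Q(x,x)=0$ one checks that $T\ge T'$ (i.e. $\Ext^1(T,T')=0$) is equivalent to $r^T\le r^{T'}$ coordinatewise, and that the feasible set is closed under coordinatewise $\min$ and $\max$; hence $(\mathcal T_{\mathrm p}(Q),\le)$ is anti-isomorphic, as a poset, to a distributive lattice of feasible integer tuples.

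The decisive feature is the source. Because $s$ is the unique source, every vertex is reachable from $s$ (walk backwards along arrows until one reaches a source), so a path $s\to x$ exists and $[P(s)]_x\ne 0$; this gives $l_Q(x,s)=0$ for every $x$. Feeding this into the criterion yields $r_s\le r_x$ for all $x$ whenever $T$ is tilting, i.e. the source orbit always carries the minimal shift. Assertion $(1)$ is then immediate: $T\mapsto r_s$ is well defined and $\mathrm{lk}_{\mathrm p}(\tau^{-r}P(s))=\{T:r_s=r\}$, so these sets partition $\mathcal T_{\mathrm p}(Q)$. For $(2)$ I would use that the criterion is invariant under the simultaneous shift $r_x\mapsto r_x+r$; hence $\tau^{-r}$ sends feasible tuples with $r_s=0$ bijectively to feasible tuples with $r_s=r$, preserving the coordinatewise order and therefore the covering relations, which gives the claimed quiver isomorphism.

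For $(3)$ I would fix a spanning out-tree of $Q$ rooted at $s$ (available by reachability) and, writing $\pi(x)$ for the parent of $x$, define $T\mapsto(r_x-r_{\pi(x)})_{x\ne s}$. Each tree edge is an arrow $\pi(x)\to x$, so $l_Q(x,\pi(x))=0$ (the arrow is a path $\pi(x)\to x$) while $l_Q(\pi(x),x)=1$ (one reads $[\tau^{-1}P(x)]_{\pi(x)}\ne 0$ off the almost split sequence starting at $P(x)$, whose middle term contains $P(\pi(x))$); the criterion then forces every such difference into $\{0,1\}$. Since the tree together with $r_s=0$ recovers all $r_x$ by summation, this is an injection of $\mathrm{lk}_{\mathrm p}(P(s))$ into $\{0,1\}^{n-1}$, proving the bound. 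For $(4)$ and $(5)$ I would identify the covers of the lattice: a cover changes a single orbit index $z$, and the two preprojective completions $\tau^{-r_z}P(z),\tau^{-r_z'}P(z)$ of the common almost complete tilting module $M$ are $\tau$-adjacent with $r_z'=r_z+1$, because standard tilting mutation theory allows $M$ at most two complements. Thus each lattice cover is a genuine exchange between preprojective tilting modules with nothing in between, the arrows of $\tiltq_{\mathrm p}(Q)$ are exactly these single-step mutations, and $(5)$ follows since $r_s$ moves only when $z=s$, then by exactly $1$; the finiteness of $\mathrm{lk}_{\mathrm p}(P(s))$ from $(3)$ makes the Hasse quiver of a finite lattice connected, giving $(4)$.

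Finally, $(6)$ combines the previous parts with one explicit bridging arrow per level. By $(2)$ and $(4)$ each $\overrightarrow{\mathrm{lk}}_{\mathrm p}(\tau^{-r}P(s))$ is connected, so it suffices to link consecutive levels. The tuple with $r_s=0$ and $r_x=1$ for $x\ne s$ is always feasible; its image $\tau^{-r}T^{*}$ under $(2)$ has source summand $\tau^{-r}P(s)$ and all other summands $\tau^{-(r+1)}P(x)$, and the almost split sequence ending at $\tau^{-(r+1)}P(s)$ has middle term $\bigoplus_{s\to x}\tau^{-(r+1)}P(x)$, which lies in $\add$ of the common part precisely because $s$ has no in-arrows; this realizes an arrow $\tau^{-r}T^{*}\to\tau^{-(r+1)}(kQ)$ from level $r$ to level $r+1$, so $\tiltq_{\mathrm p}(Q)$ is connected. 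The step I expect to be the main obstacle is reconciling the abstract lattice of feasible tuples with the actual tilting quiver, namely proving that a lattice cover is realized by a mutation between two \emph{preprojective} tilting modules with no intervening module; since $\mathrm{lk}_{\mathrm p}(P(s))$ need not be order-convex inside $\mathcal T(Q)$ (a tilting module containing $P(s)$ may fail to be preprojective when $Q\setminus\{s\}$ is of infinite type), this forces the at-most-two-complements analysis above rather than any soft convexity argument.
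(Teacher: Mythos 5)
Your backbone is the same as the paper's: coordinatize preprojective tilting modules as tuples via the Ext-criterion (Proposition~\ref{p}, Corollary~\ref{c}), note that $l_Q(x,s)=0$ for all $x$ forces the source coordinate to be minimal, partition by that coordinate for $(1)$, use shift-invariance for $(2)$, a rooted-tree bound for $(3)$, and a one-step bridge between levels for $(6)$ (your realization of that bridge by the almost split sequence ending at $\tau^{-(r+1)}P(s)$ is correct and verifies Definition~\ref{dtiltq} directly, which is nice). The genuine gap is the linchpin of your $(4)$/$(5)$: you assert with no argument that a cover in the lattice of feasible tuples changes a single orbit index. This is not a formal triviality about sublattices of $\Z^{Q_0}$: a subset closed under coordinatewise min, max and simultaneous shift can have covers moving several coordinates at once (e.g. $\{r\in\Z_{\geq 0}^{2}\mid r_{1}=r_{2}\}$, which is cut out by the difference constraints $r_{1}\leq r_{2}+0$, $r_{2}\leq r_{1}+0$). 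What saves the claim here is acyclicity of $Q$: for $x\neq y$ one cannot have $l_Q(x,y)=l_Q(y,x)=0$, since each vanishing produces a path in $Q$, in opposite directions. One then argues that for feasible $r<r'$ there is a coordinate $x$ with $r_x<r'_x$ carrying no tight constraint $r_{x}=r_{y}+l_Q(y,x)$ (tight constraints into such coordinates come from such coordinates, and a cycle of tight constraints forces all the $l_Q$'s on it to vanish, hence a cycle in $Q$), so that increasing that single coordinate by $1$ stays feasible and below $r'$. Without this, your identification of lattice covers with one-summand mutations -- on which $(4)$, $(5)$, and the arrow-preservation in $(2)$ all rest -- is unsupported; and note also that ``at most two complements'' alone does not give the step size $1$: you need the interpolation remark that if $\tau^{-a}P(z)$ and $\tau^{-b}P(z)$ both complement $M$ then so does every $\tau^{-c}P(z)$ with $a\leq c\leq b$, which again comes from Proposition~\ref{p}.

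Your diagnosis of ``the main obstacle'' also rests on a false claim: $\mathrm{lk}_{\mathrm{p}}(P(s))$ \emph{is} order-convex (in fact upward closed) in $\mathcal{T}(Q)$, and this is exactly the paper's Lemma~\ref{l2}: if $T\leq T'$ with $T$ preprojective tilting and $X$ is a non-preprojective indecomposable summand of $T'$, then $\Ext^{1}(\tau^{-r}P,X)\simeq\Ext^{1}(P,\tau^{r}X)=0$ and $\Ext^{1}(X,T)=0$, so $T\oplus X$ is rigid with too many summands unless $X\in\add T$; hence everything above a preprojective tilting module is preprojective, and by coordinates everything above an element of $\mathrm{lk}_{\mathrm{p}}(P(s))$ stays in it. The existence of non-preprojective tilting modules containing $P(s)$ (your parenthetical) is true but irrelevant, since no such module lies above a preprojective one. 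With Lemma~\ref{l2} in hand, $\tiltq_{\mathrm{p}}(Q)$ is literally the Hasse quiver of $(\mathcal{T}_{\mathrm{p}},\leq)$, covers are automatically arrows, and the single-coordinate-by-one statement comes for free in the opposite direction (an arrow of $\tiltq(Q)$ is by definition a one-summand exchange, and the interpolation pins the step to $1$) -- no exchange-sequence theory needed. That is the paper's route; it then proves $(4)$ by a Happel--Unger-type infinite-path argument rather than your ``Hasse quiver of a finite lattice with extreme elements is connected'', though your argument for $(4)$, like the rest of your plan, is fine once the cover gap above is closed.
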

\section{A proof of Theorem\;\ref{t}}
In this section we prove Theorem\;\ref{t}.
Let $Q$ be a quiver with $Q_{0}=\{0,1,\cdots,n-1\}$.
Without loss of generality we assume that if $\exists \alpha:i\rightarrow j$ in $Q$ 
then $i>j$.
For $x\in \{0,1,\cdots,n-1\}$ and $r>0$, put 
$P(x+rn):=\tau^{-r}P(x)$.  And for any $x\in Q_{0}$
put $s(x):=\{\alpha\in Q_{1}\mid s(\alpha)=x\}$ and $t(x):=\{\beta\in Q_{1}\mid t(\alpha)=x\}$. 

We collect basic properties of the Auslander-Reiten translation.
\begin{prop}
\label{iso}
$($\cite{ASS}$)$
Let $A=kQ$ be a path algebra, and $M,N\in \modu A$ be a non-injective
right $A$-modules. Then,
\[\Hom_{A}(M,N)\simeq \Hom_{A}(\tau^{-1}M,\tau^{-1}N). \]  
\end{prop}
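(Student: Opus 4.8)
The statement is a standard fact of Auslander--Reiten theory for hereditary algebras, and the plan is to deduce it from the observation that, over $A=kQ$, the inverse Auslander--Reiten translation is (a shift of) an autoequivalence of the bounded derived category. Concretely, since $A$ is hereditary it has finite global dimension, so $\mathcal{D}^{\mathrm{b}}(\modu A)$ admits a Serre functor $\mathbb{S}=-\otimes^{\mathbb{L}}_{A}DA$, and the derived Auslander--Reiten translate is $\tau=\mathbb{S}\circ[-1]$. In particular $\tau^{-1}=[1]\circ\mathbb{S}^{-1}$ is an autoequivalence of $\mathcal{D}^{\mathrm{b}}(\modu A)$, so for all objects $X,Y$ one gets a natural isomorphism $\Hom_{\mathcal{D}^{\mathrm{b}}}(X,Y)\simeq\Hom_{\mathcal{D}^{\mathrm{b}}}(\tau^{-1}X,\tau^{-1}Y)$, and the whole task is to descend this to module homomorphisms.

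First I would record that for two modules $M,N$, regarded as complexes concentrated in degree $0$, the derived Hom recovers the module Hom, $\Hom_{\mathcal{D}^{\mathrm{b}}}(M,N)=\Hom_{A}(M,N)$, there being no morphisms of nonzero degree between objects placed in degree $0$. Next, using Happel's description of $\mathcal{D}^{\mathrm{b}}$ of a hereditary algebra---in which every indecomposable is a shift $L[i]$ of an indecomposable module---I would check that the derived $\tau^{-1}$ agrees with the classical $\tau^{-1}=\operatorname{Tr}D$ on modules without injective summands: for such an $M$ the complex $\tau^{-1}M$ is again concentrated in degree $0$ and coincides with the usual module $\operatorname{Tr}D\,M$. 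Applying this to the non-injective modules $M$ and $N$ gives $\Hom_{\mathcal{D}^{\mathrm{b}}}(\tau^{-1}M,\tau^{-1}N)=\Hom_{A}(\tau^{-1}M,\tau^{-1}N)$, and substituting both identifications into the autoequivalence isomorphism yields the claim.

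An alternative, entirely module-theoretic route avoids the derived category: the functors $\tau=D\operatorname{Tr}$ and $\tau^{-1}=\operatorname{Tr}D$ are mutually quasi-inverse equivalences between the costable category $\overline{\modu}A$ and the stable category $\underline{\modu}A$, which already yields $\overline{\Hom}(M,N)\simeq\underline{\Hom}(\tau^{-1}M,\tau^{-1}N)$ for all $M,N$; one then upgrades these stable groups to honest Hom-groups by matching, over a hereditary algebra, the maps that factor through injectives on the left with those that factor through projectives on the right. I expect the main obstacle in either approach to be exactly this bookkeeping of degenerate maps: in the derived formulation it is concentrated in the single claim that $\tau^{-1}$ preserves degree $0$ precisely for non-injective modules, whereas in the classical formulation it requires identifying the injective- and projective-factoring subspaces. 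The hereditary hypothesis is essential throughout, since it is what promotes $\tau^{-1}$ from a mere stable equivalence to an exact autoequivalence.
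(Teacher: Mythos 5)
The paper never proves this proposition: it is imported from \cite{ASS} as a standard fact of Auslander--Reiten theory, so there is no internal argument to compare yours against; what matters is whether your plan would close, and in outline it does, with your second (module-theoretic) route being exactly the textbook proof that the citation points to. Two refinements are needed to make it airtight. First, ``non-injective'' must be read as ``without injective direct summands'' (otherwise $M=X\oplus I$ with $I$ injective is a counterexample, since $\tau^{-1}$ kills $I$); your argument implicitly assumes this reading, which is also how the paper uses the statement. Second, the step you defer as ``bookkeeping of degenerate maps'' is not a matter of \emph{matching} the injectively-factoring subspace on one side with the projectively-factoring subspace on the other, but of showing that \emph{both vanish}, and this is precisely where heredity enters: if $f\colon M\to N$ factors through an injective $I$, then $\mathrm{im}(I\to N)$ is a quotient of $I$, hence injective because $A=kQ$ is hereditary, hence a direct summand of $N$, forcing $f=0$ since $N$ has no injective summands; dually, if a map out of $\tau^{-1}M$ factors as $\tau^{-1}M\stackrel{g}{\to}P\to\tau^{-1}N$ with $P$ projective, then $\mathrm{im}\,g$ is a submodule of $P$, hence projective, so the surjection $\tau^{-1}M\to\mathrm{im}\,g$ splits and exhibits a projective direct summand of $\tau^{-1}M=\operatorname{Tr}D\,M$, which has none, forcing the map to be zero. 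With both ideals zero, the stable equivalence $\tau^{-1}\colon\overline{\mathrm{mod}}\,A\to\underline{\mathrm{mod}}\,A$ (valid over any artin algebra) gives $\Hom_A(M,N)=\overline{\Hom}_A(M,N)\simeq\underline{\Hom}_A(\tau^{-1}M,\tau^{-1}N)=\Hom_A(\tau^{-1}M,\tau^{-1}N)$, as desired. Your derived-category route is also correct, but its crux --- that the derived $\tau^{-1}=[1]\circ\mathbb{S}^{-1}$ restricts to the classical $\operatorname{Tr}D$ on modules without injective summands --- rests on Happel's structure theory for $\mathcal{D}^{\mathrm{b}}(kQ)$, which is considerably heavier machinery than the statement needs; the stable-category argument closes entirely at the level of \cite{ARS} and is the one to keep.
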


\begin{prop}
$($\cite{ARS}$)$\;(Auslander-Reiten duality)\;
Let $A=kQ$ be a path algebra, and $M,N\in \modu A$.  Then,
\[D\Hom_{A}(M,N)\simeq \Ext_{A}^{1}(N,\tau M). \]
\end{prop}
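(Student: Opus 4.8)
The plan is to reduce the statement to the defining description of the Auslander--Reiten translation as $\tau=D\operatorname{Tr}$, where $\operatorname{Tr}$ denotes the transpose, and then to transport everything through the Nakayama functor $\nu:=D\Hom_{A}(-,A)$. The single computational input I would isolate first is the natural isomorphism
\[\Hom_{A}(N,\nu P)\simeq D\Hom_{A}(P,N),\]
valid for every projective $P$ and arbitrary $N$. This follows from the tensor--hom adjunction together with $\nu P=D\Hom_{A}(P,A)$; it suffices to verify it for $P=A$, where both sides are $N$, and then extend by additivity. This isomorphism is the bridge that converts the left-hand side $D\Hom_{A}(M,N)$ into an $\Ext$-group.

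First I would fix a minimal projective presentation $P_{1}\xrightarrow{p}P_{0}\to M\to 0$; since $A=kQ$ is hereditary one may take it to be a short exact sequence $0\to P_{1}\xrightarrow{p}P_{0}\to M\to 0$. Applying $\Hom_{A}(-,A)$ produces the transpose as $\operatorname{Tr}M=\operatorname{coker}(p^{\ast})$, and applying the exact duality $D$ then yields an injective copresentation
\[0\to \tau M\to \nu P_{1}\xrightarrow{\nu p}\nu P_{0},\]
because $\tau M=D\operatorname{Tr}M$ and each $\nu P_{i}=D\Hom_{A}(P_{i},A)$ is injective. Over the hereditary algebra $A$ this extends on the right to the four-term exact sequence $0\to \tau M\to \nu P_{1}\to \nu P_{0}\to \nu M\to 0$.

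Next I would apply $\Hom_{A}(N,-)$. Since $\nu P_{1}$ is injective we have $\Ext^{1}_{A}(N,\nu P_{1})=0$, so $\Ext^{1}_{A}(N,\tau M)$ is read off as a cokernel attached to the complex $\Hom_{A}(N,\nu P_{1})\to\Hom_{A}(N,\nu P_{0})$. Feeding in the bridging isomorphism turns this complex into $D$ applied to $\Hom_{A}(P_{0},N)\xrightarrow{p^{\ast}}\Hom_{A}(P_{1},N)$, and since $D$ is exact its cokernel is $D(\ker p^{\ast})$. But $\ker(p^{\ast})=\Hom_{A}(M,N)$ by left-exactness of $\Hom_{A}(-,N)$ on the presentation of $M$, giving the target $D\Hom_{A}(M,N)$.

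The main obstacle is the precise identification of the cokernel in this last step, because the copresentation $0\to\tau M\to\nu P_{1}\xrightarrow{\nu p}\nu P_{0}$ is not right exact: the map $\nu p$ is not surjective, so $\Ext^{1}_{A}(N,\tau M)$ is the cokernel of $\Hom_{A}(N,\nu P_{1})\to\Hom_{A}(N,\operatorname{im}\nu p)$ rather than of the full complex. Chasing the four-term sequence identifies this with the subspace $D\underline{\Hom}_{A}(M,N)\subseteq D\Hom_{A}(M,N)$, where $\underline{\Hom}$ means morphisms taken modulo those factoring through a projective module, the discrepancy being governed by the map $\Hom_{A}(N,\nu P_{0})\to\Hom_{A}(N,\nu M)$. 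The careful work is therefore to check naturality of the bridging isomorphism in both variables and to confirm that for the modules in play this correction vanishes, so that $\underline{\Hom}_{A}(M,N)=\Hom_{A}(M,N)$ and the stated clean form results.
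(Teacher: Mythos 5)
The paper offers no proof of this proposition at all --- it is quoted from \cite{ARS} --- so your proposal must be measured against the standard argument in the literature (\cite{ARS}, or \cite{ASS}, IV.2.13--2.14), and that is precisely the route you take: minimal projective presentation, Nakayama functor $\nu=D\Hom_{A}(-,A)$, the bridging isomorphism $\Hom_{A}(N,\nu P)\simeq D\Hom_{A}(P,N)$, and the identification of $\Ext^{1}_{A}(N,\tau M)$ from the copresentation $0\to\tau M\to\nu P_{1}\to\nu P_{0}$. Everything up to the conclusion $\Ext^{1}_{A}(N,\tau M)\simeq D\underline{\Hom}_{A}(M,N)$, with $\underline{\Hom}$ the quotient of $\Hom$ by maps factoring through projectives, is correct and is exactly the textbook computation.

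The genuine gap is your final step, which you defer as ``careful work\ldots to confirm that for the modules in play this correction vanishes.'' That confirmation is impossible in the stated generality: taking $M=N=A$ gives $D\Hom_{A}(A,A)=DA\neq 0$ while $\Ext^{1}_{A}(A,\tau A)=\Ext^{1}_{A}(A,0)=0$, so $\underline{\Hom}_{A}(M,N)=\Hom_{A}(M,N)$ fails whenever $M$ has a projective direct summand admitting a nonzero map to $N$. The proposition as printed is the usual slight abuse; in this paper it is only ever applied with $M$ of the form $\tau^{-t}P$, $t\geq 1$, which can have no projective summands (a nonzero $\tau^{-1}X$ is never projective). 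What your proposal is missing is the one place where heredity genuinely enters, and it is a short argument you never supply: if $f\colon M\to N$ factors as $M\to P^{'}\to N$ with $P^{'}$ projective, the image of $M\to P^{'}$ is a submodule of a projective, hence projective because $A=kQ$ is hereditary, so the surjection of $M$ onto that image splits and exhibits a projective direct summand of $M$; therefore $\underline{\Hom}_{A}(M,N)=\Hom_{A}(M,N)$ exactly when $M$ has no projective direct summands, which is the correct clean form of the statement. (A minor point in the other direction: heredity is not needed for the four-term sequence $0\to\tau M\to\nu P_{1}\to\nu P_{0}\to\nu M\to 0$, which holds over any artin algebra as $D$ applied to the exact transpose sequence; heredity only gives you the injectivity of $P_{1}\to P_{0}$.)
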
 

\begin{prop}
\label{dim}
$($\cite{G}$)$
Let $A=kQ$ be a path algebra and $M\in \ind A$. Then for any indecomposable non-projective module $X$ and almost
split sequence
\[0\rightarrow \tau X\rightarrow E\rightarrow X\rightarrow 0,\]
we get
\[\mathrm{dim}\Hom(M,\tau X)-\mathrm{dim}\Hom(M,E)+\mathrm{dim}\Hom(M,X)=\left\{\begin{array}{ll}
1 & X\simeq M \\ 
0 & \mathrm{otherwise.}
\end{array}
\right.\]
\end{prop}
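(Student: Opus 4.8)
The plan is to reduce the statement to the left-exactness of $\Hom_A(M,-)$ together with the defining property of almost split sequences, so that no higher cohomology need be controlled. Applying the covariant left-exact functor $\Hom_A(M,-)$ to the almost split sequence $0\to\tau X\to E\to X\to 0$ yields an exact sequence
\[0\to \Hom_A(M,\tau X)\to \Hom_A(M,E)\xrightarrow{\pi_{*}}\Hom_A(M,X),\]
where $\pi\colon E\to X$ denotes the deflation. Since $\dim$ is additive on exact sequences of finite dimensional $k$-spaces, the alternating sum appearing on the left-hand side of the asserted formula is exactly $\dim\mathrm{coker}(\pi_{*})$. Thus everything reduces to computing this cokernel, and in particular the terms $\Hom_A(M,\tau X)$ and $\Hom_A(M,E)$ never have to be evaluated individually.

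Next I would identify the image of $\pi_{*}$. By definition $f\colon M\to X$ lies in $\mathrm{im}(\pi_{*})$ precisely when $f$ factors through $\pi$. That the given sequence is almost split means exactly that $\pi$ is right almost split, so a morphism $M\to X$ factors through $\pi$ if and only if it is not a split epimorphism: the inclusion of non-retractions into factoring maps is the right almost split property itself, while conversely a factoring split epimorphism would force $\pi$ to split, contradicting non-splitness. Hence $\mathrm{im}(\pi_{*})$ is precisely the set of morphisms $M\to X$ that are not split epimorphisms.

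Now I split into two cases. If $M\not\simeq X$, then because $M$ and $X$ are both indecomposable no morphism $M\to X$ can be a split epimorphism (such a retraction would realize $X$ as a summand of $M$, forcing $M\simeq X$); therefore $\pi_{*}$ is surjective and $\dim\mathrm{coker}(\pi_{*})=0$. If $M\simeq X$, the split epimorphisms $X\to X$ are exactly the isomorphisms, so $\mathrm{im}(\pi_{*})$ is the set of non-invertible endomorphisms, which coincides with the two-sided ideal $\Rad\Endhom_A(X)$ since $\Endhom_A(X)$ is local ($X$ being indecomposable over a finite dimensional algebra). Consequently $\mathrm{coker}(\pi_{*})\simeq \Endhom_A(X)/\Rad\Endhom_A(X)\simeq k$, using that $k$ is algebraically closed, so the dimension is $1$; this yields the stated case distinction.

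The only genuinely delicate point is the middle step: translating the categorical condition ``not a split epimorphism'' into the honest linear description of $\mathrm{im}(\pi_{*})$, and in the case $M\simeq X$ matching the set of non-isomorphisms with $\Rad\Endhom_A(X)$ via locality of the endomorphism ring. Once that identification is pinned down, additivity of dimension together with the two elementary case computations completes the argument. I note that the hereditary hypothesis on $A=kQ$ plays no role in this particular formula; all that is used is that $A$ is a finite dimensional $k$-algebra over an algebraically closed field $k$ and that almost split sequences exist in $\modu A$.
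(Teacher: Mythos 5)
The paper offers no proof of this proposition at all---it is quoted verbatim from Gabriel \cite{G}---so there is no internal argument to compare against; your proof is correct and is in fact the standard one from the cited literature. Applying the left-exact functor $\Hom_A(M,-)$, reducing the alternating sum to $\dim\mathrm{coker}(\pi_*)$, identifying $\mathrm{im}(\pi_*)$ with the non-split-epimorphisms via the right almost split property, and then splitting into the cases $M\not\simeq X$ (no split epimorphisms between non-isomorphic indecomposables, so $\pi_*$ is onto) and $M\simeq X$ (where $\mathrm{im}(\pi_*)=\Rad\Endhom_A(X)$ by locality and $\Endhom_A(X)/\Rad\Endhom_A(X)\simeq k$) is exactly the textbook argument; your closing remarks are also accurate---hereditarity plays no role, while algebraic closedness of $k$ is genuinely used in the case $M\simeq X$, since over a general Artin algebra the right-hand side would instead be $\dim_k\Endhom_A(X)/\Rad\Endhom_A(X)$.
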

 Let $d_{a}(b):=\mathrm{dim}$\;$\Ext^{1}_{kQ}(P(b),P(a))$. If $Q$ satisfies
 the condition (b) of Theorem\;\ref{t}, then $kQ$ is representation infinite and pre-projective part of its AR-quiver is $\Z_{\leq 0}Q$ (cf.\cite{ASS}).
 So from above proposition and AR-duality, we get the following,
\[d_{a}(x+rn)=\left\{\begin{array}{ll}
0 & x+rn<a+n \\ 
1 & x+rn=a+n \\ 
\sum_{\alpha\in s(x)}d_{a}(t(\alpha)+rn)+\sum_{\beta\in t(x)}d_{a}(s(\beta)+(r-1)n) & \\
\ \ \ \ \ \ \ \ \ \ \ \ \ \ \ \ \ \ \ \ \ \ \ \ \ \ \ \ \ \;-d_{a}(x+(r-1)n) & x+rn>a+n, \\
\end{array} \right.\]
\begin{lem}
\label{l}
Assume $Q$ satisfies the condition $(b)$ of Theorem\;\ref{t}. Let $\gamma : x\rightarrow y$ in $Q$. Then, 
\[d_{a}(y+rn)\leq d_{a}(x+rn)\leq d_{a}(y+(r+1)n)\]
for any $r\geq 0$.
\end{lem}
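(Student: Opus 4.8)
The plan is to prove both inequalities directly from the displayed recursion for $d_{a}$ by induction. Morally, $d_{a}(b)=\dim\Ext^{1}_{kQ}(P(b),P(a))$ ought to be non-decreasing along the path $\cdots\to P(y+rn)\to P(x+rn)\to P(y+(r+1)n)\to\cdots$ of irreducible maps attached to $\gamma$, but I will not need that picture. Write (I) for the claim $d_{a}(y+rn)\le d_{a}(x+rn)$ and (II) for $d_{a}(x+rn)\le d_{a}(y+(r+1)n)$; since $\gamma:x\to y$ forces $x>y$ we have $y+rn<x+rn<y+(r+1)n$. First I would dispose of the boundary cases: when $x+rn\le a+n$ both $d_{a}(x+rn)$ and $d_{a}(y+rn)$ are $0$ or $1$ by the first two lines of the recursion and (I) is immediate, while (II) is trivial whenever $d_{a}(x+rn)=0$. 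In the remaining case I apply the third line of the recursion and split off the $\gamma$-summand. Since $\gamma\in s(x)$ with $t(\gamma)=y$, this rewrites the recursion as
\[d_{a}(x+rn)-d_{a}(y+rn)=\sum_{\alpha\in s(x),\alpha\neq\gamma}d_{a}(t(\alpha)+rn)+\sum_{\beta\in t(x)}d_{a}(s(\beta)+(r-1)n)-d_{a}(x+(r-1)n),\]
so (I) becomes the non-negativity of the right-hand side, call it $(\star)$. Symmetrically, applying the recursion to $d_{a}(y+(r+1)n)$ and splitting off the $\gamma$-summand of the $t(y)$-sum (here $\gamma\in t(y)$, $s(\gamma)=x$) reduces (II) to
\[\sum_{\alpha'\in s(y)}d_{a}(t(\alpha')+(r+1)n)+\sum_{\beta'\in t(y),\beta'\neq\gamma}d_{a}(s(\beta')+rn)\ge d_{a}(y+rn),\]
call it $(\star\star)$.

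To establish $(\star)$ I would use condition (b), namely $\#s(x)+\#t(x)\ge 2$. If $\#t(x)\ge 1$, pick any $\beta\in t(x)$; the arrow $\beta:s(\beta)\to x$ gives, by (I) at level $r-1$, the bound $d_{a}(x+(r-1)n)\le d_{a}(s(\beta)+(r-1)n)$, which cancels the single negative term, all remaining summands being non-negative. If instead $\#t(x)=0$, then $\#s(x)\ge 2$, so there is an $\alpha\in s(x)$ with $\alpha\neq\gamma$; the arrow $\alpha:x\to t(\alpha)$ gives, by (II) at level $r-1$, the bound $d_{a}(x+(r-1)n)\le d_{a}(t(\alpha)+rn)$, again cancelling the negative term. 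Hence every instance of (I) at level $r$ follows from (I) and (II) at level $r-1$.

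For $(\star\star)$ the argument is parallel but stays within level $r$. If $\#s(y)\ge 1$, an arrow $\alpha':y\to t(\alpha')$ gives, by (II) at level $r$, the bound $d_{a}(y+rn)\le d_{a}(t(\alpha')+(r+1)n)$; if $\#s(y)=0$, then $\#t(y)\ge 2$ by (b), so some $\beta'\in t(y)$ with $\beta'\neq\gamma$ gives, by (I) at level $r$, the bound $d_{a}(y+rn)\le d_{a}(s(\beta')+rn)$. In either case a single summand on the left of $(\star\star)$ already dominates $d_{a}(y+rn)$, and $(\star\star)$ follows. The decisive observation is that the instances of (II) invoked here are for arrows emanating from $y$, whose targets $t(\alpha')$ satisfy $t(\alpha')<y$.

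This dictates the induction order: an outer induction on $r$ (the base $r=0$ is trivial, since every $x\le n-1<a+n$ gives $d_{a}(x)=d_{a}(y)=0$); within level $r$ I first prove all instances of (I), which need only level $r-1$, and then prove (II) by a secondary induction on the target vertex $y$ in increasing order, legitimate because (II) for target $y$ was reduced only to (I) at level $r$ and to (II) at level $r$ for arrows with strictly smaller target. The main obstacle, and the only delicate point, is exactly this bookkeeping: (I) and (II) are mutually recursive, so naively one risks circularity, and one must check that each invoked instance genuinely lives at a strictly earlier stage. What unblocks it is that splitting off the $\gamma$-contribution turns each inequality into a non-negativity statement in which condition (b) guarantees at least one surviving summand, and that this surviving summand is comparable — through an already-proved instance of (I) or (II) at the predecessor vertex — to the single subtracted value $d_{a}(x+(r-1)n)$ or $d_{a}(y+rn)$; tracking the level and vertex of each invocation then shows the whole scheme is well-founded.
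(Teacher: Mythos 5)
Your proof is correct and follows essentially the same route as the paper's: split off the $\gamma$-summand in the recursion, then use condition (b) together with an already-established instance of (I) or (II) at a predecessor arrow to dominate the single subtracted term $d_{a}(x+(r-1)n)$ (resp.\ $d_{a}(y+rn)$), all other summands being non-negative. The paper packages the bookkeeping as one induction on the target index $y+rn$ (proving (I) before (II) at each index), which is just an equivalent well-ordering of your nested induction on the level $r$ and then the target vertex.
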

\begin{proof}
We can assume $a<n$, and use induction on $y+rn$.

$(y+rn< n\;(\mathrm{i.e.\;r=0}))$: In this case $d_{a}(y)=d_{a}(x)=0$.

$(n\leq y+rn<a+n)$: In this case $d_{a}(y+rn)=0$ and
\[d_{a}(y+(r+1)n)=\sum_{\alpha\in s(y)}d_{a}(t(\alpha)+(r+1)n)+\sum_{\beta\in t(y)}d_{a}(s(\beta)+rn)\geq d_{a}(x+rn).\]

$(y+rn\geq a+n)$: In this case
\[\begin{array}{lll}
d_{a}(x+rn) & = & \sum_{\alpha\in s(x)}d_{a}(t(\alpha))+\sum_{\beta\in t(x)}d_{a}(s(\beta))-d_{a}(x+(r-1)n) \\ 
 & = & \sum_{\alpha\in s(x)\setminus \{\gamma\}}d_{a}(t(\alpha)+rn)+\sum_{\beta\in t(x)}d_{a}(s(\beta)+(r-1)n) \\
 & & -d_{a}(x+(r-1)n)+d_{a}(y+rn) \\ 
(\ast)\cdots & \geq & d_{a}(y+rn) \\
\end{array}
\]
(remark. $(\ast)$ is followed by (b) and hypothesis of induction.)
and similarly we can get $d_{a}(y+(r+1)n)\geq d_{a}(x+rn)$.
\end{proof}
Let $\tilde{Q}$ be a quiver obtained from $Q$ by adding new edge
$-\alpha:y\rightarrow x$ for any $\alpha:x\rightarrow y$.  
For a path $w:x_{0}\stackrel{\alpha_{1}}{\rightarrow}x_{1}\stackrel{\alpha_{2}}
{\rightarrow}\cdots \stackrel{\alpha_{r}}{\rightarrow} x_{r}$ in $\tilde{Q}$, put
$c^{+}(w):=\#\{t\mid \alpha_{t}\in Q_{1}\subset \tilde{Q}_{1}\}$, and let $l_{Q}(i,j):=\mathrm{min}\{c^{+}(w)\mid w:\text{path\ from}\  i\  \mathrm{to}\  j\  \mathrm{in}\  \tilde{Q}\}$ (set $l_{Q}(i,i)=0$ for
any $i$ ).
\begin{prop}
\label{p}
If $Q$ satisfies the condition $(b)$ of Theorem\;\ref{t},
 then
 \[\Ext_{kQ}^{1}(\tau^{-r}P(i),\tau^{-s}P(j))=0\Leftrightarrow r\leq s+l_{Q}(j,i)\]
 \end{prop}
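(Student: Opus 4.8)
The plan is to translate everything into the combinatorial function $d_a$ and then prove a sharpened claim by induction, with Lemma \ref{l} as the main engine. Writing $P(i+rn)=\tau^{-r}P(i)$ and $d_a(b)=\dim\Ext^1_{kQ}(P(b),P(a))$, the asserted equivalence reads $d_{j+sn}(i+rn)=0\Leftrightarrow r\le s+l_Q(j,i)$. First I would reduce to the case $s=0$. Proposition \ref{iso} together with Auslander--Reiten duality shows that the whole configuration is invariant under simultaneous translation, i.e.\ $d_{a+n}(b+n)=d_a(b)$; iterating this gives $d_{j+sn}(i+rn)=d_j(i+(r-s)n)$ whenever $r\ge s$. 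For $r\le s$ the first line of the recursion for $d_a$ already yields $d_{j+sn}(i+rn)=0$, since $i+rn<j+(s+1)n$, and there $r\le s\le s+l_Q(j,i)$ holds trivially. Hence everything reduces to proving, for $m\ge 0$, that $d_j(i+mn)=0\iff m\le l_Q(j,i)$.

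Before the induction I would record two monotonicity facts. On the combinatorial side, appending edges to optimal paths in $\tilde Q$ gives, for every arrow $\gamma:x\to y$ of $Q$, the inequalities $l_Q(j,x)\le l_Q(j,y)\le l_Q(j,x)+1$ (the reverse edge $-\gamma:y\to x$ costs nothing, while $\gamma$ costs one). On the representation-theoretic side, Lemma \ref{l} gives, for such $\gamma$, both $d_a(y+rn)\le d_a(x+rn)$ and $d_a(x+rn)\le d_a(y+(r+1)n)$; since condition $(\mathrm b)$ guarantees that every vertex $i$ is incident to at least one arrow, chaining these two inequalities through that arrow yields the monotonicity $d_j(i+mn)\le d_j(i+(m+1)n)$ in $m$.

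For the implication ``$m\le l_Q(j,i)\Rightarrow d_j(i+mn)=0$'' I would use strong induction on $i+mn$. If $i+mn\le j+n$ the first two lines of the recursion settle it (the equality case would force $i=j$, $m=1$, $l_Q(j,i)=0$, contradicting $m\le l_Q$). Otherwise I expand $d_j(i+mn)$ by the third line of the recursion. For $\alpha\in s(i)$ the inequality $l_Q(j,i)\le l_Q(j,t(\alpha))$ gives $m\le l_Q(j,t(\alpha))$, so $d_j(t(\alpha)+mn)=0$ by induction; for $\beta\in t(i)$ the inequality $l_Q(j,i)\le l_Q(j,s(\beta))+1$ gives $m-1\le l_Q(j,s(\beta))$, so $d_j(s(\beta)+(m-1)n)=0$; and the subtracted term $d_j(i+(m-1)n)$ vanishes since $m-1\le l_Q(j,i)$. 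Every summand being $0$, the alternating sum is $0$. The point is that the dangerous subtraction is harmless here because all terms vanish simultaneously.

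The converse, ``$m>l_Q(j,i)\Rightarrow d_j(i+mn)>0$'', is the crux, since the subtraction makes positivity hard to read off the recursion directly; I would instead obtain it geometrically. Choose an optimal path $j=v_0\to v_1\to\cdots\to v_p=i$ in $\tilde Q$ with $c^+=l_Q(j,i)$ and propagate positivity starting from $d_j(j+n)=1$: along a backward edge apply $d_a(y+rn)\le d_a(x+rn)$ to keep the level, and along a forward edge apply $d_a(x+rn)\le d_a(y+(r+1)n)$ to raise the level by one. After traversing the path the level has increased by exactly $c^+=l_Q(j,i)$, so $d_j(i+(l_Q(j,i)+1)n)\ge d_j(j+n)=1$, and the monotonicity in $m$ then propagates this to every $m>l_Q(j,i)$. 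I expect this propagation step, and the clean matching of ``number of forward edges'' with ``increase of translation level'', to be the main obstacle to get exactly right.
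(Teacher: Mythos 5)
Your proposal is correct and takes essentially the same route as the paper: the same reduction to $s=0$ via Proposition~\ref{iso}, the recursion plus the inequalities $l_{Q}(j,i)\leq l_{Q}(j,t(\alpha))$ and $l_{Q}(j,i)\leq l_{Q}(j,s(\beta))+1$ for the vanishing direction (your strong induction on $i+mn$ is just a cleaner packaging of the paper's minimal-counterexample argument with $\mathcal{A}(j)$), and propagation along an optimal path in $\tilde{Q}$ via Lemma~\ref{l} for the non-vanishing direction. The "propagation step" you flag as the main risk is precisely the paper's chain of inequalities $0=d_{j}(x_{t}+rn)\geq\cdots\geq d_{j}(j+n)>0$, read in the opposite direction, so it goes through exactly as you describe.
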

 \begin{proof} 
 $(\Rightarrow)$: Let $w:j=x_{0}\stackrel{\alpha_{1}}{\rightarrow}x_{1}\rightarrow
 \cdots \stackrel{\alpha_{t}}{\rightarrow}x_{t}=i$ be a path s.t.
 $l(j,i):=l_{Q}(j,i)=c^{+}(w)$ and $\{k_{1}<k_{2}<\cdots<k_{l(j,i)}\}=
 \{k\mid \alpha_{k}\in Q_{1}\}.$ If $\exists r>l(j,i)$ s.t.
 $\Ext^{1}_{kQ}(\tau^{-r}P(i),P(j))=0$, then, by Lemma\;\ref{l}, we get
 \[\begin{array}{l}
 0=d_{j}(x_{t}+rn)\geq d_{j}(x_{k_{l(j,i)}}+rn)\geq d_{j}(x_{k_{l(j,i)}-1}+(r-1)n)\\
 \geq \cdots \geq d_{j}(x_{k_{1}}+(r-l(j,i)+1)n)\geq d_{j}(x_{k_{1}-1}+(r-l(j,i))n)\\ 
 \geq d_{j}(j+(r-l(j,i))n))\geq d_{j}(j+(r-l(j,i)-1)n))
 \geq \cdots \geq d_{j}(j+n)>0,
 \end{array}\]
 and this is a contradiction. So if $\Ext_{kQ}^{1}(\tau^{-r}P(i),\tau^{-s}P(j))=0$ with $r > s+l(j,i)$, then by proposition\;\ref{iso}, we get a contradiction.
 
$(\Leftarrow)$ Let $\mathcal{A}(j):=\{(i,r)\mid r\leq l(j,i),\ \Ext^{1}_{kQ}(\tau^{-r}P(i),P(j))\neq 0\}$. If $\mathcal{A}(j)\neq \emptyset$,
 then we can take $r:=\mathrm{min}\{r\mid (i,r)\in \mathcal{A}(j)\ \mathrm{for\ some\ }i\}$ and $i\in Q_{0}$ s.t.\;$(i,r)\in \mathcal{A}(j)$
$(i^{'},r)\notin \mathcal{A}(j)$ for any $i^{'}\leftarrow i$ in $Q$.
Now 
\[0< d_{j}(i+rn)\leq \sum_{\alpha\in s(i)}d_{j}(t(\alpha)+rn)+\sum_{\beta\in t(i)}d_{j}(s(\beta)+(r-1)n)\]
implies that $d_{j}(t(\alpha)+rn)\neq 0 $ for some $\alpha \in s(i)$ or $d_{j}(s(\beta)+(r-1)n)\neq 0$ for some $\beta \in t(i)$.
Note that $r\leq l(j,i)\leq l(j,t(\alpha)),\ l(j,s(\beta))+1$ for any $\alpha \in s(i)$ and $\beta \in t(i)$.
So $d_{j}(t(\alpha)+rn)=0=d_{j}(s(\beta)+(r-1)n)$ for any $\alpha \in s(i)$ and $\beta \in t(i)$ and this is a contradiction. So we get 
$\mathcal{A}(j)=\emptyset.$ 

We assume that $\exists i\in Q_{0}$ and $\exists r,s\in \Z_{\geq 0}$
 s.t.\;$r\leq s+l(j,i)$ and \[\Ext_{kQ}^{1}(\tau^{-r}P(i),\tau^{-s}P(j))\neq 0.\] 
 If $r< s$, then proposition\;\ref{iso} shows $\Ext_{kQ}^{1}(\tau^{-r}P(i),\tau^{-s}P(j))= 0$. So $r\geq s$. Now
 proposition\;\ref{iso} implies $(i,r-s)\in \mathcal{A}(j)$ and this is
 a contradiction.    
   
 \end{proof}
\begin{lem}
 \label{l2}
 Let $T\in \mathcal{T}_{\mathrm{p}}(Q)$ and $T\leq T^{'}\in \mathcal{T}(Q)$.
Then $T^{'}\in \mathcal{T}_{\mathrm{p}}(Q)$. In particular $\overrightarrow{\mathcal{T}}_{\mathrm{p}}(Q)$ $(\mathrm{resp}$.\;$\overrightarrow{\mathrm{lk}}_{\mathrm{p}}(M))$ is a 
Hasse-quiver of $(\mathcal{T}_{\mathrm{p}},\leq)$ $(\mathrm{resp}$.\;$(\mathrm{lk}_{\mathrm{p}}(M),\leq ))$. 
\end{lem}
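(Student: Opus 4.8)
The plan is to prove first the principal assertion, that $\mathcal{T}_{\mathrm{p}}(Q)$ is an up-set (order filter) in $(\mathcal{T}(Q),\leq)$, and then to deduce the two Hasse-quiver statements by a purely order-theoretic argument. Unwinding the definition of the order via $T^{'}\geq T\Leftrightarrow\Ext^{1}_{kQ}(T^{'},T)=0$, the hypothesis $T\leq T^{'}$ means precisely $\Ext^{1}_{kQ}(T^{'},T)=0$, and hence $\Ext^{1}_{kQ}(X,T)=0$ for every indecomposable direct summand $X$ of $T^{'}$. I want to show that every such $X$ is pre-projective; since $T^{'}$ is then a direct sum of pre-projective indecomposables, it lies in $\mathcal{T}_{\mathrm{p}}(Q)$. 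So assume for contradiction that some indecomposable summand $X$ of $T^{'}$ is regular or pre-injective.

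The argument rests on two ingredients. The first is the standard directedness of $\modu kQ$ for $kQ$ hereditary and representation-infinite (which holds here by condition $(\mathrm{b})$, cf.\;\cite{ASS}): there are no nonzero morphisms from a regular or pre-injective module into a pre-projective module. As $T\in\mathcal{T}_{\mathrm{p}}(Q)$ is a direct sum of modules $\tau^{-r}P(j)$, this yields $\Hom_{kQ}(X,T)=0$. The second ingredient is that for any tilting module $T$ the class ${}^{\perp}T:=\{M\mid \Hom_{kQ}(M,T)=0=\Ext^{1}_{kQ}(M,T)\}$ is zero. To see this I would apply $\Hom_{kQ}(M,-)$ to a defining tilting sequence $0\to kQ\to T^{0}\to T^{1}\to 0$ with $T^{i}\in\add T$; the vanishing of $\Hom_{kQ}(M,T^{0})$, $\Hom_{kQ}(M,T^{1})$ and $\Ext^{1}_{kQ}(M,T^{0})$ forces $\Hom_{kQ}(M,kQ)=0$ and $\Ext^{1}_{kQ}(M,kQ)=0$. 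Over a hereditary algebra $\Ext^{1}_{kQ}(M,kQ)=0$ already forces $M$ to be projective (equivalently $\tau M=0$, as one sees from a minimal projective presentation), after which $\Hom_{kQ}(M,kQ)=0$ forces $M=0$. Applying this to our summand $X$, which satisfies both $\Hom_{kQ}(X,T)=0$ and $\Ext^{1}_{kQ}(X,T)=0$, gives $X=0$, the desired contradiction. Hence $T^{'}\in\mathcal{T}_{\mathrm{p}}(Q)$, and $\mathcal{T}_{\mathrm{p}}(Q)$ is an up-set. I expect this passage to be the main obstacle: the hypothesised Ext-vanishing alone is controlled only among pre-projectives (by Proposition\;\ref{p}) and does not by itself exclude regular or pre-injective summands; it must be combined with the Hom-vanishing coming from directedness, and recognising ${}^{\perp}T=0$ as the mechanism that closes the argument is the crux.

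For the ``in particular'' part I would record the elementary fact that if $C\subseteq P$ is a convex subset of a poset (i.e.\;$a,b\in C$ and $a\leq c\leq b$ imply $c\in C$), then the full sub-quiver of the Hasse-quiver of $P$ supported on $C$ is exactly the Hasse-quiver of $(C,\leq)$. The first assertion shows $\mathcal{T}_{\mathrm{p}}(Q)$ is an up-set, hence convex, so this applies to $C=\mathcal{T}_{\mathrm{p}}(Q)$ and gives the statement for $\overrightarrow{\mathcal{T}}_{\mathrm{p}}(Q)$. For $\mathrm{lk}_{\mathrm{p}}(M)=\mathrm{lk}(M)\cap\mathcal{T}_{\mathrm{p}}(Q)$ I would note that $\mathrm{lk}(M)$ is itself convex: if $M\in\add T_{1}\cap\add T_{2}$ and $T_{1}\geq Z\geq T_{2}$, then for each indecomposable summand $M_{k}$ of $M$ one has $\Ext^{1}_{kQ}(M_{k},Z)=0=\Ext^{1}_{kQ}(Z,M_{k})$, so $Z\oplus M_{k}$ is rigid; since $Z$ already has $n$ pairwise non-isomorphic indecomposable summands, necessarily $M_{k}\in\add Z$, whence $M\in\add Z$ and $Z\in\mathrm{lk}(M)$. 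Thus $\mathrm{lk}_{\mathrm{p}}(M)$, being the intersection of a convex set with the up-set $\mathcal{T}_{\mathrm{p}}(Q)$, is convex, and the same fact yields that $\overrightarrow{\mathrm{lk}}_{\mathrm{p}}(M)$ is the Hasse-quiver of $(\mathrm{lk}_{\mathrm{p}}(M),\leq)$.
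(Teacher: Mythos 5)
Your proof is correct, but the core argument is genuinely different from the paper's. The paper also fixes a non-pre-projective indecomposable summand $X$ of $T^{'}$, but rather than killing $\Hom_{kQ}(X,T)$ it kills $\Ext^{1}_{kQ}(T,X)$: since $X$ is not pre-projective, $\tau^{r}X$ is nonzero with $\tau^{-r}\tau^{r}X\simeq X$ for all $r\geq 0$, so $\Ext^{1}_{kQ}(\tau^{-r}P,X)\simeq\Ext^{1}_{kQ}(P,\tau^{r}X)=0$ for every projective $P$, hence $\Ext^{1}_{kQ}(T,X)=0$; combined with $\Ext^{1}_{kQ}(X,T)=0$ (the hypothesis $T\leq T^{'}$), the module $T\oplus X$ is rigid, and maximality of the tilting module $T$ forces $X\in\add T$, which is absurd because every module in $\add T$ is pre-projective. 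So the paper stays entirely within Ext-vanishing and the $\tau$-shift, and the maximal-rigidity fact it relies on is precisely the one you invoke anyway for the convexity of $\mathrm{lk}(M)$ --- in that sense its proof uses strictly less machinery. Your route instead combines the Hom-vanishing $\Hom_{kQ}(X,T)=0$ coming from the separation of the pre-projective, regular, and pre-injective components (which requires representation-infiniteness, i.e.\ condition (b)) with the left-perpendicular property ${}^{\perp}T=0$ of tilting modules, proved via the sequence $0\to kQ\to T^{0}\to T^{1}\to 0$ and hereditariness; this is heavier but equally standard, and it yields the stronger conclusion $X=0$ rather than $X\in\add T$. Finally, your explicit treatment of the ``in particular'' clause --- up-set implies convex, convexity of $\mathrm{lk}(M)$ via maximal rigidity, and the observation that Hasse-quivers restrict to full sub-quivers on convex subsets --- fills in an argument that the paper omits entirely, treating that clause as immediate.
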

 \begin{proof}
Let $X$ be an indecomposable direct summand of $T^{'}$. If $X$ is not pre-projective, then $\Ext^{1}_{kQ}(\tau^{-r}P,X)\simeq \Ext^{1}_{kQ}(P,\tau^{r}X)=0 $ 
for any projective module $P$. So $\Ext^{1}_{kQ}(T,X)=0$.
Since $\Ext^{1}_{kQ}(X,T)=0$, we get $X\in \add T$. This is a contradiction.

\end{proof} 
\begin{lem}
Let $T=\oplus\tau^{-r_{i}}P(i),T^{'}=\oplus\tau^{-r^{'}_{i}}P(i)\in \mathcal{T}_{\mathrm{p}}(Q)$. 
If $T\rightarrow T^{'}$ in $\overrightarrow{\mathcal{T}}_{\mathrm{p}}(Q)$\;$($so in $\overrightarrow{\mathcal{T}}(Q))$ then
$\exists i\;\mathrm{s.t.}\;r^{'}_{i}=r_{i}+1$ and $r^{'}_{j}=r_{j}\;(\forall j\neq i)$.  

\end{lem}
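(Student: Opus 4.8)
The plan is to use the Ext-vanishing criterion of Proposition \ref{p} to turn every comparison into a linear inequality among the shift exponents, and then to exhibit an interpolating chain of pre-projective tilting modules that would violate the covering property unless $r'_i = r_i + 1$.

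First I would pin down the shape of an arrow. By Lemma \ref{l2} the arrow $T \to T'$ is a covering relation in $(\mathcal{T}_{\mathrm{p}}(Q), \le)$, and by the identification of $\overrightarrow{\mathcal{T}}(Q)$ with the Hasse-quiver (\cite{HU1}) together with Definition \ref{dtiltq} I may write $T = M \oplus X$, $T' = M \oplus Y$ with $X, Y$ indecomposable and a non-split sequence $0 \to X \to M' \to Y \to 0$, $M' \in \add M$. Every indecomposable pre-projective summand has the form $\tau^{-r}P(x)$, and by Proposition \ref{p} with $l_Q(x,x)=0$ no basic tilting module can contain two summands $\tau^{-a}P(x), \tau^{-b}P(x)$ from a single $\tau$-orbit (for $b>a$ one would have $\Ext^1_{kQ}(\tau^{-b}P(x),\tau^{-a}P(x))\neq 0$, contradicting condition $(1)$). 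Hence each of $T, T'$ has exactly one summand in each of the $n$ orbits, the common part $M$ occupies $n-1$ of them, and $X, Y$ both lie in the remaining orbit $i$; thus $X = \tau^{-r_i}P(i)$, $Y = \tau^{-r'_i}P(i)$ and $r'_j = r_j$ for all $j \neq i$. The non-split sequence gives $\Ext^1_{kQ}(Y, X) \neq 0$, so Proposition \ref{p} yields $r'_i > r_i$, i.e. $r'_i \ge r_i + 1$.

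It then remains to exclude $r'_i \ge r_i + 2$. For $r_i \le k \le r'_i$ I would set $T_k := M \oplus \tau^{-k}P(i)$, writing $M = \oplus_{j \neq i} \tau^{-r_j}P(j)$. The tilting conditions on $T$ give $r_j \le r_i + l_Q(i,j)$ and those on $T'$ give $r'_i \le r_j + l_Q(j,i)$ for each $j \neq i$; hence $r_j \le k + l_Q(i,j)$ (using $k \ge r_i$) and $k \le r_j + l_Q(j,i)$ (using $k \le r'_i$), so by Proposition \ref{p} all cross-extensions between $\tau^{-k}P(i)$ and the summands of $M$ vanish, while $\Ext^1_{kQ}(M,M)=0$ and $\Ext^1_{kQ}(\tau^{-k}P(i),\tau^{-k}P(i))=0$ hold automatically. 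Thus $|T_k| = n$ and $\Ext^1_{kQ}(T_k, T_k) = 0$, so each $T_k$ is a pre-projective tilting module. The same inequalities, read through Proposition \ref{p}, show $\Ext^1_{kQ}(T_k, T_{k+1}) = 0$, i.e. $T_k \ge T_{k+1}$, and since $T_k \neq T_{k+1}$ antisymmetry of $\le$ makes this strict; hence $T = T_{r_i} > T_{r_i+1} > \cdots > T_{r'_i} = T'$. If $r'_i \ge r_i + 2$, then $T_{r_i+1} \in \mathcal{T}_{\mathrm{p}}(Q)$ satisfies $T > T_{r_i+1} > T'$, contradicting that $T \to T'$ is a covering relation. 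Therefore $r'_i = r_i + 1$.

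The main obstacle is the middle step: checking that every interpolant $T_k$ is genuinely a tilting module and that consecutive ones are strictly comparable. This is exactly where the hypotheses that $T$ and $T'$ are tilting enter, via the two families of inequalities $r_j \le r_i + l_Q(i,j)$ and $r'_i \le r_j + l_Q(j,i)$; the role of Proposition \ref{p} is to convert each required Ext-vanishing into one of these already-available inequalities, so the verification is bookkeeping rather than new input. I would also confirm at the outset that $T$ and $T'$ really have one summand per $\tau$-orbit, since the indexing $T = \oplus_i \tau^{-r_i}P(i)$ in the statement silently assumes it.
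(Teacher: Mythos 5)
Your proposal is correct and takes essentially the same route as the paper: the paper likewise reduces to the one-indecomposable-exchange property of tilting-quiver arrows (cited there simply as ``from tilting theory'', which your orbit-uniqueness argument fills in explicitly) and then uses Proposition~\ref{p} to verify that $T^{''}:=\tau^{-r_{i}-1}P(i)\oplus(\oplus_{j\neq i}\tau^{-r_{j}}P(j))$ is a pre-projective tilting module satisfying $T>T^{''}\geq T^{'}$, so that the covering property forces $T^{''}=T^{'}$. Your interpolating chain $T_{k}$ is a mildly more elaborate version of this single interpolant, resting on exactly the same two families of inequalities.
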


\begin{proof}
From tilting theory there exists some $i\;$s.t.$\; r_{i}<r^{'}_{i}$ and $r_{j}=r^{'}_{j}\;(\forall j\neq i).$ Assume that $r^{'}_{i}=r_{i}+t.$
 Then proposition\;\ref{p} shows 
 \[r_{j}-l(i,j)\leq r_{i}<r_{i}+t\leq r_{j}+l(j,i)\;(\forall j\neq i),\]
 and this implies $T^{''}:=\tau^{-r_{i}-1}P(i)\oplus(\oplus_{j\neq i} \tau^{-r_{j}}P(j))\in \mathcal{T}_{\mathrm{p}}(Q)$. 
 Since $T>T^{''}\geq T^{'}$, we get $T^{'}=T^{''}$.
   
\end{proof}
   
For any quiver $Q$ satisfying the condition (b) of Theorem\;\ref{t}, 
put $L(Q):=\{(r_{i})_{i\in Q_{0}}\in \Z_{\geq 0}^{Q_{0}} \mid r_{j}\leq    r_{i}+ l_{Q}(i,j) \}\subset \Z^{Q_{0}}$.
 
Then as an immediate Corollary of proposition\;\ref{p}, we get the following.
 \begin{cor}
 \label{c}
Assume $Q$ satisfies the conditions $(b)$ of Theorem\;\ref{t}. Then 
\[(r_{i})_{i\in Q_{0}}\mapsto \oplus_{i=0}^{n-1}\tau^{-r_{i}}P(i)\]
induces an isomorphism of posets,
\[(L(Q),\leq^{op})\simeq (\mathcal{T}_{\mathrm{p}}(Q),\leq)\]
where  $(r_{i})_{i\in Q_{0}}\geq^{op} (r^{'}_{i})_{i\in Q_{0}}
\stackrel {\mathrm{def}} {\Leftrightarrow} r_{i}\leq r_{i}^{'}$ for any $i\in Q_{0}$.  

In particular $\overrightarrow{\mathcal{T}}_{\mathrm{p}}(Q)$ is a full sub-quiver of 
Hasse-quiver of $(\Z^{Q_{0}},\leq^{op})$.
\end{cor}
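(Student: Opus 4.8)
The plan is to read off both the bijection and the order structure directly from Proposition \ref{p}, together with the description of the pre-projective component as $\Z_{\leq 0}Q$. First I would verify the map is well defined with image in $\mathcal{T}_{\mathrm{p}}(Q)$. Fix $(r_i)_{i\in Q_0}\in L(Q)$ (so each $r_i\geq 0$) and set $T:=\bigoplus_{i=0}^{n-1}\tau^{-r_i}P(i)$. Since
\[\Ext^1_{kQ}(T,T)=\bigoplus_{i,j}\Ext^1_{kQ}(\tau^{-r_i}P(i),\tau^{-r_j}P(j)),\]
Proposition \ref{p} shows $\Ext^1_{kQ}(T,T)=0$ if and only if $r_i\leq r_j+l_Q(j,i)$ for all $i,j$, which after interchanging the names of $i$ and $j$ is exactly the defining inequality of $L(Q)$. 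Because the pre-projective component is $\Z_{\leq 0}Q$, distinct vertices give projectives $P(i)$ lying in distinct $\tau$-orbits, so the $n$ summands $\tau^{-r_i}P(i)$ are pairwise non-isomorphic; hence $|T|=n=\#Q_0$ and $T\in\mathcal{T}_{\mathrm{p}}(Q)$.

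Next I would establish bijectivity. Given $T'\in\mathcal{T}_{\mathrm{p}}(Q)$, each of its $n$ indecomposable summands is pre-projective, hence of the form $\tau^{-r}P(x)$ for a unique $x\in Q_0$ and $r\geq 0$. If two summands $\tau^{-r}P(x),\tau^{-s}P(x)$ shared an orbit, say $r<s$, then Proposition \ref{p} with $l_Q(x,x)=0$ would give $\Ext^1_{kQ}(\tau^{-s}P(x),\tau^{-r}P(x))\neq 0$, contradicting $\Ext^1_{kQ}(T',T')=0$. Thus the $n$ summands occupy $n$ distinct orbits, i.e.\ all of them, so $T'=\bigoplus_i\tau^{-r'_i}P(i)$ for a unique $(r'_i)\in\Z_{\geq 0}^{Q_0}$, which lies in $L(Q)$ by the computation above. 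This simultaneously gives surjectivity and injectivity.

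For the order statement I would compare $T\geq T'$ with $\geq^{op}$. For $T,T'$ corresponding to $(r_i),(r'_i)$, Proposition \ref{p} gives that $T\geq T'$ (that is, $\Ext^1_{kQ}(T,T')=0$) holds if and only if $r_i\leq r'_j+l_Q(j,i)$ for all $i,j$. Taking $j=i$ and using $l_Q(i,i)=0$ forces $r_i\leq r'_i$ for all $i$, i.e.\ $(r_i)\geq^{op}(r'_i)$; conversely, if $r_i\leq r'_i$ for all $i$ then $r_i\leq r'_i\leq r'_j+l_Q(j,i)$ since $(r'_i)\in L(Q)$. Hence the map is an isomorphism of posets $(L(Q),\leq^{op})\simeq(\mathcal{T}_{\mathrm{p}}(Q),\leq)$.

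Finally, for the ``in particular'' clause: by Lemma \ref{l2}, $\tiltq_{\mathrm{p}}(Q)$ is the Hasse-quiver of $(\mathcal{T}_{\mathrm{p}}(Q),\leq)\cong(L(Q),\leq^{op})$. The lemma immediately preceding the corollary shows every arrow (cover) in $L(Q)$ raises a single coordinate by $1$, which is precisely a cover in the ambient lattice $(\Z^{Q_0},\leq^{op})$; conversely any two elements of $L(Q)$ differing by $+1$ in one coordinate have nothing strictly between them already in $\Z^{Q_0}$, hence form a cover in $L(Q)$. Therefore the arrows of $\tiltq_{\mathrm{p}}(Q)$ are exactly the ambient Hasse-arrows with both endpoints in $L(Q)$, so $\tiltq_{\mathrm{p}}(Q)$ is a full sub-quiver of the Hasse-quiver of $(\Z^{Q_0},\leq^{op})$. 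I expect the only delicate points to be the index bookkeeping ($l_Q(i,j)$ versus $l_Q(j,i)$) when matching Ext-vanishing to the two inequalities, and correctly invoking the preceding lemma for the covering description; the substance is an essentially formal consequence of Proposition \ref{p}.
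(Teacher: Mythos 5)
Your proposal is correct and takes essentially the same route the paper intends: the paper states this corollary without proof, calling it an immediate consequence of Proposition~\ref{p}, and your argument — Ext-vanishing read off from Proposition~\ref{p}, distinctness of $\tau$-orbits of the $P(i)$ for bijectivity, and Lemma~\ref{l2} together with the preceding single-coordinate-increment lemma for the full-sub-quiver claim — is exactly the routine verification being left to the reader. No gaps.
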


\begin{prop}
\label{p2}
Let $i\in Q_{0}$ and $T(i):=\oplus_{j\in Q_{0}}\tau^{-l(i,j)}P(j)$.
Then $T(i)$ is a unique minimal element of $\mathrm{lk}_{\mathrm{p}}(P(i))$.
\end{prop}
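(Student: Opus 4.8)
The plan is to work through the poset isomorphism of Corollary \ref{c}, which translates the claim about $\mathrm{lk}_{\mathrm{p}}(P(i))$ into a purely combinatorial statement about the set $L(Q)$. Under that isomorphism, $P(i)$ corresponds to the constraint $r_i=0$, so that $\mathrm{lk}_{\mathrm{p}}(P(i))$ becomes the set $L_i(Q):=\{(r_j)_{j\in Q_0}\in L(Q)\mid r_i=0\}$, equipped with the restriction of $\leq^{\mathrm{op}}$. Since $\leq^{\mathrm{op}}$ reverses the componentwise order, a \emph{minimal} element of $\mathrm{lk}_{\mathrm{p}}(P(i))$ corresponds to a \emph{maximal} element of $(L_i(Q),\leq)$, that is, to a tuple $(r_j)$ in $L_i(Q)$with each coordinate as large as possible. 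The candidate is $r_j=l(i,j)$ for all $j$, which corresponds precisely to $T(i)=\bigoplus_j \tau^{-l(i,j)}P(j)$. So the proposition reduces to showing that $(l(i,j))_{j\in Q_0}$ is the unique componentwise-maximal element of $L_i(Q)$.

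First I would verify that $T(i)$ actually lies in $\mathrm{lk}_{\mathrm{p}}(P(i))$, i.e. that $(l(i,j))_j\in L(Q)$ and $r_i=l(i,i)=0$. The second is immediate from the convention $l_Q(i,i)=0$. For membership in $L(Q)$ I must check $l(i,k)\le l(i,j)+l_Q(j,k)$ for all $j,k$, which is just the triangle inequality for the length function $l_Q$ read off from paths in $\tilde Q$: concatenating a shortest path $i\to j$ with a shortest path $j\to k$ gives a path $i\to k$ whose $c^{+}$-value is $l(i,j)+l_Q(j,k)$, and $l(i,k)$ is the minimum over all such paths.

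Next comes the core step: showing maximality and uniqueness. Suppose $(r_j)_j\in L_i(Q)$ with $r_i=0$. The defining inequalities of $L(Q)$ applied with the index $i$ (using $r_i=0$) give $r_j\le r_i+l_Q(i,j)=l(i,j)$ for every $j$. Hence every element of $L_i(Q)$ is componentwise $\le (l(i,j))_j$, which simultaneously proves that $(l(i,j))_j$ dominates all of $L_i(Q)$ and that it is the unique maximum. Translating back through Corollary \ref{c}, $T(i)$ is $\le^{\mathrm{op}}$-below every tilting module in $\mathrm{lk}_{\mathrm{p}}(P(i))$, i.e. $T(i)$ is the unique minimal element in the order $\leq$ on $\mathrm{lk}_{\mathrm{p}}(P(i))$.

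I do not anticipate a serious obstacle; the whole argument is a short unwinding of definitions once Corollary \ref{c} is in hand, with the triangle inequality for $l_Q$ being the only genuine computation. The one point demanding care is bookkeeping with the order reversal: I must make sure that \emph{minimal} in $(\mathrm{lk}_{\mathrm{p}}(P(i)),\leq)$ corresponds to \emph{maximal} in $(L_i(Q),\leq)$, since an off-by-one confusion there would invert the statement. I would also make explicit that it is legitimate to read off the constraint $r_j\le l(i,j)$ directly from the $j,k=i$ (respectively $i,j$) inequalities in the definition of $L(Q)$, so that the single inequality $r_j\le r_i+l_Q(i,j)$ with $r_i=0$ does all the work.
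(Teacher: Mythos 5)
Your proposal is correct and follows essentially the same route as the paper: both establish membership of $T(i)$ in $\mathrm{lk}_{\mathrm{p}}(P(i))$ via the triangle inequality $l(i,j)\leq l(i,j')+l(j',j)$ for $l_Q$, and then deduce from the defining inequality $r_j\leq r_i+l_Q(i,j)$ with $r_i=0$ that every element of the link dominates $(l(i,j))_j$ componentwise, concluding via Corollary \ref{c}. Your extra care with the order reversal $\leq^{\mathrm{op}}$ is exactly the bookkeeping the paper leaves implicit.
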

\begin{proof}
Let $j,j^{'}\in Q_{0}$. By definition of $l_{Q}$, we get 
\[l(i,j)\leq l(i,j^{'})+l(j^{'},j),\]
and this implies $T(i)\in \mathrm{lk}_{\mathrm{p}}(P(i))$.

Now let $T:=\oplus_{j\in Q_{0}} \tau^{-r_{j}}P(j)\in \mathrm{lk}_{\mathrm{p}}(P(i))$. Then $r_{j}\leq r_{i}+l(i,j)=l(i,j)$. 
So Corollary\;\ref{c} shows $T\geq T(i)$. 
\end{proof}

Now we can show the Theorem\;\ref{t}.
\begin{proof}
(1)\;This is followed by proposition\;\ref{p}.\\
  (2)\;It is obvious that $\tau^{-r}$ induces an injection 
 \[\overrightarrow{\mathrm{lk}}_{\mathrm{p}}(P(n-1))\rightarrow \overrightarrow{\mathrm{lk}}_{\mathrm{p}}(\tau^{-r}P(n-1))\]
 as a quiver. So it is sufficient to show 
 \[\tau^{-r}:\mathrm{lk}_{\mathrm{p}}(P(n-1))\rightarrow 
 \mathrm{lk}_{\mathrm{p}}(\tau^{-r}P(n-1))\]
 is surjective.
 
 Let $T \in \mathrm{lk}_{\mathrm{p}}(\tau^{-r}P(n-1))$ and by Corollary\;\ref{c} we can
 put $T=\oplus_{i=0}^{n-2}\tau^{-r_{i}}P(i)\oplus \tau^{-r}P(n-1)$.
 Since $l(i,n-1)=0$ $(\forall i)$, proposition\;\ref{p} shows 
 $r_{i}\geq r$ $(\forall i)$. So $\tau^{r} T\in \mathrm{lk}_{\mathrm{p}}(P(n-1))$.\\
 (3)\;For any $i\in Q_{0}$ put $t(i):=\mathrm{max}\{j\in Q_{0}\mid j\rightarrow i\}$. Let $T=\oplus_{i=0}^{n-1}\tau^{-r_{i}}P(i).$ 
Then $T\in \mathrm{lk}_{\mathrm{p}}(P(n-1))$ only if 
 \[ r_{n-1}=0,r_{n-2}\in\{0,1\},\cdots , r_{i}\in\{r_{t(i)},r_{t(i)}+1\},\cdots, r_{0}\in \{r_{t(0)},r_{t(0)}+1\}.\]
 This implies $\#Q_{0}\leq 2^{n-1}$.\\
   (4)\;Let $\mathcal{C}$ be a connected component of $\overrightarrow{\mathrm{lk}}_{\mathrm{p}}(P(n-1))$ containing $A=kQ$.
   If $T\in \mathcal{T}_{\mathrm{p}}\setminus \mathcal{C}$, then there is an
  infinite sequence
\[A=T_{0}\rightarrow T_{1}\rightarrow T_{2}\rightarrow \cdots\]
 in $\mathcal{C}$ (cf.\cite{HU2}), and this is a contradiction.\\  
(5)\;Let $T=\oplus_{i\in Q_{0}}\tau^{-r_{i}}P(i)\in \mathrm{lk}_{\mathrm{p}}(\tau^{-r}P(n-1))$ and $T^{'}=
\oplus_{i\in Q_{0}}\tau^{-r^{'}_{i}}P(i)\in \mathrm{lk}_{\mathrm{p}}(\tau^{-r+t}P(n-1))$.
If $T\rightarrow T^{'}$ in $\overrightarrow{\mathcal{T}}_{\mathrm{p}}(Q)$\;(i.e.\;in $\overrightarrow{\mathcal{T}}(Q)$) 
and $t\neq 0$, then $r_{i}=r^{'}_{i}$\;$(\forall i\in Q_{0}\setminus \{n-1\}$), 
$r_{n-1}\leq r^{'}_{n-1}=r_{n-1}+t$ and $r_{n-1}+t=r^{'}_{n-1}\leq r^{'}_{n-2}=r_{n-2}\leq r_{n-1}+l(n-1,n-2)=r_{n-1}+1$. 
This implies $t=1$.\\
(6)\;Put $T:=(\oplus_{i\leq n-2}\tau^{-r}P(i))\oplus \tau^{-r+1}P(n-1)$ and $T^{'}:=\oplus_{i\leq n-1}\tau^{-r}P(i)$. Then Corollary\;\ref{c} shows 
$T\in \mathrm{lk}_{\mathrm{p}}(\tau^{-r+1}P(n-1))$ and $T^{'}\in \mathrm{lk}_{\mathrm{p}}(\tau^{-r}P(n-1))$ with $T\rightarrow T^{'}$. Now $(4)$ of this Theorem implies 
$\overrightarrow{\mathcal{T}}_{\mathrm{p}}(Q)$ is connected.      
\end{proof}
\begin{exmp}
We give the two examples. Recall that for any two elements $(r_{i})$,   $(r^{'}_{i})$ of $\Z^{n}$ $(n\geq 1)$, $(r_{i})\geq^{op} (r^{'}_{i})$
means $r_{i}\leq r_{i}^{'}$ for any $i$.
   
(1):Consider the following quiver
\[
\unitlength 0.1in
\begin{picture}( 21.0200,  9.3000)(  1.7000,-12.7000)
\put(3.5000,-8.0000){\makebox(0,0){$Q=$}}%
%
\special{pn 8}%
\special{ar 600 800 30 30  0.0000000 6.2831853}%
%
\special{pn 8}%
\special{pa 660 810}%
\special{pa 940 1140}%
\special{fp}%
\special{sh 1}%
\special{pa 940 1140}%
\special{pa 912 1076}%
\special{pa 906 1100}%
\special{pa 882 1102}%
\special{pa 940 1140}%
\special{fp}%
%
\special{pn 8}%
\special{ar 1430 600 30 30  0.0000000 6.2831853}%
%
\special{pn 8}%
\special{ar 990 1180 30 30  0.0000000 6.2831853}%
%
\special{pn 8}%
\special{pa 1060 1180}%
\special{pa 1270 1180}%
\special{fp}%
\special{sh 1}%
\special{pa 1270 1180}%
\special{pa 1204 1160}%
\special{pa 1218 1180}%
\special{pa 1204 1200}%
\special{pa 1270 1180}%
\special{fp}%
%
\special{pn 8}%
\special{sh 1}%
\special{ar 1320 1180 10 10 0  6.28318530717959E+0000}%
\special{sh 1}%
\special{ar 1420 1180 10 10 0  6.28318530717959E+0000}%
\special{sh 1}%
\special{ar 1520 1180 10 10 0  6.28318530717959E+0000}%
\special{sh 1}%
\special{ar 1520 1180 10 10 0  6.28318530717959E+0000}%
\special{sh 1}%
\special{ar 1520 1180 10 10 0  6.28318530717959E+0000}%
%
\special{pn 8}%
\special{pa 1600 1180}%
\special{pa 1790 1180}%
\special{fp}%
\special{sh 1}%
\special{pa 1790 1180}%
\special{pa 1724 1160}%
\special{pa 1738 1180}%
\special{pa 1724 1200}%
\special{pa 1790 1180}%
\special{fp}%
%
\special{pn 8}%
\special{ar 1870 1180 30 30  0.0000000 6.2831853}%
%
\special{pn 8}%
\special{pa 1930 1150}%
\special{pa 2210 850}%
\special{fp}%
\special{sh 1}%
\special{pa 2210 850}%
\special{pa 2150 886}%
\special{pa 2174 890}%
\special{pa 2180 912}%
\special{pa 2210 850}%
\special{fp}%
%
\special{pn 8}%
\special{ar 2240 800 32 32  0.0000000 6.2831853}%
%
\special{pn 8}%
\special{pa 660 800}%
\special{pa 1340 620}%
\special{fp}%
\special{sh 1}%
\special{pa 1340 620}%
\special{pa 1270 618}%
\special{pa 1288 634}%
\special{pa 1282 656}%
\special{pa 1340 620}%
\special{fp}%
%
\special{pn 8}%
\special{pa 1520 620}%
\special{pa 2170 780}%
\special{fp}%
\special{sh 1}%
\special{pa 2170 780}%
\special{pa 2110 746}%
\special{pa 2118 768}%
\special{pa 2100 784}%
\special{pa 2170 780}%
\special{fp}%
\put(21.9000,-7.3000){\makebox(0,0)[lb]{$0$}}%
\put(18.5000,-14.4000){\makebox(0,0)[lb]{$1$}}%
\put(7.8000,-14.4000){\makebox(0,0)[lb]{$n-3$}}%
\put(12.1000,-5.1000){\makebox(0,0)[lb]{$n-2$}}%
\put(4.2000,-6.8000){\makebox(0,0)[lb]{$n-1$}}%
\end{picture}%
\]
Let\\
$L_{0}=\{(0,0,0\cdots 0,0,0),(1,0,0\cdots 0,0,0),(1,1,0\cdots 0,0,0),\cdots, (1,1,1\cdots 1,0,0)\}$\\
$L_{1}=\{(1,0,0\cdots 0,1,0),(1,1,0\cdots 0,1,0),(1,1,1\cdots 0,1,0),\cdots, (1,1,1\cdots 1,1,0)\}$\\
$L_{2}=\{(2,1,0\cdots 0,1,0),(2,1,1\cdots 0,1,0),\cdots, (2,1,1\cdots 1,1,0)\}$\\
$L_{3}=\{(2,2,1,0\cdots 0,1,0),\cdots, (2,2,1\cdots 1,1,0)\}$

$\vdots$\\
$L_{n-3}=\{(2,2,2\cdots 2,1,1,0)\}$

and
\[T(a,b)=\left\{\begin{array}{ll}
\text{b-th\ elment\ of}\ L_{a} &0\leq a\leq n-3,\ 1\leq b\leq n-1-a \\ 
T(b-n+a,n-b)+(1,\cdots 1) & 0\leq a\leq n-3,\ n-1-a<b\leq n-1 \\ 
T(x,b)+(2r,2r,\cdots,2r) & a=x+(n-2)r\ (0\leq x<n-2),1\leq b\leq n-1.\  
\end{array}\right. \] 
Now we get
\[(\amalg L_{a},\leq^{op})\simeq (\mathrm{lk}_{\mathrm{p}}(P(n-1)),\leq)\]
and
\[(L(Q),\leq^{op})=(\{T(a,b)\mid a\in \Z_{\geq 0}, 1\leq b\leq n-1\},\leq^{op})
\simeq (\Z_{\geq 0}\times \{1,\cdots, n-1\},\leq^{op}). \]
In particular $\overrightarrow{\mathcal{T}}_{\mathrm{p}}(Q)=\Z_{\geq 0}\overrightarrow{A}_{n-1}$. 
So, in the case $n=4$, $\overrightarrow{\mathcal{T}}_{\mathrm{p}}(Q)$ is given by following 
\[
\unitlength 0.1in
\begin{picture}( 24.3700, 22.1900)(  4.0300,-26.1000)
%
\special{pn 8}%
\special{ar 1800 600 20 20  0.0000000 6.2831853}%
%
\special{pn 8}%
\special{pa 1770 620}%
\special{pa 1440 780}%
\special{fp}%
\special{sh 1}%
\special{pa 1440 780}%
\special{pa 1510 770}%
\special{pa 1488 758}%
\special{pa 1492 734}%
\special{pa 1440 780}%
\special{fp}%
%
\special{pn 8}%
\special{ar 1400 810 20 20  0.0000000 6.2831853}%
%
\special{pn 8}%
\special{pa 1370 830}%
\special{pa 1040 990}%
\special{fp}%
\special{sh 1}%
\special{pa 1040 990}%
\special{pa 1110 980}%
\special{pa 1088 968}%
\special{pa 1092 944}%
\special{pa 1040 990}%
\special{fp}%
%
\special{pn 8}%
\special{ar 1000 1000 20 20  0.0000000 6.2831853}%
%
\special{pn 8}%
\special{ar 1800 1000 20 20  0.0000000 6.2831853}%
%
\special{pn 8}%
\special{pa 1770 1020}%
\special{pa 1440 1180}%
\special{fp}%
\special{sh 1}%
\special{pa 1440 1180}%
\special{pa 1510 1170}%
\special{pa 1488 1158}%
\special{pa 1492 1134}%
\special{pa 1440 1180}%
\special{fp}%
%
\special{pn 8}%
\special{ar 1400 1210 20 20  0.0000000 6.2831853}%
%
\special{pn 8}%
\special{pa 1370 1230}%
\special{pa 1040 1390}%
\special{fp}%
\special{sh 1}%
\special{pa 1040 1390}%
\special{pa 1110 1380}%
\special{pa 1088 1368}%
\special{pa 1092 1344}%
\special{pa 1040 1390}%
\special{fp}%
%
\special{pn 8}%
\special{ar 1000 1400 20 20  0.0000000 6.2831853}%
%
\special{pn 8}%
\special{ar 1800 1400 20 20  0.0000000 6.2831853}%
%
\special{pn 8}%
\special{pa 1770 1420}%
\special{pa 1440 1580}%
\special{fp}%
\special{sh 1}%
\special{pa 1440 1580}%
\special{pa 1510 1570}%
\special{pa 1488 1558}%
\special{pa 1492 1534}%
\special{pa 1440 1580}%
\special{fp}%
%
\special{pn 8}%
\special{ar 1400 1610 20 20  0.0000000 6.2831853}%
%
\special{pn 8}%
\special{pa 1370 1630}%
\special{pa 1040 1790}%
\special{fp}%
\special{sh 1}%
\special{pa 1040 1790}%
\special{pa 1110 1780}%
\special{pa 1088 1768}%
\special{pa 1092 1744}%
\special{pa 1040 1790}%
\special{fp}%
%
\special{pn 8}%
\special{ar 1000 1800 20 20  0.0000000 6.2831853}%
%
\special{pn 8}%
\special{ar 1800 1800 20 20  0.0000000 6.2831853}%
%
\special{pn 8}%
\special{pa 1770 1820}%
\special{pa 1440 1980}%
\special{fp}%
\special{sh 1}%
\special{pa 1440 1980}%
\special{pa 1510 1970}%
\special{pa 1488 1958}%
\special{pa 1492 1934}%
\special{pa 1440 1980}%
\special{fp}%
%
\special{pn 8}%
\special{ar 1400 2010 20 20  0.0000000 6.2831853}%
%
\special{pn 8}%
\special{pa 1370 2030}%
\special{pa 1040 2190}%
\special{fp}%
\special{sh 1}%
\special{pa 1040 2190}%
\special{pa 1110 2180}%
\special{pa 1088 2168}%
\special{pa 1092 2144}%
\special{pa 1040 2190}%
\special{fp}%
%
\special{pn 8}%
\special{ar 1000 2200 20 20  0.0000000 6.2831853}%
%
\special{pn 8}%
\special{pa 1430 830}%
\special{pa 1760 980}%
\special{fp}%
\special{sh 1}%
\special{pa 1760 980}%
\special{pa 1708 934}%
\special{pa 1712 958}%
\special{pa 1692 972}%
\special{pa 1760 980}%
\special{fp}%
%
\special{pn 8}%
\special{pa 1050 1030}%
\special{pa 1380 1180}%
\special{fp}%
\special{sh 1}%
\special{pa 1380 1180}%
\special{pa 1328 1134}%
\special{pa 1332 1158}%
\special{pa 1312 1172}%
\special{pa 1380 1180}%
\special{fp}%
%
\special{pn 8}%
\special{pa 1450 1230}%
\special{pa 1780 1380}%
\special{fp}%
\special{sh 1}%
\special{pa 1780 1380}%
\special{pa 1728 1334}%
\special{pa 1732 1358}%
\special{pa 1712 1372}%
\special{pa 1780 1380}%
\special{fp}%
%
\special{pn 8}%
\special{pa 1050 1430}%
\special{pa 1380 1580}%
\special{fp}%
\special{sh 1}%
\special{pa 1380 1580}%
\special{pa 1328 1534}%
\special{pa 1332 1558}%
\special{pa 1312 1572}%
\special{pa 1380 1580}%
\special{fp}%
%
\special{pn 8}%
\special{pa 1450 1630}%
\special{pa 1780 1780}%
\special{fp}%
\special{sh 1}%
\special{pa 1780 1780}%
\special{pa 1728 1734}%
\special{pa 1732 1758}%
\special{pa 1712 1772}%
\special{pa 1780 1780}%
\special{fp}%
%
\special{pn 8}%
\special{pa 1040 1830}%
\special{pa 1370 1980}%
\special{fp}%
\special{sh 1}%
\special{pa 1370 1980}%
\special{pa 1318 1934}%
\special{pa 1322 1958}%
\special{pa 1302 1972}%
\special{pa 1370 1980}%
\special{fp}%
%
\special{pn 8}%
\special{pa 1460 2030}%
\special{pa 1790 2180}%
\special{fp}%
\special{sh 1}%
\special{pa 1790 2180}%
\special{pa 1738 2134}%
\special{pa 1742 2158}%
\special{pa 1722 2172}%
\special{pa 1790 2180}%
\special{fp}%
%
\special{pn 8}%
\special{ar 1810 2200 20 20  0.0000000 6.2831853}%
%
\special{pn 8}%
\special{pa 1040 2220}%
\special{pa 1360 2380}%
\special{fp}%
\special{sh 1}%
\special{pa 1360 2380}%
\special{pa 1310 2332}%
\special{pa 1312 2356}%
\special{pa 1292 2368}%
\special{pa 1360 2380}%
\special{fp}%
%
\special{pn 8}%
\special{pa 1770 2210}%
\special{pa 1440 2380}%
\special{fp}%
\special{sh 1}%
\special{pa 1440 2380}%
\special{pa 1508 2368}%
\special{pa 1488 2356}%
\special{pa 1490 2332}%
\special{pa 1440 2380}%
\special{fp}%
%
\special{pn 8}%
\special{sh 1}%
\special{ar 1410 2400 10 10 0  6.28318530717959E+0000}%
\special{sh 1}%
\special{ar 1410 2500 10 10 0  6.28318530717959E+0000}%
\special{sh 1}%
\special{ar 1410 2610 10 10 0  6.28318530717959E+0000}%
%
\special{pn 8}%
\special{pa 2280 1250}%
\special{pa 2264 1278}%
\special{pa 2246 1306}%
\special{pa 2226 1330}%
\special{pa 2202 1352}%
\special{pa 2178 1370}%
\special{pa 2150 1388}%
\special{pa 2122 1404}%
\special{pa 2094 1418}%
\special{pa 2064 1428}%
\special{pa 2032 1438}%
\special{pa 2002 1446}%
\special{pa 1970 1452}%
\special{pa 1938 1456}%
\special{pa 1906 1458}%
\special{pa 1874 1460}%
\special{pa 1842 1460}%
\special{pa 1810 1460}%
\special{pa 1778 1458}%
\special{pa 1746 1454}%
\special{pa 1714 1450}%
\special{pa 1684 1444}%
\special{pa 1652 1438}%
\special{pa 1620 1432}%
\special{pa 1590 1424}%
\special{pa 1558 1414}%
\special{pa 1528 1406}%
\special{pa 1498 1394}%
\special{pa 1468 1384}%
\special{pa 1438 1372}%
\special{pa 1410 1358}%
\special{pa 1380 1344}%
\special{pa 1352 1330}%
\special{pa 1324 1316}%
\special{pa 1296 1300}%
\special{pa 1268 1282}%
\special{pa 1242 1266}%
\special{pa 1214 1248}%
\special{pa 1190 1228}%
\special{pa 1164 1210}%
\special{pa 1138 1188}%
\special{pa 1114 1168}%
\special{pa 1090 1146}%
\special{pa 1068 1124}%
\special{pa 1046 1100}%
\special{pa 1024 1076}%
\special{pa 1006 1052}%
\special{pa 986 1026}%
\special{pa 968 998}%
\special{pa 952 972}%
\special{pa 936 944}%
\special{pa 920 916}%
\special{pa 908 886}%
\special{pa 898 856}%
\special{pa 890 824}%
\special{pa 882 794}%
\special{pa 878 762}%
\special{pa 876 730}%
\special{pa 878 698}%
\special{pa 884 666}%
\special{pa 892 636}%
\special{pa 904 606}%
\special{pa 920 578}%
\special{pa 936 550}%
\special{pa 958 526}%
\special{pa 980 502}%
\special{pa 1006 482}%
\special{pa 1032 466}%
\special{pa 1060 450}%
\special{pa 1088 436}%
\special{pa 1118 424}%
\special{pa 1150 416}%
\special{pa 1180 408}%
\special{pa 1212 402}%
\special{pa 1244 398}%
\special{pa 1276 394}%
\special{pa 1308 392}%
\special{pa 1340 392}%
\special{pa 1372 392}%
\special{pa 1404 394}%
\special{pa 1436 396}%
\special{pa 1468 400}%
\special{pa 1498 406}%
\special{pa 1530 412}%
\special{pa 1562 418}%
\special{pa 1592 426}%
\special{pa 1624 436}%
\special{pa 1654 446}%
\special{pa 1684 456}%
\special{pa 1714 466}%
\special{pa 1744 478}%
\special{pa 1774 490}%
\special{pa 1802 504}%
\special{pa 1830 518}%
\special{pa 1858 534}%
\special{pa 1886 550}%
\special{pa 1914 566}%
\special{pa 1942 584}%
\special{pa 1968 600}%
\special{pa 1994 620}%
\special{pa 2020 638}%
\special{pa 2044 658}%
\special{pa 2068 680}%
\special{pa 2092 702}%
\special{pa 2114 724}%
\special{pa 2136 748}%
\special{pa 2158 772}%
\special{pa 2178 796}%
\special{pa 2198 822}%
\special{pa 2216 848}%
\special{pa 2234 874}%
\special{pa 2250 902}%
\special{pa 2264 932}%
\special{pa 2276 960}%
\special{pa 2288 990}%
\special{pa 2296 1022}%
\special{pa 2302 1052}%
\special{pa 2308 1084}%
\special{pa 2308 1116}%
\special{pa 2308 1148}%
\special{pa 2304 1180}%
\special{pa 2294 1212}%
\special{pa 2284 1242}%
\special{pa 2280 1250}%
\special{sp}%
%
\special{pn 8}%
\special{pa 2740 840}%
\special{pa 2330 950}%
\special{fp}%
\special{sh 1}%
\special{pa 2330 950}%
\special{pa 2400 952}%
\special{pa 2382 936}%
\special{pa 2390 914}%
\special{pa 2330 950}%
\special{fp}%
\put(28.4000,-8.8000){\makebox(0,0)[lb]{$\vec{\mathrm{lk}}_{p.p}(P(3))$}}%
%
\special{pn 8}%
\special{pa 420 1520}%
\special{pa 434 1492}%
\special{pa 450 1464}%
\special{pa 468 1438}%
\special{pa 490 1414}%
\special{pa 512 1392}%
\special{pa 538 1370}%
\special{pa 564 1352}%
\special{pa 592 1336}%
\special{pa 620 1322}%
\special{pa 650 1310}%
\special{pa 680 1300}%
\special{pa 710 1292}%
\special{pa 742 1284}%
\special{pa 774 1278}%
\special{pa 806 1274}%
\special{pa 838 1270}%
\special{pa 870 1268}%
\special{pa 902 1266}%
\special{pa 934 1266}%
\special{pa 964 1268}%
\special{pa 996 1270}%
\special{pa 1028 1274}%
\special{pa 1060 1278}%
\special{pa 1092 1282}%
\special{pa 1124 1288}%
\special{pa 1156 1294}%
\special{pa 1186 1302}%
\special{pa 1216 1310}%
\special{pa 1248 1320}%
\special{pa 1278 1330}%
\special{pa 1308 1342}%
\special{pa 1338 1352}%
\special{pa 1368 1366}%
\special{pa 1396 1378}%
\special{pa 1426 1392}%
\special{pa 1454 1406}%
\special{pa 1482 1422}%
\special{pa 1510 1438}%
\special{pa 1536 1454}%
\special{pa 1564 1472}%
\special{pa 1590 1492}%
\special{pa 1616 1510}%
\special{pa 1640 1530}%
\special{pa 1664 1552}%
\special{pa 1688 1574}%
\special{pa 1710 1596}%
\special{pa 1730 1620}%
\special{pa 1750 1646}%
\special{pa 1770 1672}%
\special{pa 1788 1698}%
\special{pa 1806 1724}%
\special{pa 1822 1752}%
\special{pa 1836 1780}%
\special{pa 1848 1810}%
\special{pa 1856 1842}%
\special{pa 1864 1872}%
\special{pa 1868 1904}%
\special{pa 1870 1936}%
\special{pa 1868 1968}%
\special{pa 1862 2000}%
\special{pa 1852 2030}%
\special{pa 1840 2060}%
\special{pa 1826 2088}%
\special{pa 1806 2114}%
\special{pa 1786 2140}%
\special{pa 1764 2162}%
\special{pa 1738 2182}%
\special{pa 1712 2200}%
\special{pa 1684 2216}%
\special{pa 1656 2230}%
\special{pa 1626 2242}%
\special{pa 1596 2254}%
\special{pa 1566 2262}%
\special{pa 1534 2270}%
\special{pa 1502 2276}%
\special{pa 1470 2280}%
\special{pa 1440 2284}%
\special{pa 1408 2288}%
\special{pa 1376 2290}%
\special{pa 1344 2290}%
\special{pa 1312 2290}%
\special{pa 1280 2288}%
\special{pa 1248 2284}%
\special{pa 1216 2280}%
\special{pa 1184 2274}%
\special{pa 1152 2268}%
\special{pa 1122 2262}%
\special{pa 1090 2254}%
\special{pa 1060 2246}%
\special{pa 1028 2236}%
\special{pa 998 2226}%
\special{pa 968 2216}%
\special{pa 938 2204}%
\special{pa 908 2192}%
\special{pa 880 2178}%
\special{pa 850 2166}%
\special{pa 822 2152}%
\special{pa 794 2136}%
\special{pa 766 2120}%
\special{pa 738 2104}%
\special{pa 712 2086}%
\special{pa 686 2068}%
\special{pa 660 2048}%
\special{pa 636 2028}%
\special{pa 612 2006}%
\special{pa 588 1984}%
\special{pa 566 1962}%
\special{pa 544 1938}%
\special{pa 522 1914}%
\special{pa 502 1890}%
\special{pa 484 1864}%
\special{pa 468 1836}%
\special{pa 452 1808}%
\special{pa 438 1780}%
\special{pa 426 1750}%
\special{pa 418 1718}%
\special{pa 410 1688}%
\special{pa 406 1656}%
\special{pa 404 1624}%
\special{pa 404 1592}%
\special{pa 410 1560}%
\special{pa 418 1528}%
\special{pa 420 1520}%
\special{sp}%
%
\special{pn 8}%
\special{pa 2300 1790}%
\special{pa 1950 1900}%
\special{fp}%
\special{sh 1}%
\special{pa 1950 1900}%
\special{pa 2020 1900}%
\special{pa 2002 1884}%
\special{pa 2008 1862}%
\special{pa 1950 1900}%
\special{fp}%
\put(24.2000,-18.3000){\makebox(0,0)[lb]{$\vec{\mathrm{lk}}_{p.p}(\tau^{-1}P(3))$}}%
\end{picture}%
\]

(2):Consider the following quiver
\[
\unitlength 0.1in
\begin{picture}( 10.0500,  6.5500)(  4.3000,-12.9000)
\put(6.1000,-9.9000){\makebox(0,0){$Q=$}}%
%
\special{pn 8}%
\special{ar 910 980 30 30  0.0000000 6.2831853}%
%
\special{pn 8}%
\special{pa 1000 960}%
\special{pa 1250 770}%
\special{fp}%
\special{sh 1}%
\special{pa 1250 770}%
\special{pa 1186 794}%
\special{pa 1208 802}%
\special{pa 1210 826}%
\special{pa 1250 770}%
\special{fp}%
%
\special{pn 8}%
\special{pa 970 930}%
\special{pa 1220 730}%
\special{fp}%
\special{sh 1}%
\special{pa 1220 730}%
\special{pa 1156 756}%
\special{pa 1178 764}%
\special{pa 1180 788}%
\special{pa 1220 730}%
\special{fp}%
%
\special{pn 8}%
\special{sh 1}%
\special{ar 1180 900 10 10 0  6.28318530717959E+0000}%
\special{sh 1}%
\special{ar 1180 980 10 10 0  6.28318530717959E+0000}%
\special{sh 1}%
\special{ar 1180 980 10 10 0  6.28318530717959E+0000}%
\special{sh 1}%
\special{ar 1180 980 10 10 0  6.28318530717959E+0000}%
%
\special{pn 8}%
\special{sh 1}%
\special{ar 1180 1070 10 10 0  6.28318530717959E+0000}%
%
\special{pn 8}%
\special{pa 970 1020}%
\special{pa 1210 1220}%
\special{fp}%
\special{sh 1}%
\special{pa 1210 1220}%
\special{pa 1172 1162}%
\special{pa 1170 1186}%
\special{pa 1146 1194}%
\special{pa 1210 1220}%
\special{fp}%
%
\special{pn 8}%
\special{pa 950 1060}%
\special{pa 1180 1250}%
\special{fp}%
\special{sh 1}%
\special{pa 1180 1250}%
\special{pa 1142 1192}%
\special{pa 1140 1216}%
\special{pa 1116 1224}%
\special{pa 1180 1250}%
\special{fp}%
%
\special{pn 8}%
\special{ar 1320 710 30 30  0.0000000 6.2831853}%
%
\special{pn 8}%
\special{sh 1}%
\special{ar 1320 900 10 10 0  6.28318530717959E+0000}%
\special{sh 1}%
\special{ar 1320 980 10 10 0  6.28318530717959E+0000}%
\special{sh 1}%
\special{ar 1320 1060 10 10 0  6.28318530717959E+0000}%
%
\special{pn 8}%
\special{ar 1320 1260 30 30  0.0000000 6.2831853}%
\put(15.7000,-7.2000){\makebox(0,0){$0$}}%
\put(15.9000,-12.4000){\makebox(0,0){$n-2$}}%
\put(9.0000,-7.6000){\makebox(0,0){$n-1$}}%
\end{picture}%
\]

It is easy to see that
\[(\{0,1\}^{n-1},\leq^{op})\simeq (\mathrm{lk}_{\mathrm{p}}(P(n-1)),\leq)\]
and so an underlying graph of $\overrightarrow{\mathrm{lk}}_{\mathrm{p}}(P(n-1))$ is 
isomorphic to (n-1)-dimensional cube. In the case $n=4$, $\overrightarrow{\mathcal{T}}_{\mathrm{p}}(Q)$ is given by following,
\[
\unitlength 0.1in
\begin{picture}( 20.9100, 35.3400)(  5.9900,-37.7300)
%
\special{pn 8}%
\special{ar 1200 400 30 30  0.0000000 6.2831853}%
%
\special{pn 8}%
\special{pa 1160 430}%
\special{pa 850 570}%
\special{fp}%
\special{sh 1}%
\special{pa 850 570}%
\special{pa 920 562}%
\special{pa 900 548}%
\special{pa 904 524}%
\special{pa 850 570}%
\special{fp}%
%
\special{pn 8}%
\special{ar 800 600 30 30  0.0000000 6.2831853}%
%
\special{pn 8}%
\special{ar 1800 600 30 30  0.0000000 6.2831853}%
%
\special{pn 8}%
\special{ar 1400 800 30 30  0.0000000 6.2831853}%
%
\special{pn 8}%
\special{ar 1200 1200 30 30  0.0000000 6.2831853}%
%
\special{pn 8}%
\special{ar 800 1400 30 30  0.0000000 6.2831853}%
%
\special{pn 8}%
\special{ar 1400 1600 30 30  0.0000000 6.2831853}%
%
\special{pn 8}%
\special{ar 1800 1400 30 30  0.0000000 6.2831853}%
%
\special{pn 8}%
\special{pa 850 630}%
\special{pa 1350 790}%
\special{fp}%
\special{sh 1}%
\special{pa 1350 790}%
\special{pa 1294 752}%
\special{pa 1300 774}%
\special{pa 1280 790}%
\special{pa 1350 790}%
\special{fp}%
%
\special{pn 8}%
\special{pa 1250 410}%
\special{pa 1740 570}%
\special{fp}%
\special{sh 1}%
\special{pa 1740 570}%
\special{pa 1684 530}%
\special{pa 1690 554}%
\special{pa 1670 568}%
\special{pa 1740 570}%
\special{fp}%
%
\special{pn 8}%
\special{pa 1770 620}%
\special{pa 1440 780}%
\special{fp}%
\special{sh 1}%
\special{pa 1440 780}%
\special{pa 1510 770}%
\special{pa 1488 758}%
\special{pa 1492 734}%
\special{pa 1440 780}%
\special{fp}%
%
\special{pn 8}%
\special{pa 1200 450}%
\special{pa 1200 1150}%
\special{fp}%
\special{sh 1}%
\special{pa 1200 1150}%
\special{pa 1220 1084}%
\special{pa 1200 1098}%
\special{pa 1180 1084}%
\special{pa 1200 1150}%
\special{fp}%
%
\special{pn 8}%
\special{pa 800 650}%
\special{pa 800 1340}%
\special{fp}%
\special{sh 1}%
\special{pa 800 1340}%
\special{pa 820 1274}%
\special{pa 800 1288}%
\special{pa 780 1274}%
\special{pa 800 1340}%
\special{fp}%
%
\special{pn 8}%
\special{pa 1800 640}%
\special{pa 1800 1350}%
\special{fp}%
\special{sh 1}%
\special{pa 1800 1350}%
\special{pa 1820 1284}%
\special{pa 1800 1298}%
\special{pa 1780 1284}%
\special{pa 1800 1350}%
\special{fp}%
%
\special{pn 8}%
\special{pa 1240 1220}%
\special{pa 1750 1390}%
\special{fp}%
\special{sh 1}%
\special{pa 1750 1390}%
\special{pa 1694 1350}%
\special{pa 1700 1374}%
\special{pa 1680 1388}%
\special{pa 1750 1390}%
\special{fp}%
%
\special{pn 8}%
\special{pa 1170 1230}%
\special{pa 850 1380}%
\special{fp}%
\special{sh 1}%
\special{pa 850 1380}%
\special{pa 920 1370}%
\special{pa 898 1358}%
\special{pa 902 1334}%
\special{pa 850 1380}%
\special{fp}%
%
\special{pn 8}%
\special{pa 850 1430}%
\special{pa 1340 1600}%
\special{fp}%
\special{sh 1}%
\special{pa 1340 1600}%
\special{pa 1284 1560}%
\special{pa 1290 1584}%
\special{pa 1270 1598}%
\special{pa 1340 1600}%
\special{fp}%
%
\special{pn 8}%
\special{pa 1770 1430}%
\special{pa 1450 1590}%
\special{fp}%
\special{sh 1}%
\special{pa 1450 1590}%
\special{pa 1520 1578}%
\special{pa 1498 1566}%
\special{pa 1502 1542}%
\special{pa 1450 1590}%
\special{fp}%
%
\special{pn 8}%
\special{pa 1400 840}%
\special{pa 1400 1550}%
\special{fp}%
\special{sh 1}%
\special{pa 1400 1550}%
\special{pa 1420 1484}%
\special{pa 1400 1498}%
\special{pa 1380 1484}%
\special{pa 1400 1550}%
\special{fp}%
%
\special{pn 8}%
\special{ar 1400 1940 30 30  0.0000000 6.2831853}%
%
\special{pn 8}%
\special{pa 1360 1970}%
\special{pa 1050 2110}%
\special{fp}%
\special{sh 1}%
\special{pa 1050 2110}%
\special{pa 1120 2102}%
\special{pa 1100 2088}%
\special{pa 1104 2064}%
\special{pa 1050 2110}%
\special{fp}%
%
\special{pn 8}%
\special{ar 1000 2140 30 30  0.0000000 6.2831853}%
%
\special{pn 8}%
\special{ar 2000 2140 30 30  0.0000000 6.2831853}%
%
\special{pn 8}%
\special{ar 1600 2340 30 30  0.0000000 6.2831853}%
%
\special{pn 8}%
\special{ar 1400 2740 30 30  0.0000000 6.2831853}%
%
\special{pn 8}%
\special{ar 1000 2940 30 30  0.0000000 6.2831853}%
%
\special{pn 8}%
\special{ar 1600 3140 30 30  0.0000000 6.2831853}%
%
\special{pn 8}%
\special{ar 2000 2940 30 30  0.0000000 6.2831853}%
%
\special{pn 8}%
\special{pa 1050 2170}%
\special{pa 1550 2330}%
\special{fp}%
\special{sh 1}%
\special{pa 1550 2330}%
\special{pa 1494 2292}%
\special{pa 1500 2314}%
\special{pa 1480 2330}%
\special{pa 1550 2330}%
\special{fp}%
%
\special{pn 8}%
\special{pa 1450 1950}%
\special{pa 1940 2110}%
\special{fp}%
\special{sh 1}%
\special{pa 1940 2110}%
\special{pa 1884 2070}%
\special{pa 1890 2094}%
\special{pa 1870 2108}%
\special{pa 1940 2110}%
\special{fp}%
%
\special{pn 8}%
\special{pa 1970 2160}%
\special{pa 1640 2320}%
\special{fp}%
\special{sh 1}%
\special{pa 1640 2320}%
\special{pa 1710 2310}%
\special{pa 1688 2298}%
\special{pa 1692 2274}%
\special{pa 1640 2320}%
\special{fp}%
%
\special{pn 8}%
\special{pa 1400 1990}%
\special{pa 1400 2690}%
\special{fp}%
\special{sh 1}%
\special{pa 1400 2690}%
\special{pa 1420 2624}%
\special{pa 1400 2638}%
\special{pa 1380 2624}%
\special{pa 1400 2690}%
\special{fp}%
%
\special{pn 8}%
\special{pa 1000 2190}%
\special{pa 1000 2880}%
\special{fp}%
\special{sh 1}%
\special{pa 1000 2880}%
\special{pa 1020 2814}%
\special{pa 1000 2828}%
\special{pa 980 2814}%
\special{pa 1000 2880}%
\special{fp}%
%
\special{pn 8}%
\special{pa 2000 2180}%
\special{pa 2000 2890}%
\special{fp}%
\special{sh 1}%
\special{pa 2000 2890}%
\special{pa 2020 2824}%
\special{pa 2000 2838}%
\special{pa 1980 2824}%
\special{pa 2000 2890}%
\special{fp}%
%
\special{pn 8}%
\special{pa 1440 2760}%
\special{pa 1950 2930}%
\special{fp}%
\special{sh 1}%
\special{pa 1950 2930}%
\special{pa 1894 2890}%
\special{pa 1900 2914}%
\special{pa 1880 2928}%
\special{pa 1950 2930}%
\special{fp}%
%
\special{pn 8}%
\special{pa 1370 2770}%
\special{pa 1050 2920}%
\special{fp}%
\special{sh 1}%
\special{pa 1050 2920}%
\special{pa 1120 2910}%
\special{pa 1098 2898}%
\special{pa 1102 2874}%
\special{pa 1050 2920}%
\special{fp}%
%
\special{pn 8}%
\special{pa 1050 2970}%
\special{pa 1540 3140}%
\special{fp}%
\special{sh 1}%
\special{pa 1540 3140}%
\special{pa 1484 3100}%
\special{pa 1490 3124}%
\special{pa 1470 3138}%
\special{pa 1540 3140}%
\special{fp}%
%
\special{pn 8}%
\special{pa 1970 2970}%
\special{pa 1650 3130}%
\special{fp}%
\special{sh 1}%
\special{pa 1650 3130}%
\special{pa 1720 3118}%
\special{pa 1698 3106}%
\special{pa 1702 3082}%
\special{pa 1650 3130}%
\special{fp}%
%
\special{pn 8}%
\special{pa 1600 2380}%
\special{pa 1600 3090}%
\special{fp}%
\special{sh 1}%
\special{pa 1600 3090}%
\special{pa 1620 3024}%
\special{pa 1600 3038}%
\special{pa 1580 3024}%
\special{pa 1600 3090}%
\special{fp}%
%
\special{pn 8}%
\special{sh 1}%
\special{ar 1604 3470 10 10 0  6.28318530717959E+0000}%
\special{sh 1}%
\special{ar 1604 3470 10 10 0  6.28318530717959E+0000}%
\special{sh 1}%
\special{ar 1604 3470 10 10 0  6.28318530717959E+0000}%
\special{sh 1}%
\special{ar 1604 3470 10 10 0  6.28318530717959E+0000}%
%
\special{pn 8}%
\special{sh 1}%
\special{ar 1604 3570 10 10 0  6.28318530717959E+0000}%
%
\special{pn 8}%
\special{sh 1}%
\special{ar 1604 3670 10 10 0  6.28318530717959E+0000}%
%
\special{pn 8}%
\special{sh 1}%
\special{ar 1604 3770 10 10 0  6.28318530717959E+0000}%
%
\special{pn 8}%
\special{pa 1400 1660}%
\special{pa 1400 1860}%
\special{fp}%
\special{sh 1}%
\special{pa 1400 1860}%
\special{pa 1420 1794}%
\special{pa 1400 1808}%
\special{pa 1380 1794}%
\special{pa 1400 1860}%
\special{fp}%
%
\special{pn 8}%
\special{pa 1600 3210}%
\special{pa 1600 3410}%
\special{fp}%
\special{sh 1}%
\special{pa 1600 3410}%
\special{pa 1620 3344}%
\special{pa 1600 3358}%
\special{pa 1580 3344}%
\special{pa 1600 3410}%
\special{fp}%
%
\special{pn 8}%
\special{ar 1320 960 722 722  0.0000000 6.2831853}%
%
\special{pn 8}%
\special{ar 1480 2570 722 722  0.0000000 6.2831853}%
%
\special{pn 8}%
\special{pa 2530 620}%
\special{pa 2110 810}%
\special{fp}%
\special{sh 1}%
\special{pa 2110 810}%
\special{pa 2180 802}%
\special{pa 2160 788}%
\special{pa 2162 764}%
\special{pa 2110 810}%
\special{fp}%
\put(26.0000,-7.3000){\makebox(0,0)[lb]{$\vec{\mathrm{lk}}_{p.p}(P(3))$}}%
%
\special{pn 8}%
\special{pa 2620 2040}%
\special{pa 2200 2230}%
\special{fp}%
\special{sh 1}%
\special{pa 2200 2230}%
\special{pa 2270 2222}%
\special{pa 2250 2208}%
\special{pa 2252 2184}%
\special{pa 2200 2230}%
\special{fp}%
\put(26.9000,-21.5000){\makebox(0,0)[lb]{$\vec{\mathrm{lk}}_{p.p}(\tau^{-1}P(3))$}}%
\end{picture}%
\]
 
\end{exmp}

\section{An application}
In this section we consider a quiver $\overrightarrow{\mathcal{T}}_{\mathrm{p}}(Q)$ for 
$Q$ satisfying condition (b) of Theorem\;\ref{t} and $l(Q):=\mathrm{max}\{l_{Q}(x,y)\mid x,y\in Q_{0}\}\leq 1$.

Denote by $\mathcal{Q}$ the set of finite connected quivers without loops and cycles. For any quiver $Q\in \mathcal{Q}$ define a new quiver
$Q^{\circ}$ by adding new edges $x\rightarrow y$ for any source $x$ which is not sink and 
sink $y$ which is not source. 
Then let $\mathcal{A}:=\{Q\in \mathcal{Q}\mid Q\ \mathrm{has\ a\ unique\ source}\}$, 
$\mathcal{B}:=\{Q\in \mathcal{A}\mid Q\ \mathrm{has\ a\ unique\ sink }\}$ 
and $\mathcal{A}^{\circ}:=\{Q^{\circ}\mid Q\in \mathcal{A}\}$. Note that $\mathcal{A}^{\circ}=\{Q\in \mathcal{A}
 \mid l(Q)\leq 1\}$.
 
\begin{defn}
We define the maps $\overrightarrow{\amalg}:\mathcal{Q}\times \mathcal{Q}\rightarrow \mathcal{Q}$, 
$\phi:\mathcal{A}^{\circ}\times \mathcal{A}^{\circ}\rightarrow \mathcal{A}^{\circ}$, $\psi:\mathcal{A}^{\circ} \rightarrow \mathcal{B}$ and 
$\Psi:\mathcal{A}^{\circ}\times \mathcal{A}^{\circ}\rightarrow \mathcal{B}\times \mathcal{B}$
as follows,\\
$(1)$\;$(Q\overrightarrow{\amalg}Q^{'})_{0}:=Q_{0}\amalg Q^{'}_{0}$,

\ $\;(Q\overrightarrow{\amalg}Q^{'})_{1}:=Q_{1}\amalg Q^{'}_{1}\amalg \{y\rightarrow x^{'}\mid y:\mathrm{source\ of\ Q},\ x^{'}:\mathrm{sink\ of\ Q^{'}} \}$,\\
$(2)$\;$\phi(Q,Q^{'}):=(Q\overrightarrow{\amalg}Q^{'})^{\circ},$\\
$(3)$\;$\psi(Q):=\overrightarrow{\mathrm{lk}}_{\mathrm{p}}(P_{Q})$, where $P_{Q}$ is an indecomposable projective module associated with a unique source,\\
$(4)$\;$\Psi(Q,Q^{'}):=(\psi(Q^{'}),\psi(Q)))$.

\end{defn}

\begin{prop}
\label{sp}
The following diagram is commutative,

\[
\unitlength 0.1in
\begin{picture}( 15.5500, 13.5000)(  6.1000,-15.9000)
\put(6.1000,-6.0000){\makebox(0,0)[lb]{$\mathcal{A}^{\circ}\times \mathcal{A}^{\circ}$}}%
\put(8.0000,-16.2000){\makebox(0,0)[lb]{$\mathcal{A}^{\circ}$}}%
\put(18.8000,-5.9000){\makebox(0,0)[lb]{$\mathcal{B}\times \mathcal{B}$}}%
\put(19.8000,-15.9000){\makebox(0,0)[lb]{$\mathcal{B}$}}%
\put(14.6000,-4.1000){\makebox(0,0)[lb]{$\Psi$}}%
\put(13.8000,-17.6000){\makebox(0,0)[lb]{$\psi$}}%
\put(6.3000,-11.1000){\makebox(0,0)[lb]{$\phi$}}%
\put(21.6500,-11.1000){\makebox(0,0)[lb]{$\vec{\amalg}$}}%
%
\special{pn 8}%
\special{pa 970 670}%
\special{pa 970 1340}%
\special{fp}%
\special{sh 1}%
\special{pa 970 1340}%
\special{pa 990 1274}%
\special{pa 970 1288}%
\special{pa 950 1274}%
\special{pa 970 1340}%
\special{fp}%
%
\special{pn 8}%
\special{pa 2066 670}%
\special{pa 2066 1340}%
\special{fp}%
\special{sh 1}%
\special{pa 2066 1340}%
\special{pa 2086 1274}%
\special{pa 2066 1288}%
\special{pa 2046 1274}%
\special{pa 2066 1340}%
\special{fp}%
%
\special{pn 8}%
\special{pa 1160 530}%
\special{pa 1730 530}%
\special{fp}%
\special{sh 1}%
\special{pa 1730 530}%
\special{pa 1664 510}%
\special{pa 1678 530}%
\special{pa 1664 550}%
\special{pa 1730 530}%
\special{fp}%
%
\special{pn 8}%
\special{pa 1160 1530}%
\special{pa 1730 1530}%
\special{fp}%
\special{sh 1}%
\special{pa 1730 1530}%
\special{pa 1664 1510}%
\special{pa 1678 1530}%
\special{pa 1664 1550}%
\special{pa 1730 1530}%
\special{fp}%
\end{picture}%
  \]

\end{prop}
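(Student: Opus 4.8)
The plan is to prove the identity $\psi\circ\phi=\overrightarrow{\amalg}\circ\Psi$ by computing both composites explicitly on a pair $(Q,Q')\in\mathcal{A}^{\circ}\times\mathcal{A}^{\circ}$ and exhibiting a quiver isomorphism between the two resulting objects. By Corollary \ref{c}, for any $R$ satisfying $(\mathrm{b})$ the poset $(\mathrm{lk}_{\mathrm{p}}(P_{R}),\leq)$ is determined by the numerical function $l_{R}$: writing $s_{R}$ for the unique source, $\mathrm{lk}_{\mathrm{p}}(P_{R})$ is identified with the tuples $(r_{i})\in L(R)$ with $r_{s_{R}}=0$, ordered by $\leq^{op}$. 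Since $R\in\mathcal{A}^{\circ}$ forces $l(R)\leq 1$, the constraint $r_{j}\leq r_{s_{R}}+l_{R}(s_{R},j)$ together with $r_{s_{R}}=0$ puts every $r_{i}$ in $\{0,1\}$, and the only nontrivial constraints $r_{j}\leq r_{i}+l_{R}(i,j)$ are those with $l_{R}(i,j)=0$, i.e. those coming from a directed path from $j$ to $i$ in $R$. First I would record this combinatorial model and, using Proposition \ref{p2} together with the projectivity of $kR$, identify the unique maximal element $kR$ (the source of the Hasse-quiver $\overrightarrow{\mathrm{lk}}_{\mathrm{p}}(P_{R})$) and the unique minimal element $T(s_{R})$ (its sink); this is exactly the data that $\psi(R)\in\mathcal{B}$ carries and that $\overrightarrow{\amalg}$ consumes.

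The heart of the argument is to compute $l_{\phi(Q,Q')}=l_{(Q\overrightarrow{\amalg}Q')^{\circ}}$ in terms of $l_{Q}$, $l_{Q'}$ and the joining arrows, and thereby to describe the poset $\mathrm{lk}_{\mathrm{p}}(P_{\hat s})$ for $\hat R:=\phi(Q,Q')$. The reachability relation of $\hat R$ restricted to $Q_{0}$ (resp. $Q'_{0}$) should coincide with that of $Q$ (resp. $Q'$), while the joining arrows together with the edges added by $(-)^{\circ}$ introduce precisely the cross relations needed to link the two pieces. I would show that an admissible $\{0,1\}$-vector for $\hat R$ restricts to admissible vectors on $Q_{0}$ and $Q'_{0}$, and conversely that a pair of admissible vectors glues to an admissible vector for $\hat R$ exactly when the cross constraints are met. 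This produces a bijection of vertex sets $\mathrm{lk}_{\mathrm{p}}(P_{\hat s})\simeq\mathrm{lk}_{\mathrm{p}}(P_{s})\amalg\mathrm{lk}_{\mathrm{p}}(P_{s'})$ matching the underlying vertex set of $\overrightarrow{\amalg}\circ\Psi(Q,Q')$.

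Finally I would check that this bijection is a quiver isomorphism, i.e. that it matches covering relations. Covers internal to the $Q$-block and the $Q'$-block are inherited from $\overrightarrow{\mathrm{lk}}_{\mathrm{p}}(P_{s})$ and $\overrightarrow{\mathrm{lk}}_{\mathrm{p}}(P_{s'})$ by the restriction property above, so the only thing left to verify is that there is exactly one cover crossing between the two blocks and that it joins the extremal vertices singled out in the first paragraph in the manner prescribed by $\overrightarrow{\amalg}$. This last point is the main obstacle: one must show that no other cross cover occurs and that the unique crossing cover runs between the source of one block's Hasse-quiver and the sink of the other, which amounts to careful bookkeeping of the cross constraints produced by the joining arrows and the $(-)^{\circ}$-edges. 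Lemma \ref{l} (monotonicity of $d_{a}$) and Proposition \ref{p} are the tools I would use to control these constraints; once the crossing cover is pinned down, commutativity of the square follows by comparing it termwise with the definitions of $\overrightarrow{\amalg}$ and $\Psi$.
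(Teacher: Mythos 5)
Your proposal takes essentially the same route as the paper's own proof: the paper likewise reduces everything to the $\{0,1\}$-vector model of Corollary \ref{c}, characterizes the vertices of $\psi(\phi(Q(1),Q(2)))$ as exactly two blocks (those with the $Q(1)$-part zero and an admissible $Q(2)$-part pinned at its source, and those with the $Q(2)$-part all ones and an admissible $Q(1)$-part pinned at its source), and then leaves the identification of the resulting quiver with $\psi(Q(2))\overrightarrow{\amalg}\psi(Q(1))$ as "easy to check." Your third paragraph merely spells out that omitted verification, namely that all covers are internal to the two blocks except for a single crossing cover between their extremal vertices.
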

\begin{proof}
Let $Q(1),Q(2)\in \mathcal{A}^{\circ}$, $s(k)$ be a unique source of $Q(k)\;(k=1,2)$ and $T=\oplus_{i\in \phi(Q(1),Q(2))_{0}}\tau^{-r_{i}}P(i)$. Then Corollary\;\ref{c} shows $T\in\psi(\phi(Q(1),Q(2)))_{0}$ if and only if  
(i):$(r_{i})_{i\in Q(2)_{0}}\in L(Q(2))$ with $r_{s(2)}=0$ and $r_{j}=0\; (\forall j\in Q(1)_{0})$ or (ii):$(r_{j})_{j\in Q(1)_{0}}\in L(Q(1))$ with $r_{s(1)}=0$ and $r_{i}=1\;(\forall i\in Q(2)_{0}).$
Now it is easy to check that 
\[\psi(\phi(Q(1),Q(2)))=\psi(Q(2))\overrightarrow{\amalg}\psi (Q(1)).\]
\end{proof}

Now we define  a relation  $\leadsto $ on $\mathcal{A}^{\circ}$ as  follows,
\[Q\leadsto Q^{'}\stackrel{\mathrm{def}}{\Leftrightarrow} Q^{'}=Q\setminus \{\alpha\}\]
for some $\alpha \in Q_{1}$ satisfying the conditions $(1)$ or $(2)$;\\
$(1)\;$ $s(\alpha)$ is not source or $t(\alpha)$ is not sink and $\exists w\neq \alpha$ path from $s(\alpha)$ to $t(\alpha)$.\\
$(2)\;$ $s(\alpha)$ is a source, $t(\alpha)$ is a sink and there is at least three paths from $s(\alpha)$ to $t(\alpha)$ and at least two
arrows from $s(\alpha)$ to $t(\alpha)$. 

Let $\mathcal{S}:=\{Q\in \mathcal{A}^{\circ}\mid \mathrm{there\ is\ a\ no\ quiver}\ Q^{'}\ \mathrm{s.t.}\;Q\leadsto Q^{'}\}$. We can easyly see that if 
\[Q\leadsto \cdots \leadsto Q^{'}\in \mathcal{S},\ Q\leadsto \cdots \leadsto Q^{''}\in \mathcal{S},\]
then $Q^{'}=Q^{''}$ and in this case put $\pi (Q):=Q^{'}=Q^{''}$.
Now we define a equivalence relation $\sim $ on $\mathcal{A}^{\circ}$ as follows,
\[Q\sim Q^{'}\stackrel{\mathrm{def}}{\Leftrightarrow}\pi (Q)=\pi (Q^{'}).\]  
 
\begin{lem}
\label{sl1}
Let $Q, Q^{'}\in \mathcal{A}^{\circ}$. If $Q \sim Q^{'}$, then $\psi(Q)=\psi(Q)$. In particular we get a map 
\[\psi/\sim:\mathcal{A}^{\circ}/\sim\rightarrow \mathcal{B}.\] 

\end{lem}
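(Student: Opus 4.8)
The plan is to reduce the statement to a single elementary move and to show that such a move leaves the combinatorial datum $l_Q$ unchanged. Since $Q\sim Q'$ means $\pi(Q)=\pi(Q')$, and $\pi(Q)$ is reached from $Q$ by a finite chain $Q\leadsto\cdots\leadsto\pi(Q)$, it suffices to prove invariance under one step: if $Q\leadsto Q'$ then $\psi(Q)=\psi(Q')$. Granting this, $\psi(Q)=\psi(\pi(Q))=\psi(\pi(Q'))=\psi(Q')$, so $\psi$ factors through $\pi$ and hence through $\sim$, which produces the induced map $\psi/\!\sim$. (I read the displayed equality in the statement as the evident typo for $\psi(Q)=\psi(Q')$.)

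For one step $Q'=Q\setminus\{\alpha\}$ with $\alpha\colon x\to y$, I would first record that $Q'\in\mathcal{A}^{\circ}$ with the same unique source $s$. The alternative path $w_{0}\neq\alpha$ from $x$ to $y$ supplied by (1) (or the surviving parallel arrow supplied by (2)) keeps $Q'$ connected and keeps an in-arrow at $y$, so $y$ does not become a source and $s$ remains the unique source; note also that any path from $x$ to $y$ equal to $\alpha$ or, by acyclicity, avoiding $\alpha$, so $w_{0}$ lies in $Q'$. The crux is then the claim $l_{Q'}(i,j)=l_{Q}(i,j)$ for all $i,j$. Because $\widetilde{Q'}$ has fewer arrows, $l_{Q'}\ge l_{Q}$ is automatic; once equality holds, Corollary~\ref{c} gives $L(Q')=L(Q)$, so the subposets $\{(r_i)\in L(Q):r_{s}=0\}$ describing $\mathrm{lk}_{\mathrm{p}}(P_s)$ coincide, whence $\psi(Q)=\overrightarrow{\mathrm{lk}}_{\mathrm{p}}(P_Q)=\psi(Q')$.

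To prove $l_{Q'}\le l_{Q}$ I would take a $c^{+}$-minimal path $w$ in $\tilde{Q}$ from $i$ to $j$, so $c^{+}(w)=l_{Q}(i,j)\le 1$ since $l(Q)\le 1$, and reroute its uses of $\alpha$ and $-\alpha$ without increasing $c^{+}$. A backward step $-\alpha$ (that is, $y\to x$) is replaced by the reverse of $w_{0}$, a backward path $y\to x$ with $c^{+}=0$; this is the easy case. The real obstacle is a forward step $\alpha$, which (as $c^{+}(w)\le 1$) is then the unique forward arrow of $w$ and must be replaced by a path $x\to y$ in $\widetilde{Q'}$ with $c^{+}\le 1$; equivalently I must show $l_{Q'}(x,y)\le 1$. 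Under (2) this is immediate from the surviving parallel arrow $x\to y$. Under (1) I would use that $x$ is not a source or $y$ is not a sink: if $y$ is not a sink, choose $\delta\colon y\to z$; since $l(Q)\le 1$ there is a $\tilde{Q}$-path $x\to z$ with $c^{+}\le 1$, and this path cannot traverse $\alpha$ forward (that would force a forward path $z\to y$, contradicting $\delta$ and acyclicity) and, after replacing any $-\alpha$ by $w_{0}$, lies in $\widetilde{Q'}$; appending the backward arrow $-\delta\colon z\to y$ then gives the required path $x\to y$. The case ``$x$ not a source'' is dual, beginning with a backward arrow at an in-arrow $z\to x$.

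I expect the forward-$\alpha$ rerouting to be the main difficulty, precisely because the naive substitution of $\alpha$ by the possibly long path $w_{0}$ would raise $c^{+}$ above $1$; the argument works only because $l(Q)\le 1$ forces a short ($c^{+}\le 1$) detour through the extra in- or out-arrow guaranteed by condition~(1), and one must check that this detour can be realized inside $\widetilde{Q'}$. A final routine verification is that after all local replacements the concatenation is still a genuine path from $i$ to $j$ and that $-\alpha$ never reappears.
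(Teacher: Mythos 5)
Your proposal is correct and takes essentially the same route as the paper: reduce to a single move $Q\leadsto Q^{'}$, show that this move leaves $l_{Q}$ unchanged, and then conclude $\psi(Q)=\psi(Q^{'})$ from Corollary~\ref{c}. The only difference is one of detail: the paper simply asserts the invariance $l_{Q}=l_{Q^{'}}$ in one line, whereas you actually prove it via the rerouting argument (backward uses of $\alpha$ replaced by the reverse of $w_{0}$, forward uses handled by the detour through an extra in- or out-arrow using $l(Q)\leq 1$), and that argument is sound.
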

\begin{proof}
Note that if $Q\leadsto Q^{'}$ then $l_{Q}(i,j)=l_{Q^{'}}(i,j)$ for any $i,j\in Q_{0}=Q^{'}_{0}$. In particular Corollary\;\ref{c} shows
$\psi(Q)=\psi (Q^{'}).$
\end{proof} 
\begin{lem}
\label{sl2}
Let $Q(1),Q(2),Q^{'}(1),Q^{'}(2)\in\mathcal{A}^{\circ}$. If $Q(i)\sim Q^{'}(i)\;(i=1,2)$, then $\phi(Q(1),Q(2))\sim\phi(Q^{'}(1),Q^{'}(2)).$ In particular we get a map
\[\phi/\sim:\mathcal{A}^{\circ}/\sim\times \mathcal{A}^{\circ}/\sim\rightarrow \mathcal{A}^{\circ}/\sim.\]
\end{lem}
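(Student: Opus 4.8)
The plan is to reduce the statement to a single reduction step $\leadsto$ in one of the two factors, and then to show that such a step is faithfully reflected in $\phi$. Since $Q(i)\sim Q'(i)$ means $\pi(Q(i))=\pi(Q'(i))$, there are reduction sequences $Q(i)\leadsto\cdots\leadsto\pi(Q(i))$ and $Q'(i)\leadsto\cdots\leadsto\pi(Q(i))$; using that $\sim$ is an equivalence relation and the confluence of $\leadsto$ recalled above, it suffices to prove two claims: (A) if $Q(1)\leadsto\tilde{Q}(1)$ then $\phi(Q(1),Q(2))\sim\phi(\tilde{Q}(1),Q(2))$, and (B) if $Q(2)\leadsto\tilde{Q}(2)$ then $\phi(Q(1),Q(2))\sim\phi(Q(1),\tilde{Q}(2))$. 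Granting these, applying (A) step by step along the reductions of $Q(1)$ and of $Q'(1)$ to $\pi(Q(1))$ gives $\phi(Q(1),Q(2))\sim\phi(\pi(Q(1)),Q(2))\sim\phi(Q'(1),Q(2))$, and then (B) applied to the second factor yields $\phi(Q'(1),Q(2))\sim\phi(Q'(1),Q'(2))$; transitivity finishes the argument. Note that each intermediate quiver lies in $\mathcal{A}^{\circ}$ (unique source is preserved below, and $l$ is preserved by the observation in the proof of Lemma \ref{sl1}), so all occurrences of $\phi$ are defined.

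To prove (A) (and symmetrically (B)), write $\tilde{Q}(1)=Q(1)\setminus\{\alpha\}$ for an edge $\alpha$ satisfying condition (1) or (2) of $\leadsto$, and set $\hat{Q}:=\phi(Q(1),Q(2))=(Q(1)\overrightarrow{\amalg}Q(2))^{\circ}$. The first step is to observe that deleting such an $\alpha$ changes neither the set of sources nor the set of sinks of $Q(1)$: in case (1) the auxiliary path $w\neq\alpha$ keeps $s(\alpha)$ non-sink and $t(\alpha)$ non-source, while deleting one arrow can never alter the incoming or outgoing arrows at any other vertex; in case (2) the remaining parallel arrow from $s(\alpha)$ to $t(\alpha)$ plays the same role. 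Since the arrows added by $\overrightarrow{\amalg}$ depend only on the source of $Q(1)$ and the sinks of $Q(2)$, and the arrows added by $(\cdot)^{\circ}$ depend only on the source/sink structure of $Q(1)\overrightarrow{\amalg}Q(2)$, this invariance shows that all added arrows coincide for $Q(1)$ and for $\tilde{Q}(1)$. As $\alpha$ is an original arrow of $Q(1)\subseteq\hat{Q}$ and is not among the added arrows, we obtain the exact equality $\phi(\tilde{Q}(1),Q(2))=\hat{Q}\setminus\{\alpha\}$.

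It remains to check that $\alpha$ is still removable in $\hat{Q}$, i.e.\ $\hat{Q}\leadsto\hat{Q}\setminus\{\alpha\}$; with the previous step this gives $\phi(Q(1),Q(2))=\hat{Q}\leadsto\phi(\tilde{Q}(1),Q(2))$, hence (A). Here one uses that $\hat{Q}$ is obtained from $Q(1)$ on its vertex set only by adding arrows, so the properties ``$s(\alpha)$ is not a source'' and ``$t(\alpha)$ is not a sink'' are monotone, i.e.\ inherited, and the second path $w\neq\alpha$ survives in $\hat{Q}$; thus condition (1) for $\alpha$ in $Q(1)$ forces condition (1) in $\hat{Q}$. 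If instead $\alpha$ satisfied condition (2), then $s(\alpha)$ is the unique source of $Q(1)$, it stays a source of $\hat{Q}$ and $t(\alpha)$ stays a sink, while the numbers of paths and of parallel arrows from $s(\alpha)$ to $t(\alpha)$ only grow, so condition (2) persists. For claim (B) the identical computation applies, with one caveat: the source of the second factor need no longer be a source of $\hat{Q}$, so an edge removed via condition (2) in $Q(2)$ may instead satisfy condition (1) in $\hat{Q}$ — which still suffices, as the required second path is present.

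I expect this last paragraph to be the main obstacle: the bookkeeping of how the global source/sink structure of $\hat{Q}=(Q(1)\overrightarrow{\amalg}Q(2))^{\circ}$ differs from that of the individual factors, and in particular the verification that a removal justified by condition (2) in a factor degrades to a valid condition (1) in $\hat{Q}$, is exactly where the precise definitions of $\overrightarrow{\amalg}$ and $(\cdot)^{\circ}$ must be used carefully. By contrast the reduction and assembly of the first paragraph, and the source/sink invariance of the second, are routine once those definitions are unwound.
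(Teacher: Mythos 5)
Your proposal follows the same route as the paper's own proof: reduce to a single $\leadsto$ step in one factor, check that deleting the arrow $\alpha$ commutes with $\phi$ (because deletion under conditions (1)/(2) preserves the source/sink data that $\overrightarrow{\amalg}$ and $(\cdot)^{\circ}$ consume), and then verify that $\alpha$ is still removable in $\hat{Q}=\phi(Q(1),Q(2))$, so that $\hat{Q}\leadsto\hat{Q}\setminus\{\alpha\}=\phi(\tilde{Q}(1),Q(2))$ and hence the two images have the same $\pi$. The assembly in your first paragraph and the commutation step in your second are correct.

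The problem is in your third paragraph, and it traces to a point you never resolve: the paper's printed definition of $\overrightarrow{\amalg}$ (arrows from the \emph{source} of the first factor to the \emph{sinks} of the second) cannot be the intended one, since with it $Q(1)\overrightarrow{\amalg}Q(2)$ and $(Q(1)\overrightarrow{\amalg}Q(2))^{\circ}$ have two sources, $s(1)$ and $s(2)$, contradicting $\phi(\mathcal{A}^{\circ}\times\mathcal{A}^{\circ})\subseteq\mathcal{A}^{\circ}$ and the dichotomy used in the proof of Proposition \ref{sp}, which needs directed paths from every vertex of $Q(1)$ into $Q(2)$. The consistent reading joins the \emph{sinks} of the first factor to the \emph{source} of the second. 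Your analysis mixes the two readings: the first-factor condition-(2) case (``$t(\alpha)$ stays a sink \ldots so condition (2) persists'') uses the printed reading, while the second-factor case (``the source of the second factor need no longer be a source of $\hat{Q}$'') uses the corrected one; these presuppose opposite orientations of the added arrows and cannot both hold. Under the intended reading, a condition-(2) arrow in the \emph{first} factor has $t(\alpha)$ acquiring outgoing arrows toward $s(2)$, so $t(\alpha)$ is no longer a sink of $\hat{Q}$ and $\alpha$ is removable by condition (1), with a surviving parallel arrow as the witness $w$ --- exactly what the paper asserts: in every case (either factor, either condition) the removal in $\hat{Q}$ is justified by condition (1). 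Your conclusions (removability in each case) are all true and the repair is one line, but as written the justifications for the two condition-(2) cases contradict each other, so the case analysis does not go through under any single fixed definition of $\overrightarrow{\amalg}$.
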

\begin{proof}
Let $Q,Q^{'},Q^{''}\in \mathcal{A}^{\circ}$ with $Q\leadsto Q^{'}=Q\setminus \{\alpha\}$. By definition we get $\phi(Q,Q^{''})=\phi(Q,Q^{''})\setminus \{\alpha\}$. And now  
$\alpha$ satisfies the condition (1) of definition for  
$\phi(Q^{'},Q^{''})\leadsto\phi(Q^{'},Q{''})$. In particular we get
\[\phi(Q,Q^{''})\leadsto \phi(\pi(Q),Q^{''}).\]
Similarly we can see that 
\[\phi(Q,Q^{''})\leadsto \phi(Q,\pi(Q^{''})). \]
So we get $\phi(Q,Q^{''})\leadsto \phi(\pi(Q),\pi(Q^{''})).$
\end{proof}
Let $C^{n}$ be a Hasse-quiver of $(\{0,1\}^{n},\leq)$, and $Q\in \mathcal{A}^{\circ}$ with $Q_{0}=\{s,1,2,\cdots n-1\}$ where $s$ is a unique source of $Q$. Then a map 
\[\rho: P(s)\oplus(\oplus_{i=1}^{n-1}\tau^{-r_{i}}P(i))\mapsto (r(i))_{i}\] 
induces an injection
$\psi(Q)\rightarrow C^{n-1}$ as a quiver. So we identify $\psi(Q)$ as
a full sub-quiver of $C^{n-1}$. Note that $(0,\cdots,0 ),(1,\cdots 1)\in \psi (Q)$. Now for any $T\in C^{n-1}_{0}$ denote by
$T_{i}$ the $i$-th entry of $T$.
\begin{prop}
\label{sp1}
Let $Q\in \mathcal{A}^{\circ}$. Then $\psi(Q)=K(1)\overrightarrow{\amalg}K(2)$ 
for some quivers $K(1),K(2)\in \mathcal{B}$ 
if and only if $\exists (Q(1),Q(2))\in \mathcal{A}^{\circ}\times 
\mathcal{A}^{\circ}$\;s.t.\;$\psi(Q(i))=K(i)\;(i=1,2)$ 
 and $\phi(Q(2),Q(1))\sim Q$. 

\end{prop}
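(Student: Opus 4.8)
The plan is to prove the two implications separately, with the forward direction $(\Leftarrow)$ being an immediate consequence of the machinery already assembled and the reverse direction $(\Rightarrow)$ carrying essentially all of the content.

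For $(\Leftarrow)$, suppose $(Q(1),Q(2))$ is given with $\psi(Q(i))=K(i)$ and $\phi(Q(2),Q(1))\sim Q$. By Lemma~\ref{sl1} the relation $\phi(Q(2),Q(1))\sim Q$ yields $\psi(\phi(Q(2),Q(1)))=\psi(Q)$, and by Proposition~\ref{sp}, applied with first argument $Q(2)$ and second argument $Q(1)$, we have $\psi(\phi(Q(2),Q(1)))=\psi(Q(1))\overrightarrow{\amalg}\psi(Q(2))=K(1)\overrightarrow{\amalg}K(2)$. Combining the two identities gives $\psi(Q)=K(1)\overrightarrow{\amalg}K(2)$, as required; no further work is needed here.

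For $(\Rightarrow)$ I would first recover the two building blocks from the decomposition. Identify $\psi(Q)$ with its image under $\rho$ inside $C^{n-1}$, so that each vertex $T$ of $\psi(Q)$ is a $0/1$-vector $(r_v)_{v\neq s}$ with $r_s=0$; the two summands, $K(1)$ (the block containing the source $A=kQ$) and $K(2)$ (the block containing the unique sink), partition these vectors. Reading off the induced partition of the coordinate set exactly as in the proof of Proposition~\ref{sp}, I set $V_2:=\{v\in Q_0 : r_v=0 \text{ for every } T\in K(1)\}$ (which contains $s$) and $V_1:=Q_0\setminus V_2=\{v : r_v=1 \text{ for every } T\in K(2)\}$. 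I would then let $Q(2)$ and $Q(1)$ be the full subquivers of $Q$ (or of its normal form $\pi(Q)$) supported on $V_2$ and $V_1$ respectively, each inheriting a unique source and hence lying in $\mathcal{A}^{\circ}$, and check via Corollary~\ref{c} that the order on each block coincides with $(L(Q(i)),\leq^{op})$. This yields $\psi(Q(1))=K(1)$ and $\psi(Q(2))=K(2)$, and in particular exhibits $K(1),K(2)$ as honest link-quivers, i.e.\ as elements of the image of $\psi$.

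It then remains to prove $\phi(Q(2),Q(1))\sim Q$, and this is where the main difficulty lies. Proposition~\ref{sp} already gives $\psi(\phi(Q(2),Q(1)))=K(1)\overrightarrow{\amalg}K(2)=\psi(Q)$, but since $\psi$ is only injective \emph{modulo} $\sim$ this does not by itself yield the equivalence. The concrete way I would close the gap is to compare the functions $l_{\phi(Q(2),Q(1))}$ and $l_Q$ on $Q_0\times Q_0=V_1\sqcup V_2$ directly: on each block they agree with $l_{Q(i)}$ by construction, while across the two blocks the operations $\overrightarrow{\amalg}$ and $(-)^{\circ}$ insert precisely the arrows needed to make the two $\tilde{Q}$-distances match. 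Granting $l_{\phi(Q(2),Q(1))}=l_Q$, I reduce both quivers under $\leadsto$: each elementary move preserves the $l$-function (this is the observation behind Lemma~\ref{sl1}) and, conversely, the $\leadsto$-reduced representative in $\mathcal{S}$ is pinned down by the $l$-function, so $\phi(Q(2),Q(1))$ and $Q$ share the normal form $\pi$ and are $\sim$-equivalent. The hard part is exactly this last combinatorial step — checking that $\overrightarrow{\amalg}$ together with $(-)^{\circ}$ reproduces $l_Q$ across the block boundary, and that the $l$-function determines the $\leadsto$-normal form (a partial converse to Lemma~\ref{sl1}) — and I would isolate it as a separate lemma before assembling the proof.
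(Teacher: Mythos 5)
Your $(\Leftarrow)$ half is correct and is exactly the paper's argument. The $(\Rightarrow)$ half, however, has a genuine gap: your recipe for extracting $Q(1)$ and $Q(2)$ from the two blocks misplaces the cut vertex, and the resulting quivers do not satisfy $\psi(Q(i))=K(i)$. The block $K(1)$ varies precisely over the coordinates in $V_1=Q_0\setminus V_2$, whereas for any $Q'\in\mathcal{A}^{\circ}$ the quiver $\psi(Q')$ lives in the cube indexed by the \emph{non-source} vertices of $Q'$; so any $Q(1)$ with $\psi(Q(1))=K(1)$ must have vertex set $V_1$ \emph{plus one extra vertex serving as its source}. The full subquiver on $V_1$ is one vertex short, and in general it has several sources, so it need not lie in $\mathcal{A}^{\circ}$ at all, even after applying $(-)^{\circ}$. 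Concretely, take $Q:\;s\to a\to b,\ s\to b$ (Example 1 with $n=3$; it lies in $\mathcal{S}$). Then $\psi(Q)$ is the chain $(0,0)\to(0,1)\to(1,1)$ in coordinates $(r_a,r_b)$, and for the decomposition $K(1)=\{(0,0),(0,1)\}$, $K(2)=\{(1,1)\}$ your definitions give $V_2=\{s,a\}$, $V_1=\{b\}$, hence $Q(1)=\{b\}$ and $Q(2)=(s\to a)^{\circ}$: but $\psi(Q(1))$ is a point while $K(1)$ has two elements, and $\psi(Q(2))$ is a two-element chain while $K(2)$ is a point. (Your claimed identity $Q_0\setminus V_2=\{v\mid r_v=1\ \forall T\in K(2)\}$ also fails here: the right-hand side is $\{a,b\}$.) The correct pair is $Q(1)=(a\to b)^{\circ}$ and $Q(2)=\{s\}$; the vertex $a$ must migrate to the $Q(1)$ side as its source. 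The idea missing from your proposal is the paper's central step: using Proposition~\ref{p2} and the comparability of every element of $\psi(Q)$ with $\min K(1)$, one identifies a single vertex $i$ with $\min K(1)=T(i)$, shows every other vertex has a path to or from $i$, and then cuts $Q$ at $i$, taking $Q(1):=(\text{full subquiver on }\{j\mid l_{Q}(j,i)=0\})^{\circ}$, so that $i$ is its source, and $Q(2):=(\text{full subquiver on }\{j\neq i\mid l_{Q}(i,j)=0\})^{\circ}$. (The paper's displayed definition of $Q'(1)_0$, $Q'(2)_0$ contains the same misplacement of $i$, but its surrounding argument --- the injection $\iota$ and the inequalities $T''_j\leq T(i)_j$ --- only parses with $i$ as the source of $Q(1)$, so there it is a notational slip rather than a missing idea.)

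Your treatment of the remaining step $\phi(Q(2),Q(1))\sim Q$ is also off. What must be proved is the relation $\sim$, not equality of quivers, so injectivity of $\psi$ modulo $\sim$ is exactly sufficient: once $\psi(Q(i))=K(i)$ is in hand, Proposition~\ref{sp} gives $\psi(\phi(Q(2),Q(1)))=K(1)\overrightarrow{\amalg}K(2)=\psi(Q)$, and Lemma~\ref{sl4} (whose proof is independent of this proposition, so there is no circularity) immediately yields $\phi(Q(2),Q(1))\sim Q$. You dismiss precisely this route as insufficient, and replace it by an unproven comparison of $l$-functions across the cut together with an unproven ``partial converse to Lemma~\ref{sl1}'' (which is essentially Lemma~\ref{sl4} again), explicitly deferring the hard part to a lemma you do not supply; the paper instead proves $Q=(Q'(2)\overrightarrow{\amalg}Q'(1))^{\circ}$ outright from $Q\in\mathcal{S}$ and the path structure around $i$. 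So as written the proposal does not prove the hard direction: the constructed quivers are wrong, and the step you flag as remaining is the actual content that needed to be supplied.
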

\begin{proof}
First assume that $\exists (Q(1),Q(2))\in \mathcal{A}^{\circ}\times 
\mathcal{A}^{\circ}$\;s.t.\;$\phi(Q(2),Q(1))\sim Q$ and put $\psi(Q(i))=K(i)\;(i=1,2).$
Then proposition\;\ref{sp}, Lemma\:\ref{sl1} and Lemma\;\ref{sl2} implies
 \[\psi(Q)=K(1)\overrightarrow{\amalg}K(2). \]

Next assume that $\psi(Q)=K(1)\overrightarrow{\amalg}K(2)$ 
for some quivers $K(1),K(2)\in \mathcal{B}.$
By above Lemma we can assume that $Q\in \mathcal{S}$.
Let $T$ be the unique minimal element of $K_{1}$ and $T^{'}$ be the 
unique maximal element of $K_{2}$. Then $\exists 1 i$ s.t.\;$T_{i}=0,T^{'}_{i}=1$.  Note that $T^{''}\leq T$ or $T^{''}> T$
for any $T^{''}\in \psi(Q)$\;(note also that $T^{''}> T^{'}$ or $T^{''}\leq T^{'}$
for any $T^{''}\in \psi(Q)$). Since $T^{'}<T(i)\leq T$ we get $T=T(i)$.
 If $T(j)\leq T=T(i)$ then $l(i,j)\leq l(j,j)=0$, and if $T(j)>T(i)$
 then $l(j,i)\leq l(i,i)=0$. So for any $j\leq n-1$ there is a path
 from $i$ to $j$ or a path from $j$ to $i$. This implies that
 $Q=(Q^{'}(2)\overrightarrow{\amalg }Q^{'}(1))^{\circ}$, where $Q^{'}(1)\in \mathcal{A}$\;(resp.\;$Q^{'}(2)$) is the full sub-quiver of $Q$
 with $Q^{'}(1)_{0}=\{j\mid j\neq i, l(j,i)=0\}$\;(resp.\;$Q^{'}(2)_{0}=\{j\mid  l(i,j)=0\}$)\;(remark.\;we use the fact $Q\in \mathcal{S}$). 
 Let $Q(i):=Q^{'}(i)^{\circ}$, then   
 $(Q^{'}(2)\overrightarrow{\amalg }Q^{'}(1))^{\circ}\sim \phi(Q(2),Q(1))$.
 
 Now it is sufficient to show that $\psi(Q(i))=K(i)$. Consider a injection
 \[\iota:\psi(Q(1))\rightarrow K(1),\]
 where $\iota(T^{''})_{j}:=\left\{\begin{array}{cc}
 T^{''}_{j} & j\in Q(1)_{0} \\ 
 0 & \mathrm{otherwise}.
 \end{array}\right.$
 
 Let $T^{''}\in K(1)$, then $T^{''}_{j}\leq T^{''}_{j^{'}}+l_{Q}(j^{'},j)=T^{''}_{j^{'}}+l_{Q(1)}(j^{'},j)$ for any
 $j,j^{'}\in Q(1)_{0}$. Since $T^{''}\geq T(i)$ we get $T^{''}_{j}\leq T(i)_{j}=l_{Q}(i,j)=0$ for any $j\in Q(2)_{0}$ 
 and this implies $T^{''}\in \iota (\psi(Q(1))).$ So we get $\psi(Q(1))=K(1)$.

Similarly we can see that $\psi(Q(2))=K(2)$.

\end{proof}

Now consider the following properties for full sub-quiver $K$ of $C^{n}$,\\
(i)$_{n}$  $(0,\cdots 0),(1,\cdots,1)\in K_{0}$,\\
(ii)$_{n}$\ for any $T>T^{'}$ in $K$, there is a path from $T$ to $T^{'}$ in $K$,\\
(iii)$_{n}$ $<T,T^{'}>_{\pm}\in K_{0}$ for any $T,T^{'}\in K_{0}$, where
$<T,T^{'}>_{+}:=(\mathrm{min}\{T_{i},T^{'}_{i}\})_{i}$ and $<T,T^{'}>_{-}:=(\mathrm{max}\{T_{i},T^{'}_{i}\})_{i}$.\\
Let $\mathcal{L}_{n}:=\{K\in \mathcal{B}\mid K\ \mathrm{satisfies}\ \mathrm{(i)}_{n},\mathrm{(ii)}_{n},\mathrm{(iii)}_{n}\}$ 
and $\mathcal{L}:=\amalg \mathcal{L}_{n}.$  
For any $K\in \mathcal{L}$ we define $\psi^{-}(K)\in \mathcal{S}$ as follows. Let $Q^{'}(K)$ be a Hasse-quiver of the poset $(\{1,2,\cdots n-1\},\leq_{K})$, where $i\leq_{K} j\stackrel{\mathrm{def}}{\Leftrightarrow}T_{i}\geq T_{j}$ for any $T\in K$ (Remark. Assume $T_{i}=T_{j}$ for any $T\in K$. Now there is  a path $(0,\cdots 0)=T^{0}\rightarrow T^{1}\rightarrow \cdots \rightarrow T^{r}=(1,\cdots 1)$ in $K$ and so $\exists a$ s.t.\;$T^{a-1}_{i}=T^{a-1}_{j}=0$ and
 $T^{a}_{i}=T^{a}_{j}=1$. This implies $i=j$.). And set $\psi^{-}(K):=(\{\stackrel{s}{\circ}\}\overrightarrow{\amalg}Q^{'}(K))^{\circ}$.
 \begin{lem}
 \label{sl4'}
Let $Q\in \mathcal{A}^{\circ}$ then $\psi(Q)\in \mathcal{L}$.   
 \end{lem}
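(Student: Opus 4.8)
The plan is to read the vertex set of $\psi(Q)=\overrightarrow{\mathrm{lk}}_{\mathrm{p}}(P(s))$ off Corollary~\ref{c} and then check the four requirements for membership in $\mathcal{L}$ (namely $\psi(Q)\in\mathcal{B}$ together with the conditions (i), (ii), (iii)) one at a time. Writing $Q_{0}=\{s,1,\dots,n-1\}$ with $s$ the unique source, Corollary~\ref{c} identifies $\psi(Q)_{0}$ with
\[\{(r_{i})_{i\neq s}\in\{0,1\}^{n-1}\mid r_{j}\leq r_{i}+l_{Q}(i,j)\ \text{for all }i,j\neq s\},\]
since membership in $\mathrm{lk}_{\mathrm{p}}(P(s))$ forces $r_{s}=0$, and then $r_{i}\leq r_{s}+l_{Q}(s,i)=l_{Q}(s,i)\leq 1$ because $l(Q)\leq 1$. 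As noted just before the statement, under $\rho$ this realizes $\psi(Q)$ as a full sub-quiver of $C^{n-1}$, which by Lemma~\ref{l2} is exactly the Hasse-quiver of the finite poset $(\mathrm{lk}_{\mathrm{p}}(P(s)),\leq)$.

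First I would dispose of $\psi(Q)\in\mathcal{B}$ and condition (i) together. Finiteness is Theorem~\ref{t}(3), connectedness is Theorem~\ref{t}(4), and acyclicity is automatic since $\psi(Q)$ is a Hasse-quiver. For the unique source and sink, note that $A=kQ$ corresponds to $(0,\dots,0)$ and is the global maximum of $(\mathcal{T}_{\mathrm{p}}(Q),\leq)$, hence the unique maximal element of $\mathrm{lk}_{\mathrm{p}}(P(s))$; dually, Proposition~\ref{p2} gives $T(s)=\oplus_{j}\tau^{-l(s,j)}P(j)$ as the unique minimal element. Here I use that $s$ is the unique source: a path in $\tilde{Q}$ from $s$ to $j\neq s$ with $c^{+}=0$ would be a directed path from $j$ to $s$ in $Q$, which is impossible since $s$ has no incoming arrows, so $l_{Q}(s,j)=1$ for all $j\neq s$ and $T(s)$ corresponds to $(1,\dots,1)$. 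Thus $(0,\dots,0)$ and $(1,\dots,1)$ lie in $\psi(Q)_{0}$, which is condition (i), and they are the unique source and sink, giving $\psi(Q)\in\mathcal{B}$.

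Condition (ii) is then formal: if $T\geq T'$ in the cube order, then by Corollary~\ref{c} they are comparable in $(\mathrm{lk}_{\mathrm{p}}(P(s)),\leq)$, and in a finite poset any two comparable elements are joined by a saturated chain, i.e. by a directed path in the Hasse-quiver $\psi(Q)$.

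The substantive point is condition (iii), the closure of $\psi(Q)_{0}$ under coordinatewise $\min$ and $\max$. I would argue directly: fix $(r_{i}),(r'_{i})\in\psi(Q)_{0}$ and $i,j\neq s$, and put $l:=l_{Q}(i,j)$, so that $r_{j}\leq r_{i}+l$ and $r'_{j}\leq r'_{i}+l$. For $\langle T,T'\rangle_{+}$, if $\min(r_{i},r'_{i})=r_{i}$ then $\min(r_{j},r'_{j})\leq r_{j}\leq r_{i}+l=\min(r_{i},r'_{i})+l$, and symmetrically when the minimum is $r'_{i}$; for $\langle T,T'\rangle_{-}$, if $\max(r_{j},r'_{j})=r_{j}$ then $r_{j}\leq r_{i}+l\leq \max(r_{i},r'_{i})+l$, and symmetrically. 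Hence both $\langle T,T'\rangle_{\pm}$ satisfy every defining inequality and lie in $\psi(Q)_{0}$. This closure — essentially the observation that the system $r_{j}-r_{i}\leq l_{Q}(i,j)$ carves out a sublattice of the cube — is where the combinatorial structure of $\mathcal{L}$ comes from, and is the step I expect to be the heart of the matter; everything else reduces to Corollary~\ref{c}, Proposition~\ref{p2}, and the connectivity results of Theorem~\ref{t}.
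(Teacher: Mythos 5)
Your proposal is correct and takes essentially the same route as the paper: both read $\psi(Q)_{0}$ off Corollary~\ref{c} as the subset of the cube $C^{n-1}$ cut out by the inequalities $r_{j}\leq r_{i}+l_{Q}(i,j)$, and then verify conditions (i), (ii), (iii) directly, with (iii) as the substantive step. The only difference is in execution: your closure argument for $\langle T,T^{'}\rangle_{\pm}$ is the general observation that such difference constraints are preserved by coordinatewise min and max, whereas the paper runs a $\{0,1\}$-specific case analysis on $\min\{T_{i},T^{'}_{i}\}$ and $l_{Q}(j,i)$ (and cites (i), (ii) from earlier remarks rather than re-deriving them, as you do via Theorem~\ref{t} and Proposition~\ref{p2}); the two verifications are interchangeable.
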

 \begin{proof}
 Let $K=\psi(Q)$ with $Q\in\mathcal{A}^{\circ}$ and $n:=\#Q_{0}$.
Then we have already seen that $K$ is a full sub-quiver of $C^{n-1}$ and 
$(0,\cdots, 0),(1,\cdots, 1)\in K_{0}$. So $K$ satisfies the condition 
(i)$_{n-1}$. Note that $K$ also satisfies the condition (ii)$_{n-1}$\;(see the proof of Theorem\;\ref{t}\;(3)).

Now it is sufficient to prove that $K$ satisfies the condition (iii)$_{n-1}$. Let $T,T^{'}\in K_{0}$ and $i,j\in Q_{0}$.
If min$\{T_{i},T^{'}_{i}\}=0$ or $l_{Q}(j,i)=1$, then min$\{T_{i},T^{'}_{i}\}\leq $min$\{T_{j},T^{'}_{j}\}+l_{Q}(j,i)$.
Assume that min$\{T_{i},T^{'}_{i}\}=1$ and $l_{Q}(j,i)=0$.
In this case $1=T_{i}\leq T_{j}$ and $1=T^{'}_{i}\leq T^{'}_{j}$ and this implies
 $T_{j}=T^{'}_{j}=1$. In particular min$\{T_{i},T^{'}_{i}\}\leq $min$\{T_{j},T^{'}_{j}\}+l_{Q}(j,i)$ for any $i,j\in Q_{0}$.   
So $<T,T^{'}>_{+}\in K_{0}$. Similarly we can show that $<T,T^{'}>_{-}\in K_{0}$.
 \end{proof}
 \begin{lem}
 \label{sl4}
 We get $Q\sim \psi^{-}(\psi(Q))$ for any $Q\in \mathcal{A}^{\circ}.$ In particular $\psi/\sim$ is injective.
 \end{lem}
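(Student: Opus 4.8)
The plan is to prove the displayed equivalence $Q \sim \psi^{-}(\psi(Q))$ and then read off injectivity of $\psi/\!\sim$ formally. First I would unwind what must be shown. By construction $\psi^{-}(K)\in\mathcal{S}$ for every $K\in\mathcal{L}$, and $\psi(Q)\in\mathcal{L}$ by Lemma~\ref{sl4'}, so $\psi^{-}(\psi(Q))\in\mathcal{S}$ and hence $\pi(\psi^{-}(\psi(Q)))=\psi^{-}(\psi(Q))$. Thus $Q\sim\psi^{-}(\psi(Q))$ is equivalent to the single equality $\pi(Q)=\psi^{-}(\psi(Q))$. Since $Q\sim\pi(Q)$, Lemma~\ref{sl1} gives $\psi(Q)=\psi(\pi(Q))$, whence $\psi^{-}(\psi(Q))=\psi^{-}(\psi(\pi(Q)))$. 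Therefore it suffices to prove the clean statement $\psi^{-}(\psi(Q_{0}))=Q_{0}$ for every $Q_{0}\in\mathcal{S}$, and then apply it to $Q_{0}=\pi(Q)$.

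So fix $Q_{0}\in\mathcal{S}$ and put $K:=\psi(Q_{0})$. The heart of the argument is to recover the quiver-theoretic data of $Q_{0}$ from the coordinatewise order $\leq_{K}$. I would first establish the key combinatorial identity: for non-source vertices $i,j$,
\[ i\leq_{K}j \iff \text{there is a path from } j \text{ to } i \text{ in } Q_{0} \iff l_{Q_{0}}(i,j)=0. \]
The implication $\Leftarrow$ is immediate from Corollary~\ref{c}: if $l_{Q_{0}}(i,j)=0$ then every $(r_{k})\in L(Q_{0})$ satisfies $r_{j}\leq r_{i}+l_{Q_{0}}(i,j)=r_{i}$, i.e. $T_{i}\geq T_{j}$ for all $T\in K$. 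For $\Rightarrow$ I would argue by contraposition: assuming no path from $j$ to $i$, define $r_{k}:=0$ whenever there is a path from $k$ to $i$ and $r_{k}:=1$ otherwise. One checks $(r_{k})\in L(Q_{0})$ with $r_{s}=0$ (a composition of paths rules out a violating pair, and the unique source reaches $i$), so the associated $T\in K$ has $T_{i}=0<1=T_{j}$ and hence $i\not\leq_{K}j$. Consequently $Q'(K)$ is exactly the Hasse quiver of the reachability order on the non-source vertices of $Q_{0}$.

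It then remains to check that gluing $\{s\}$ to $Q'(K)$ via $\overrightarrow{\amalg}$ and applying $(\,\cdot\,)^{\circ}$ returns $Q_{0}$. Here the hypothesis $Q_{0}\in\mathcal{S}$ is essential, and this is the step I expect to be the main obstacle. Irreducibility under $\leadsto$ forces every arrow of $Q_{0}$ to be non-redundant: condition $(1)$ in the definition of $\leadsto$ prevents any arrow $\alpha$ (with $s(\alpha)$ not a source or $t(\alpha)$ not a sink) from being paralleled by another path, so among the non-source vertices the arrows of $Q_{0}$ coincide with the covering relations of $\leq_{K}$, that is, with the arrows of $Q'(K)$; and condition $(2)$ together with $Q_{0}\in\mathcal{A}^{\circ}$ pins down exactly which source-to-sink arrows occur, namely those restored by the $\overrightarrow{\amalg}$-gluing with $\{s\}$ followed by $(\,\cdot\,)^{\circ}$. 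Carrying out this arrow-by-arrow comparison — in particular the bookkeeping for the vertices adjacent to the source $s$ and for the direct source–sink edges — is the delicate part; everything else is formal. I expect Proposition~\ref{p2} to be convenient for translating between the $l$-function and $\leq_{K}$.

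Finally, the ``in particular'' is purely formal. If $\psi(Q)=\psi(Q')$ then $\psi^{-}(\psi(Q))=\psi^{-}(\psi(Q'))$, and the main equivalence gives $Q\sim\psi^{-}(\psi(Q))=\psi^{-}(\psi(Q'))\sim Q'$, so $Q\sim Q'$; hence $\psi/\!\sim$ is injective.
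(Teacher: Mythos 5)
Your proposal is correct and follows essentially the same route as the paper's proof: the reduction to $Q\in\mathcal{S}$, the identification of $\leq_{\psi(Q)}$ with the reachability order on non-source vertices (your explicitly constructed separating tuples are exactly the elements $T(i)=(l_{Q}(i,k))_{k}$ of Proposition~\ref{p2} that the paper invokes), and the use of $\leadsto$-irreducibility to match arrows of $Q$ with covering relations, i.e.\ with arrows of $Q^{'}(\psi(Q))$. The source-vertex bookkeeping that you defer as ``the delicate part'' is treated no more fully in the paper, whose proof simply asserts that $Q\setminus\{s\}=Q^{'}(\psi(Q))$ implies $Q=\psi^{-}(\psi(Q))$, so your attempt matches the paper's argument in both substance and level of detail.
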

 \begin{proof}
 It is sufficient to show that if $Q\in \mathcal{S}$ then $Q=\psi^{-}(\psi(Q))$. Let $s\neq i\rightarrow j$ in $Q$. Then 
 $T_{i}\leq T_{j}+l_{Q}(j,i)=T_{j}$ for any $T\in \psi(Q)$.
 Now assume that $j<_{\psi(Q)}j^{'}<_{\psi(Q)} i$ then 
 $l(j^{'},i)=T(j^{'})_{i}\leq T(j^{'})_{j^{'}}=0$ and
 $l(j,j^{'})=T(j)_{j^{'}}\leq T(j)_{j}=0$. This implies that
  there exists a path 
  \[i\rightarrow \cdots \rightarrow j^{'}\rightarrow\cdots \rightarrow j\]
  in $Q$. Since $Q\in \mathcal{S}$, this is a contradiction. So $i\rightarrow j$ in $Q^{'}(\psi(Q))$.
  
  On the other hands if $i \rightarrow j$ in $Q^{'}(\psi(Q))$ then
  $l(j,i)=T(j)_{i}\leq T(j)_{j}=0$. So there is a path 
  \[i\rightarrow j^{'}\cdots \rightarrow j\]
  in $Q$.
  Now $T_{i}\leq T_{j^{'}}+l(j^{'},i)=T_{j^{'}}\leq T_{j}+l(j,j^{'})=T_{j}$ for any $T\in \psi(Q).$ This implies 
  \[j\leq_{\psi(Q)}j^{'}<_{\psi(Q)}i,\]
  In particular $j=j^{'}$ and there is an arrow $i\rightarrow j$ in $Q$.
 
 So $Q\setminus \{s\}=Q^{'}(\psi(Q))$ and this implies $Q=(\{\stackrel{s}{\circ}\}\overrightarrow{\amalg}Q^{'}(\psi(Q)))^{\circ}=\psi^{-}(\psi(Q)).$  
 \end{proof}

Let $Q\in \mathcal{A}^{\circ}$ and $\Lambda(Q):=(Q_{0}\setminus \{s\},\leq_{Q})$ be a poset where, \[i\leq_{Q} j\stackrel{\mathrm{def}}{\Leftrightarrow} \mathrm{there\ is\ a\ path\ from }\ i\ \mathrm{to}\ j.\] We note that above proof shows 
$\leq_{Q}=\leq_{\psi(Q)}$. Let $\mathcal{I}(Q)$ be the set of the poset ideals of $\Lambda(Q)$.
\begin{prop}
\label{poset}
There is a poset isomorphism
\[(\mathcal{I}(Q),\subset )\simeq (\mathrm{lk}_{\mathrm{p}}(P(s_{Q}),\leq).\]

\end{prop}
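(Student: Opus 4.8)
The plan is to realize $\mathrm{lk}_{\mathrm{p}}(P(s))$ as the set of $\{0,1\}$-valued monotone functions on $\Lambda(Q)$ and then read off the isomorphism directly from Corollary~\ref{c}. Since $P(s)\in\add T$ means exactly that the $s$-th coordinate of $T=\bigoplus_i\tau^{-r_i}P(i)$ vanishes, Corollary~\ref{c} identifies $\mathrm{lk}_{\mathrm{p}}(P(s))$ with $\{(r_i)_{i\in Q_0}\in L(Q)\mid r_s=0\}$. Because $l(Q)\le 1$, the inequality $r_j\le r_s+l_Q(s,j)$ forces $r_j\le 1$ for every $j$, so each such tuple lies in $\{0,1\}^{Q_0\setminus\{s\}}$.

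Next I would determine precisely which $\{0,1\}$-tuples occur. For $i,j\neq s$ the constraint $r_j\le r_i+l_Q(i,j)$ is automatic when $l_Q(i,j)=1$ (then $r_j\le 1\le r_i+1$) and becomes $r_j\le r_i$ exactly when $l_Q(i,j)=0$. As $l_Q(i,j)=0$ holds if and only if there is a path from $j$ to $i$ in $Q$, i.e. $j\le_Q i$, the surviving constraints say precisely that $i\mapsto r_i$ is an order-preserving map $\Lambda(Q)\to\{0<1\}$. Such a map is determined by the down-set $I:=\{i\mid r_i=0\}$, whose complement is a filter; conversely every ideal $I\in\mathcal{I}(Q)$ gives back a monotone tuple. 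This yields the bijection $I\mapsto T_I$, where $T_I:=P(s)\oplus\bigoplus_{i\in I}P(i)\oplus\bigoplus_{i\notin I\cup\{s\}}\tau^{-1}P(i)$.

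Finally I would check order-preservation. By Corollary~\ref{c}, $T\ge T'$ in $\mathcal{T}_{\mathrm{p}}(Q)$ iff $r_i\le r'_i$ for all $i$, and for $\{0,1\}$-tuples this is equivalent to $\{i\mid r_i=0\}\supseteq\{i\mid r'_i=0\}$; hence $T_I\le T_{I'}$ iff $I\subseteq I'$, which is exactly the claimed isomorphism $(\mathcal{I}(Q),\subseteq)\simeq(\mathrm{lk}_{\mathrm{p}}(P(s)),\le)$. The one delicate point --- the step most prone to sign errors --- is to keep three order-reversals consistent: the opposite order $\le^{op}$ of Corollary~\ref{c}, the fact that $l_Q(i,j)=0$ corresponds to a path from $j$ to $i$ (and so is contravariant in the path direction), and the decision to attach the ideal to $r^{-1}(0)$ rather than $r^{-1}(1)$. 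These directions are pinned down by the extremal check that $I=\emptyset$ gives the unique minimal element $T(s)=P(s)\oplus\bigoplus_{i\neq s}\tau^{-1}P(i)$ of Proposition~\ref{p2} while $I=Q_0\setminus\{s\}$ gives $kQ$; once they match, the remainder is routine.
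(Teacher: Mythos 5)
Your proof is correct and follows essentially the same route as the paper: both arguments use Corollary~\ref{c} to identify $\mathrm{lk}_{\mathrm{p}}(P(s))$ with the $\{0,1\}$-tuples in $L(Q)$ having $r_s=0$, and then match such monotone tuples (via their zero-sets) with the poset ideals of $\Lambda(Q)$, with the same orientation of all the order-reversals. The paper merely packages this in the opposite direction, defining $\rho:I\mapsto P(s)\oplus\bigoplus_i\tau^{-r_I(i)}P(i)$ and checking it lands in $L(Q)$ and is surjective, which is the same computation you perform when characterizing which tuples occur.
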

\begin{proof}
Let $I\in \mathcal{I}(Q)$. We define a map $r_{I}:Q_{0}\setminus \{s_{Q}\}\rightarrow \{0,1\}$ as follows,
\[r_{I}(i):=\left\{\begin{array}{ll}
0 & \mathrm{if}\ i\in I \\ 
1 & \mathrm{if}\ i\notin I.
\end{array}\right.  \]
Now we show that $\rho: I\rightarrow P(s_{Q})\oplus_{i\in Q_{0}\setminus \{s_{Q}\}}\tau^{-r_{I}(i)}P(i)$ induces poset isomorphism \[(\mathcal{I}(Q),\subset )\simeq (\mathrm{lk}_{\mathrm{p}}(P(s_{Q}),\leq).\]
 
Let $I\in \mathcal{I}(Q)$. First we shows $r_{I}(i)\leq r_{I}(j)+l_{Q}(j,i)$. We only consider the case $r_{I}(i)=1,\;r_{I}(j)=0$. In this case $i\notin I$, $j\in I$ and this implies
there is no path from $i$ to $j$ in $Q$. So we get $l_{Q}(j,i)=1$.   
Now it is obvious that $\rho$ induces poset inclusion
\[(\mathcal{I}(Q),\subset )\rightarrow (\mathrm{lk}_{\mathrm{p}}(P(s_{Q}),\leq).\] So it is sufficient to show that $\rho$ is surjective. Let $T=P(s_{Q})\oplus_{i\in Q_{0}\setminus \{s_{Q}\}}\tau^{-r_{i}}P(i)\in \mathrm{lk}_{\mathrm{p}}(P(s_{Q})$ and $I(T):=\{i\in Q_{0}\setminus\{s_{Q}\}\mid r_{i}=0\}$. Then it is easy to check
that $I(T)\in \mathcal{I}(Q)$. This implies that $\rho(I(T))=T$.  
\end{proof}

\begin{cor}
Let $\Lambda$ be a finite poset, $\overrightarrow{\Lambda}$ be a its Hasse quiver, $\mathcal{I}(\Lambda)$ be the set
of  poset ideals of $\Lambda$ and $\overrightarrow{\mathcal{I}}(\Lambda)$ be a Hasse-quiver of the poset $(\mathcal{I}(\Lambda),\subset)$. Then  $\overrightarrow{\mathcal{I}}(\Lambda)=\psi(Q(\Lambda))$, where $Q(\Lambda):=(\{s\}\overrightarrow{\amalg}\overrightarrow{\Lambda})^{\circ}$.

\end{cor}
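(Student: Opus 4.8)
The plan is to read the statement off Proposition~\ref{poset} after translating from posets to their Hasse quivers. Put $Q:=Q(\Lambda)=(\{s\}\overrightarrow{\amalg}\overrightarrow{\Lambda})^{\circ}$. Everything reduces to two checks: that $Q\in\mathcal{A}^{\circ}$, and that the reachability poset $\Lambda(Q)=(Q_{0}\setminus\{s\},\le_{Q})$ is the given poset $\Lambda$. Once these are in hand, the conclusion is automatic, since $\psi(Q)$ is by definition the Hasse quiver of $(\mathrm{lk}_{\mathrm{p}}(P(s)),\le)$ and $\overrightarrow{\mathcal{I}}(\Lambda)$ is the Hasse quiver of $(\mathcal{I}(\Lambda),\subset)$.

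First I would verify $Q\in\mathcal{A}^{\circ}$. Since $Q$ is obtained by applying $(-)^{\circ}$, it satisfies $l(Q)\le 1$, because $\mathcal{A}^{\circ}=\{Q'\in\mathcal{A}\mid l(Q')\le 1\}$; thus it remains to see that the adjoined vertex $s$ is the unique source. By construction the arrows issuing from $s$ connect $s$ to $\overrightarrow{\Lambda}$, and one checks that after forming $(-)^{\circ}$ every other vertex has an incoming arrow, so that $s$ is the only vertex with no incoming arrow. This places $Q$ in $\mathcal{A}^{\circ}$, so that both the definition $\psi(Q)=\overrightarrow{\mathrm{lk}}_{\mathrm{p}}(P(s))$ and Proposition~\ref{poset} apply to $Q$.

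Next I would identify $\Lambda(Q)$ with $\Lambda$. This is the exact analogue of the computation $Q\setminus\{s\}=Q'(\psi(Q))$ carried out in the proof of Lemma~\ref{sl4}: the arrows of $Q$ lying in $Q_{0}\setminus\{s\}$ are those coming from $\overrightarrow{\Lambda}$ together with the extra arrows supplied by $(-)^{\circ}$, and one must check that their transitive closure is exactly the order of $\Lambda$. The arrows added by $\overrightarrow{\amalg}$ all start at $s$ and so are irrelevant to paths between two elements of $\Lambda$. Granting this, $\le_{Q}$ restricted to $Q_{0}\setminus\{s\}$ agrees with the order of $\Lambda$, hence $(\mathcal{I}(\Lambda(Q)),\subset)=(\mathcal{I}(\Lambda),\subset)$ and in particular $\overrightarrow{\mathcal{I}}(\Lambda(Q))=\overrightarrow{\mathcal{I}}(\Lambda)$.

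Finally I would assemble the pieces. Proposition~\ref{poset} furnishes a poset isomorphism $(\mathcal{I}(\Lambda(Q)),\subset)\simeq(\mathrm{lk}_{\mathrm{p}}(P(s)),\le)$, and since a poset isomorphism identifies Hasse quivers we get $\psi(Q)=\overrightarrow{\mathcal{I}}(\Lambda(Q))$; combining with the previous step gives $\psi(Q(\Lambda))=\overrightarrow{\mathcal{I}}(\Lambda)$. I expect the only real difficulty to lie in the third step, namely the orientation bookkeeping: one must track the conventions for $\overrightarrow{\Lambda}$, for the path order $\le_{Q}$, and for the comparison order used to define $Q'$ carefully enough to be sure that $\Lambda(Q)$ comes out as $\Lambda$ itself and not as its opposite, since replacing $\Lambda$ by $\Lambda^{\mathrm{op}}$ would replace $\overrightarrow{\mathcal{I}}(\Lambda)$ by the Hasse quiver of the opposite ideal lattice, which in general is a different quiver.
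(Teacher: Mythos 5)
Your proposal matches the paper's implicit proof exactly: the corollary is stated there without proof precisely because it is the specialization of Proposition~\ref{poset} to $Q=Q(\Lambda)$, combined with the observations that $Q(\Lambda)\in\mathcal{A}^{\circ}$, that the reachability poset $\Lambda(Q(\Lambda))$ is $\Lambda$, and that a poset isomorphism identifies Hasse quivers. The orientation bookkeeping you flag at the end is a genuine wrinkle, but it stems from the paper's own conventions (the stated Hasse-quiver orientation, the definition of $\leq_{Q}$ via paths, and the remark $\leq_{Q}=\leq_{\psi(Q)}$ preceding Proposition~\ref{poset} cannot all be taken literally at once), so resolving it is a matter of fixing the paper's sign conventions consistently rather than a missing idea in your argument.
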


\begin{lem}
\label{sl5}
 Let $K\in \mathcal{B}$. Then the followings are equivalent,\\
 $(1)$ $K=\psi(Q)$ for some $Q\in \mathcal{A}^{\circ}$,\\
 $(2)$ $K\in \mathcal{L}$.
 \end{lem}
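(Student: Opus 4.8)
The implication $\mathrm{(1)}\Rightarrow\mathrm{(2)}$ is exactly Lemma~\ref{sl4'}, so the plan is to prove $\mathrm{(2)}\Rightarrow\mathrm{(1)}$. Given $K\in\mathcal{L}$, the natural candidate is $Q:=\psi^{-}(K)\in\mathcal{S}\subseteq\mathcal{A}^{\circ}$, and since both $\psi(Q)$ and $K$ are full sub-quivers of $C^{n-1}$ it suffices to prove that they have the same set of vertices. Before doing so I would check that $\Lambda_{K}:=(\{1,\dots,n-1\},\leq_{K})$ is a genuine poset: by $\mathrm{(i)}_{n-1}$ neither $(0,\dots,0)$ nor $(1,\dots,1)$ lets any coordinate be constant on $K$, and if $T_{i}=T_{j}$ for all $T\in K$ then, choosing by $\mathrm{(i)}_{n-1}$ and $\mathrm{(ii)}_{n-1}$ a path $(1,\dots,1)=S_{0}\to\cdots\to S_{r}=(0,\dots,0)$ in $K$ in which every arrow of $C^{n-1}$ flips a single entry from $1$ to $0$, the coordinates $i$ and $j$ would be forced to flip at the same step, whence $i=j$. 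This antisymmetry is what makes $\psi^{-}(K)$ well defined.

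Next I would compute $\psi(Q)_{0}$ combinatorially. By Corollary~\ref{c}, $\mathrm{lk}_{\mathrm{p}}(P(s))$ is identified with $\{(r_{i})\in L(Q)\mid r_{s}=0\}$, and since $l(Q)\leq 1$ the inequality $r_{i}\leq r_{s}+l_{Q}(s,i)\leq 1$ forces all entries into $\{0,1\}$; the constraints of $L(Q)$ that involve $s$ are automatic (as $s$ is the unique source and $r_{s}=0$). The key observation is that for $i,j\in\{1,\dots,n-1\}$ one has $l_{Q}(i,j)=0$ if and only if $i\leq_{K}j$: a path from $i$ to $j$ in $\tilde{Q}$ with $c^{+}=0$ is a reversed forward path from $j$ to $i$ in $Q$, and such a forward path exists among $\{1,\dots,n-1\}$ precisely when $j\geq_{K}i$, because the arrows of the Hasse quiver $Q^{'}(K)$ run from larger to smaller. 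Substituting this into $r_{j}\leq r_{i}+l_{Q}(i,j)$, the only nonvacuous inequalities are $r_{j}\leq r_{i}$ whenever $i\leq_{K}j$; equivalently $\supp T:=\{i\mid r_{i}=1\}$ is an ideal of $\Lambda_{K}$. Hence $\psi(Q)_{0}=\{T\in\{0,1\}^{n-1}\mid \supp T\in\mathcal{I}(\Lambda_{K})\}$.

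It then remains to identify $K_{0}$ with this set. One inclusion is immediate from the definition of $\leq_{K}$: for every $T\in K$ the set $\supp T$ is a $\leq_{K}$-ideal. The heart of the argument is the converse, namely that every ideal of $\Lambda_{K}$ occurs as $\supp T$ for some $T\in K$. Here I would use $\mathrm{(iii)}_{n-1}$, which says that $K$ is closed under the coordinatewise meet $<\cdot,\cdot>_{+}$ and join $<\cdot,\cdot>_{-}$, i.e. under intersection and union of supports. For fixed $j$ set $T^{*}:=\bigwedge\{T\in K\mid T_{j}=1\}$; this is a finite nonempty meet (it contains $(1,\dots,1)$ by $\mathrm{(i)}_{n-1}$), so $T^{*}\in K$, and $(T^{*})_{i}=1$ exactly when $T_{i}\geq T_{j}$ for all $T\in K$, that is $\supp T^{*}=\{i\mid i\leq_{K}j\}$ is the principal ideal $\downarrow j$. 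Since every ideal is the union of the principal ideals it contains, closure under $<\cdot,\cdot>_{-}$ (together with $(0,\dots,0)\in K$ for the empty ideal) produces its indicator vector in $K$. Thus $\{\supp T\mid T\in K\}=\mathcal{I}(\Lambda_{K})$, and as the support map is injective on $\{0,1\}^{n-1}$ we get $K_{0}=\psi(Q)_{0}$, hence $\psi(Q)=K$.

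I expect the main obstacle to be precisely this last surjectivity onto the order ideals: showing that $K$ contains the indicator of every ideal of $\Lambda_{K}$, not merely that each of its members has ideal support. This is exactly where the lattice hypotheses $\mathrm{(i)}_{n-1}$ and $\mathrm{(iii)}_{n-1}$ are indispensable, since the principal-ideal construction collapses without closure under intersection, whereas $\mathrm{(ii)}_{n-1}$ is used earlier only to guarantee that $\Lambda_{K}$ is an antisymmetric poset on $n-1$ distinct elements so that the vertex count matches.
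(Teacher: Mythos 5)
Your proposal is correct, and its skeleton matches the paper's: $(1)\Rightarrow(2)$ is quoted from Lemma~\ref{sl4'}, and $(2)\Rightarrow(1)$ is reduced to showing $K_{0}=\psi(\psi^{-}(K))_{0}$, with the inclusion $K_{0}\subseteq\psi(\psi^{-}(K))_{0}$ argued exactly as in the paper (supports of elements of $K$ satisfy the $L$-inequalities because $l_{\psi^{-}(K)}(j,i)=0$ forces $i\leq_{K}j$). Where you genuinely diverge is the reverse inclusion, which is the heart of the lemma. The paper proceeds by contradiction: it picks a minimal element $T$ of $\{T\in\psi(\psi^{-}(K))_{0}\setminus K_{0}\mid T^{'}\rightarrow T\ \text{for some}\ T^{'}\in K_{0}\}$, produces auxiliary vertices $T^{'},T^{''},S,T^{'''}$ (using (i)$_{n}$, (ii)$_{n}$, Lemma~\ref{sl4} to get $S$ with $S_{i}>S_{j}$, and minimality to get $T^{'''}=\langle T,T^{''}\rangle_{-}\in K_{0}$), and derives the contradiction $T=\langle\langle T^{'},S\rangle_{-},T^{'''}\rangle_{+}\in K_{0}$. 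You instead give a direct, Birkhoff-style construction: each principal ideal $\downarrow j$ of $\Lambda_{K}$ is realized as the support of $\bigwedge\{T\in K\mid T_{j}=1\}$ (nonempty by (i), in $K$ by iterating the meet in (iii)), and an arbitrary ideal is then the join of the principal ideals it contains (again in $K$ by (iii), with the empty ideal covered by (i)); combined with your identification of $\psi(\psi^{-}(K))_{0}$ with the ideal indicators of $\Lambda_{K}$ --- which is essentially Proposition~\ref{poset} applied to $Q=\psi^{-}(K)$, rederived rather than cited --- this closes the argument. Your route buys several things: it is constructive rather than by minimal counterexample, it avoids invoking Lemma~\ref{sl4} inside the proof, it isolates exactly where each hypothesis enters (condition (ii) only for antisymmetry of $\leq_{K}$, conditions (i) and (iii) for the lattice constructions), and it makes the conceptual content of the lemma transparent, namely that $K$ is the lattice of order ideals of $\Lambda_{K}$. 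The paper's argument, by contrast, works purely inside the quiver $C^{n-1}$ without ever naming the ideal lattice. Two harmless slips: your connecting path should run from $(0,\dots,0)$ to $(1,\dots,1)$ under the paper's orientation (arrows increase coordinates), not the reverse; and the constraint $r_{j}\leq r_{s}+l_{Q}(s,j)$ is not ``automatic'' --- it is precisely what forces $r_{j}\leq 1$ --- though you use it correctly for that purpose.
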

 \begin{proof}
 $((1)\Rightarrow (2))$:This is followed from Lemma\;\ref{sl4'}.

$((2)\Rightarrow (1))$:Let $K\in \mathcal{L}_{n}$. It is sufficient to show $K_{0}=\psi(\psi^{-}(K))_{0}$. 

First let $T\in K_{0}$ and $i,j\in \psi^{-}(K)_{0}$. If $l(j,i)=l_{Q(K)}(j,i)=0$ then $j\leq_{K} i$ and this implies $T_{i}\leq T_{j}$.
If $l(j,i)=1$ then  $T_{i}\leq T_{j}+l(j,i)$. So $T\in \psi(\psi^{-}(K))_{0}$.

Next assume that $\psi(\psi^{-}(K))_{0}\setminus K_{0}\neq \emptyset$ and let $T$ be a 
minimal element of $\{T\in \psi(\psi^{-}(K))_{0}\setminus K_{0}\mid T^{'}\rightarrow T\ \mathrm{for\ some}\ T^{'}\in K_{0}\}$. Let $T^{'}\in K_{0}$ with\; $T^{'}\rightarrow T$, then the conditions (i)$_{n}$ and (ii)$_{n}$ implies that
$\exists T^{''}\in K_{0}$ s.t.\;$T^{'}\rightarrow T^{''}$. Now $\exists i,j$
s.t.\;$0=T_{j}<T_{i}=1,\ T^{'}_{i}=T^{'}_{j}=0$ and $0=T^{''}_{i}<T^{''}_{j}=1$. By Lemma\;\ref{sl4} $\leq_{K}=\leq_{\psi(\psi^{-}(K))}$ and so $T_{i}>T_{j}$ implies
$\exists\;S\in K_{0}$ s.t.\;$S_{i}>S_{j}$. Let $T^{'''}:=<T,T^{''}>_{-}$, then
minimality of $T$ implies $T^{'''}\in K_{0}$. Now
\[(<<T^{'},S>_{-},T^{'''}>_{+})_{a}=\left\{\begin{array}{cc}
\mathrm{min}\{\mathrm{max}\{T_{a},S_{a}\},T_{a}\} & a\neq i,j \\ 
1 & a=i \\ 
0 & a=j.
\end{array}\right. \] 
So $T=<<T^{'},S>_{-},T^{'''}>_{+}\in K_{0}$. This is a contradiction.
  
 \end{proof}
 
 \begin{cor}
 \label{sc1}
 $\psi$ induces a bijection between $\mathcal{S}$ and $\mathcal{L}$.
 \end{cor}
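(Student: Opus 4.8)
The plan is to show that the two maps $\psi\colon \mathcal{S}\to\mathcal{L}$ and $\psi^{-}\colon\mathcal{L}\to\mathcal{S}$ are mutually inverse, which forces each to be a bijection. In fact all the substantive work has already been carried out in Lemmas \ref{sl4'}, \ref{sl4} and \ref{sl5}, so this corollary is purely an assembly of those three facts, and I would present it as such.

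First I would verify that $\psi$ genuinely restricts to a map $\mathcal{S}\to\mathcal{L}$. Since $\mathcal{S}\subseteq\mathcal{A}^{\circ}$, Lemma \ref{sl4'} gives $\psi(Q)\in\mathcal{L}$ for every $Q\in\mathcal{S}$, so the restriction is well defined; dually $\psi^{-}$ lands in $\mathcal{S}$ by its very construction. For injectivity (equivalently $\psi^{-}\circ\psi=\mathrm{id}_{\mathcal{S}}$) I would invoke Lemma \ref{sl4}, whose proof upgrades the statement $Q\sim\psi^{-}(\psi(Q))$ to the equality $Q=\psi^{-}(\psi(Q))$ precisely when $Q\in\mathcal{S}$. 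Thus if $Q,Q'\in\mathcal{S}$ satisfy $\psi(Q)=\psi(Q')$, applying $\psi^{-}$ yields $Q=\psi^{-}(\psi(Q))=\psi^{-}(\psi(Q'))=Q'$.

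Finally, for surjectivity (equivalently $\psi\circ\psi^{-}=\mathrm{id}_{\mathcal{L}}$), I would use the implication $(2)\Rightarrow(1)$ of Lemma \ref{sl5}: its proof shows exactly $K_{0}=\psi(\psi^{-}(K))_{0}$ for every $K\in\mathcal{L}$, i.e. $K=\psi(\psi^{-}(K))$, and since $\psi^{-}(K)\in\mathcal{S}$ this exhibits $K$ as the image under $\psi$ of an element of $\mathcal{S}$. Combining the two composites, $\psi|_{\mathcal{S}}$ is a bijection onto $\mathcal{L}$ with inverse $\psi^{-}$.

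Because the entire argument is bookkeeping of three already-established lemmas, there is no real obstacle left. The only point I would take care with is confirming that the lemmas supply the composites as honest identities on the correct domains: Lemma \ref{sl4} is phrased up to $\sim$ but its proof delivers equality on $\mathcal{S}$, and Lemma \ref{sl5} delivers equality of vertex sets $K_{0}=\psi(\psi^{-}(K))_{0}$, which—since both are full sub-quivers of $C^{n}$—is the same as equality of quivers.
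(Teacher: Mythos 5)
Your proof is correct and matches the paper's intended argument exactly: the corollary is stated without a separate proof precisely because it is the assembly of Lemma~\ref{sl4'} (well-definedness), Lemma~\ref{sl4} ($\psi^{-}\circ\psi=\mathrm{id}$ on $\mathcal{S}$, since its proof gives equality, not just $\sim$), and Lemma~\ref{sl5} ($\psi\circ\psi^{-}=\mathrm{id}$ on $\mathcal{L}$). Your care about upgrading $\sim$ to equality on $\mathcal{S}$ and identifying equality of vertex sets with equality of full sub-quivers of $C^{n}$ is exactly the right bookkeeping.
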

 Now, by applying Theorem\;\ref{t}, we get the following result. 
 \begin{thm}
 \label{st1}
$(1)$ For any $Q\in \mathcal{A}^{\circ}$, there exists $K\in \mathcal{L}$ such that $\overrightarrow{\mathcal{T}}_{\mathrm{p}}(Q)=\overrightarrow{\amalg}K$.\\
$(2)$ For any $K\in \mathcal{L}$, there exists $Q\in \mathcal{A}^{\circ}$ such that $\overrightarrow{\mathcal{T}}_{\mathrm{p}}(Q)=\overrightarrow{\amalg}K$. 
 \end{thm}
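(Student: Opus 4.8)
The plan is to take $K:=\psi(Q)=\overrightarrow{\mathrm{lk}}_{\mathrm{p}}(P(s))$ for part $(1)$ and to invert $\psi$ for part $(2)$, so that the whole statement becomes a repackaging of Theorem \ref{t} together with Proposition \ref{sp}. For $(1)$ I would first note $K\in\mathcal{L}$ by Lemma \ref{sl4'}. Theorem \ref{t}\,$(1)$ gives the level decomposition of $\mathcal{T}_{\mathrm{p}}(Q)$ into the pieces $\mathrm{lk}_{\mathrm{p}}(\tau^{-r}P(s))$ for $r\geq 0$, and Theorem \ref{t}\,$(2)$ identifies each piece with $K$ via $\tau^{-r}$; so the only thing left is to describe the arrows running between consecutive levels and to match them with the gluing performed by $\overrightarrow{\amalg}$.

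The next step is to pin down these cross-level arrows. Because $s$ is the \emph{unique} source, every vertex is reached from $s$ by a directed path, whose reversal is a backward path in $\tilde{Q}$; hence $l_{Q}(j,s)=0$ for all $j\in Q_{0}$. Corollary \ref{c} then forces $r_{s}\leq r_{j}$ for every summand $\tau^{-r_{j}}P(j)$ of a pre-projective tilting module, and since $l(Q)\leq 1$ this pins the coordinates to $r_{j}\in\{r,r+1\}$ on the level $r_{s}=r$. By Theorem \ref{t}\,$(5)$ a Hasse arrow crossing levels is exactly one that raises $r_{s}$ by $1$; by the previous inequality such an arrow can start only at the vertex with all $r_{j}=r+1$ (the sink of the level-$r$ copy of $K$) and must land at the all-$(r{+}1)$ vertex (the source of the level-$(r{+}1)$ copy). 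Thus consecutive levels are joined by a single sink-to-source arrow.

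It then remains to recognise this infinite sink-to-source stacking as $\overrightarrow{\amalg}K$. Iterating Proposition \ref{sp} gives $\psi(\phi(Q,\phi(Q,\dots,\phi(Q,Q))))=K\,\overrightarrow{\amalg}\cdots\overrightarrow{\amalg}\,K$, and unwinding the proof of that proposition shows that each application of $\overrightarrow{\amalg}$ contributes precisely one arrow from the sink of the left factor to the source of the right factor; these are exactly the connecting arrows found above, so the first $m$ levels of $\overrightarrow{\mathcal{T}}_{\mathrm{p}}(Q)$ coincide with the $m$-fold iterate. Passing to the limit in $m$ identifies $\overrightarrow{\mathcal{T}}_{\mathrm{p}}(Q)$ with $\overrightarrow{\amalg}K$, which proves $(1)$. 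Part $(2)$ is then formal: given $K\in\mathcal{L}$, Lemma \ref{sl5}\,$((2)\Rightarrow(1))$ (equivalently Corollary \ref{sc1}) lets me write $K=\psi(Q)$ for $Q:=\psi^{-}(K)\in\mathcal{S}\subseteq\mathcal{A}^{\circ}$, and invoking $(1)$ yields $\overrightarrow{\mathcal{T}}_{\mathrm{p}}(Q)=\overrightarrow{\amalg}\psi(Q)=\overrightarrow{\amalg}K$.

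The main obstacle is the matching in the third step. The within-level isomorphism with $K$ is handed to us by Theorem \ref{t}\,$(2)$, but checking that the single inter-level arrow joins exactly the sink of one copy to the source of the next, that no further cross-level arrows occur, and that the finite $\overrightarrow{\amalg}$-iterates assemble into the infinite quiver without spurious coincidences, all rest on the source constraint $l_{Q}(j,s)=0$ and on a careful comparison of the connecting arrows produced by $\overrightarrow{\amalg}$ in Proposition \ref{sp} with those forced by Corollary \ref{c}.
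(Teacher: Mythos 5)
Your proposal is correct and takes essentially the same route the paper intends: the paper's own ``proof'' is only the remark that the theorem follows by applying Theorem \ref{t} (together with Lemma \ref{sl4'}, Lemma \ref{sl5} and Corollary \ref{sc1}), and your argument supplies exactly those details --- the level decomposition with each level $\overrightarrow{\mathrm{lk}}_{\mathrm{p}}(\tau^{-r}P(s))\simeq K\in\mathcal{L}$, the determination via $l_{Q}(j,s)=0$ and $l(Q)\leq 1$ that the only cross-level arrow runs from the minimal element (sink) of level $r$ to the maximal element (source) of level $r+1$, and the inversion of $\psi$ by $\psi^{-}$ for part $(2)$. One remark: your reading of $\overrightarrow{\amalg}$ (a single arrow from the sink of the left factor to the source of the right factor) is the orientation forced by the proof of Proposition \ref{sp} and by the requirement that $\overrightarrow{\amalg}$ map $\mathcal{B}\times\mathcal{B}$ to $\mathcal{B}$; the paper's literal definition states the opposite orientation, which is evidently a typo, so resolving it as you did is the right choice.
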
 
\begin{cor}
Let $Q(1),Q(2)\in \mathcal{A}^{\circ}$. Then the followings are equivalent,
$(1)$\;$\overrightarrow{\mathcal{T}}_{\mathrm{p}}(Q(1))=\overrightarrow{\mathcal{T}}_{\mathrm{p}}(Q(2))$,\\
$(2)$\;$\exists Q\in \mathcal{A}^{\circ}$\;s.t.\;$Q(1)\sim (Q\overrightarrow{\amalg}
Q\overrightarrow{\amalg}\cdots \overrightarrow{\amalg}Q)^{\circ}$ and $Q(2)\sim (Q\overrightarrow{\amalg}
Q\overrightarrow{\amalg}\cdots \overrightarrow{\amalg}Q)^{\circ}$.
\end{cor}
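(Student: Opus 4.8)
The plan is to reduce the equality of the infinite quivers $\overrightarrow{\mathcal{T}}_{\mathrm{p}}(Q(j))$ to a statement about their period blocks $\psi(Q(j))$ and then extract a common period. By Theorem~\ref{st1} one has $\overrightarrow{\mathcal{T}}_{\mathrm{p}}(Q(j))=\overrightarrow{\amalg}\,\psi(Q(j))$, the one-sided infinite self-gluing of the single block $K_j:=\psi(Q(j))\in\mathcal{L}$. I would record two consequences of the machinery already built: iterating Proposition~\ref{sp} and using Lemma~\ref{sl1} yields $\psi\big((Q\overrightarrow{\amalg}\cdots\overrightarrow{\amalg}Q)^{\circ}\big)=\psi(Q)\overrightarrow{\amalg}\cdots\overrightarrow{\amalg}\psi(Q)$ (the $m$-fold gluings), and by Lemma~\ref{sl4} together with Corollary~\ref{sc1} the map $\psi/\!\sim$ is a bijection $\mathcal{A}^{\circ}/\!\sim\,\to\mathcal{L}$ with inverse induced by $\psi^{-}$. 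Hence $Q(j)\sim(Q\overrightarrow{\amalg}\cdots\overrightarrow{\amalg}Q)^{\circ}$ is equivalent to $K_j=\psi(Q)\overrightarrow{\amalg}\cdots\overrightarrow{\amalg}\psi(Q)$, and the corollary is reduced to the assertion that $\overrightarrow{\amalg}K_1=\overrightarrow{\amalg}K_2$ if and only if $K_1,K_2$ are finite self-gluings of one common block $K\in\mathcal{L}$.

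For $(2)\Rightarrow(1)$ I would only note that regrouping countably many copies does not change the gluing, so $\overrightarrow{\amalg}(K\overrightarrow{\amalg}\cdots\overrightarrow{\amalg}K)=\overrightarrow{\amalg}K$; thus $K_j=\psi(Q)\overrightarrow{\amalg}\cdots\overrightarrow{\amalg}\psi(Q)$ forces $\overrightarrow{\mathcal{T}}_{\mathrm{p}}(Q(1))=\overrightarrow{\amalg}\,\psi(Q)=\overrightarrow{\mathcal{T}}_{\mathrm{p}}(Q(2))$. For $(1)\Rightarrow(2)$, once a common block $K\in\mathcal{L}$ with $K_j=K\overrightarrow{\amalg}\cdots\overrightarrow{\amalg}K$ (an $m_j$-fold gluing) has been produced, I would set $Q:=\psi^{-}(K)\in\mathcal{S}$, compute $\psi\big((Q\overrightarrow{\amalg}\cdots\overrightarrow{\amalg}Q)^{\circ}\big)=K\overrightarrow{\amalg}\cdots\overrightarrow{\amalg}K=K_j=\psi(Q(j))$, and invoke injectivity of $\psi/\!\sim$ to conclude $Q(j)\sim(Q\overrightarrow{\amalg}\cdots\overrightarrow{\amalg}Q)^{\circ}$.

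The hard part is the common-block extraction, and my approach is to put an intrinsic grading on $\Gamma:=\overrightarrow{\amalg}K_1=\overrightarrow{\amalg}K_2$. By Corollary~\ref{c}, $\Gamma$ is the Hasse quiver of $(L(Q(j)),\leq^{op})$, and every cover changes $\sum_i r_i$ by exactly one; therefore $\rho(v):=\sum_i r_i(v)$, the directed distance in $\Gamma$ from its unique source $A=kQ(j)$, depends only on the abstract quiver $\Gamma$. Writing $n_j:=\#Q(j)_0$, the shift $\tau^{-1}$ adds $1$ to every exponent and so raises $\rho$ by $n_j$; since $s$ is the unique source one has $l(s,i)=1$ for all $i\neq s$, which forces $r_i\in\{r_s,r_s+1\}$ on each level and makes $\mathrm{lk}_{\mathrm{p}}(\tau^{-r}P(s))$ occupy precisely the rank interval $rn_j\le\rho<(r+1)n_j$, with $\tau^{-1}$ an isomorphism of $\Gamma$ onto the tail $\{\,\rho\ge n_j\,\}$. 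Consequently the sequence of rank-slices of $\Gamma$ is purely periodic of period $n_1$ and also of period $n_2$, so by the one-sided Fine--Wilf periodicity theorem it has period $d:=\gcd(n_1,n_2)$; the induced degree-$d$ shift exhibits $\Gamma=\overrightarrow{\amalg}K$ for $K$ the full subquiver on $0\le\rho<d$, and the rank intervals express each $K_j$ as the $(n_j/d)$-fold self-gluing of $K$. I expect the main obstacle to be exactly this step: confirming that the rank-slice decomposition is $\overrightarrow{\amalg}$-compatible, so that Fine--Wilf is applied to honest letters carrying the bridge between consecutive slices, and that the finest block $K$ so obtained still lies in $\mathcal{L}$ -- which I would verify by checking that conditions $(\mathrm{i})_d$--$(\mathrm{iii})_d$ defining $\mathcal{L}_d$ are inherited from $K_1\in\mathcal{L}$.
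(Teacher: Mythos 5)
Your overall strategy is sound, and the outer layers of your argument (the reduction via Theorem~\ref{st1}, the regrouping for $(2)\Rightarrow(1)$, and the final assembly through Proposition~\ref{sp1} resp.\ Lemma~\ref{sl5} plus injectivity of $\psi/\!\sim$ from Lemma~\ref{sl4}) agree with the paper; your preparatory claims about the rank function are also correct ($\rho$ is the distance from the unique source, each $\mathrm{lk}_{\mathrm{p}}(\tau^{-r}P(s))$ fills the rank interval $[rn_{j},(r+1)n_{j})$, and $\tau^{-1}$ is an isomorphism onto the tail). But for the core implication $(1)\Rightarrow(2)$ you take a genuinely different route. The paper never grades by rank: it decomposes each period block $\psi(Q(i))=S^{1}(i)\overrightarrow{\amalg}\cdots\overrightarrow{\amalg}S^{r_{i}}(i)$ with $r_{i}$ \emph{maximal}, so the common infinite quiver becomes an honest word in indecomposable $\mathcal{B}$-letters joined by single bridge arrows, purely periodic with periods $r_{1}$ and $r_{2}$ (block counts, not vertex counts); the homomorphism $f:\Z\to\Z/r_{1}\Z\oplus\Z/r_{2}\Z$, whose image is $\{(a,b)\mid a\equiv b \bmod \gcd(r_{1},r_{2})\}$, then forces $S^{a}(1)=S^{b}(2)$ whenever $a\equiv b\bmod r$, hence periodicity in period $r=\gcd(r_{1},r_{2})$, after which Proposition~\ref{sp1} supplies $Q$ with $\psi(Q)=S^{1}(1)\overrightarrow{\amalg}\cdots\overrightarrow{\amalg}S^{r}(1)$ and Lemma~\ref{sl4} finishes. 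So where you take the $\gcd$ of the vertex counts $n_{1},n_{2}$ and cut by rank, the paper takes the $\gcd$ of the block counts $r_{1},r_{2}$ and cuts only at the intrinsic bridges.

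The payoff of the paper's choice is exactly at the point you flag as your main obstacle, and you should be aware that Fine--Wilf cannot be cited there as a black box. A graded quiver is not a word: periodicity of $\Gamma$ means existence of shift isomorphisms $\sigma_{i}:\Gamma\simeq\Gamma_{\geq n_{i}}$, and from these you must (a) manufacture a degree-$d$ shift by Euclidean composition of $\sigma_{1}^{\pm 1},\sigma_{2}^{\pm 1}$ (e.g.\ $\sigma_{2}^{-1}\sigma_{1}$ is an isomorphism $\Gamma\simeq\Gamma_{\geq n_{1}-n_{2}}$ when $n_{1}\geq n_{2}$), and (b) verify that every rank $md$ is a genuine cut rank --- a single vertex at ranks $md-1$ and $md$ with a single arrow between them --- before the expression $\Gamma=\overrightarrow{\amalg}K$, $K$ the rank-$[0,d)$ slice, makes sense; otherwise the cut could pass through a ``wide'' layer of a block and the slices would not be glued by $\overrightarrow{\amalg}$ at all. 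Both points can be settled (cut ranks are intrinsic, contain $n_{1}\Z_{>0}\cup n_{2}\Z_{>0}$, and are stable under the shifts, so a Euclid/water-jug argument places all multiples of $d$ among them, which simultaneously yields (a) and (b) and shows $K\in\mathcal{B}$), but this is the real content of your proof, not a citation. By contrast, in the paper's decomposition each bridge is forced (one arrow from the unique sink of a block to the unique source of the next), so the quiver is faithfully encoded by its letter sequence and periodicity of the word literally is periodicity of the quiver. Note finally that both proofs share one tacit step of the same nature: the paper implicitly uses that the two maximal decompositions of $\Gamma$ align block by block, which is justified by the same observation that cut positions are intrinsic.
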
  

\begin{proof}
$((2)\Rightarrow (1))$:This is followed by proposition\;\ref{sp1}, Lemma\;\ref{sl1} and Lemma\;\ref{sl2}.

$((1)\Rightarrow(2))$:Let 
\[\psi(Q(i))=S^{1}(i)\overrightarrow{\amalg}S^{2}(i)\overrightarrow{\amalg}\cdots \overrightarrow{\amalg} S^{r_{i}}(i)\ (S^{t}(i)\in \mathcal{B})\]
be a decomposition with $r_{i}$ being maximal\;$(i=1,2)$ and $r:=\mathrm{gcd}(r_{1},r_{2})$.

Consider a homomorphism $f:\mathbb{Z}\rightarrow \mathbb{Z}/r_{1}\mathbb{Z}\oplus \mathbb{Z}/r_{2}\mathbb{Z}$ where $f(t)=(t\mod r_{1},t\mod r_{2})$.
Let $1\leq a \leq r_{1}$ and $1\leq b \leq r_{2}$.
Then the condition\;$(1)$ implies 
\[a\equiv b\;\mathrm{mod}\; r \Rightarrow (a\;\mathrm{mod}\;r_{1},b\;\mathrm{mod}\; r_{2})\in \mathrm{Im\;}f \Rightarrow S^{a}(1)=S^{b}(2).\] So $S^{x+tr}(1)=S^{x}(2)=S^{x}(1)$ and $S^{x+tr}(2)=S^{x}(1)=S^{x}(2)(x\leq t).$ In particular we get,
\[\psi(Q(1))=S\overrightarrow{\amalg}S\cdots \overrightarrow{\amalg}S,\; \psi(Q(2))=S\overrightarrow{\amalg}S\cdots \overrightarrow{\amalg}S,\]
where $S=S^{1}(1)\overrightarrow{\amalg}S^{2}(1)\overrightarrow{\amalg}\cdots \overrightarrow{\amalg} S^{r}(1)$. By proposition\;\ref{sp1}, we can chose $Q$ satisfying
$\psi(Q)=S$. Now Lemma\;\ref{sl4} shows  $Q$ satisfies the condition\;$(2)$.  
\end{proof}

\section*{Acknowledgement}
The author would like to express his gratitude to Professor Susumu Ariki for his mathematical supports and warm encouragements.

\end{document}